\documentclass[11pt]{article}%
\usepackage{amsfonts}
\usepackage{amsmath}
\usepackage{amssymb}
\usepackage{graphicx}
\usepackage{bbold}
\usepackage[colorlinks]{hyperref}
\setcounter{MaxMatrixCols}{30}
\providecommand{\U}[1]{\protect\rule{.1in}{.1in}}
\hypersetup{colorlinks=true, linkcolor=blue, citecolor=green, filecolor=black, urlcolor=blue}
\newtheorem{theorem}{Theorem}

\newtheorem{corollary}[theorem]{Corollary}

\newtheorem{definition}[theorem]{Definition}

\newtheorem{lemma}[theorem]{Lemma}

\newtheorem{proposition}[theorem]{Proposition}
\newtheorem{remark}[theorem]{Remark}

\setlength{\textwidth}{155mm} \setlength{\textheight}{217mm}
\setlength{\topmargin}{0mm} \setlength{\oddsidemargin}{2.5mm}
\setlength{\evensidemargin}{-2.5mm}
\newenvironment{proof}[1][Proof]{\noindent\textbf{#1.} }{\ \rule{0.5em}{0.5em}}

\begin{document}

\title{Deterministic and Stochastic Differential Equations in Hilbert Spaces
Involving Multivalued Maximal Monotone Operators\footnote{This is an electronic reprint of the original article published by the Panamer. Math. J. 6 (1996), no. 3, 83--119, \href{http://www.ams.org/mathscinet-getitem?mr=1400370}{MR1400370}. This reprint differs from the original in pagination and typographic detail.}}
\author{Aurel R\u{a}\c{s}canu$\bigskip$\\{\scriptsize Faculty of Mathematics, \textquotedblleft Alexandru Ioan
Cuza\textquotedblright\ University, }\\{\scriptsize Carol 1 Blvd., no. 11, Ia\c{s}i, Romania}\\{\scriptsize e-mail: aurel.rascanu@uaic.ro}}
\date{}
\maketitle

\begin{abstract}
This work deals with a Skorokhod problem driven by a maximal operator:%
\[
\left\{
\begin{tabular}
[c]{l}%
$du\left(  t\right)  +Au\left(  t\right)  (dt)\ni f\left(  t\right)
dt+dM\left(  t\right)  ,$\ $0<t<T,$\medskip\\
$u\left(  0\right)  =u_{0}\,,$%
\end{tabular}
\ \ \ \right.
\]
that is a multivalued deterministic differential equation with a singular
inputs $dM\left(  t\right)  ,$ where $t\rightarrow$ $M\left(  t\right)  $ is a
continuous function. The existence and uniqueness result is used to study an
It\^{o}'s stochastic differential equation
\[
\left\{
\begin{tabular}
[c]{l}%
$du(t)+Au(t)\left(  dt\right)  \ni f(t,u(t))dt+B(t,u(t))dW(t),$\ $0<t<T,$%
\medskip\\
$u(0)=u_{0},$%
\end{tabular}
\ \right.
\]
in a real Hilbert space $H$, where $A$ is a multivalued ($\alpha$-)maximal
monotone operator on $H,$ and $f(t,u)$ and $B(t,u)$ are Lipschitz continuous
with respect to $u$. Some asymptotic properties in the stochastic case are
also found.

\end{abstract}

\textbf{AMS Classification subjects: }60H10, 60H15, 47N20, 47N30.\medskip

\textbf{Keywords: }Skorokhod problem; Multivalued stochastic differential equations; Maximal monotone operators; Large time behaviour.\bigskip

{\small \textbf{CONTENT}}\smallskip

{\small \quad1.\quad Introduction }

{\small \quad2.\quad Deterministic evolution equations }

{\small \quad\quad\quad\quad2.1. Preliminaries }

{\small \quad\quad\quad\quad2.2. Generalized deterministic solutions }

{\small \quad\quad\quad\quad2.3. An example }

{\small \quad3.\quad Stochastic evolution equations }

{\small \quad\quad\quad\quad3.1. Preliminaries }

{\small \quad\quad\quad\quad3.2. $\alpha$-Monotone SDE with additive noise }

{\small \quad\quad\quad\quad3.3. Monotone SDE with state depending diffusion }

{\small \quad4.\quad Large time behaviour }

{\small \quad\quad\quad\quad4.1.Exponentially stability }

{\small \quad\quad\quad\quad4.2. Invariant measure }

\section{Introduction}

Generally, the stochastic model for parabolic evolution systems with
unilateral constraints (obstacle problem, one phase Stefan problem, Signorini
problem) is an infinite dimensional stochastic differential equation of the
form%
\begin{equation}
\left\{
\begin{tabular}
[c]{l}%
$du(t)+Au(t)\left(  dt\right)  \ni f(t,u(t))dt+B(t,u(t))dW(t),$\ $0<t<T,$%
\medskip\\
$u(0)=u_{0},$%
\end{tabular}
\right.  \label{1.1}%
\end{equation}
where $A$ is a maximal monotone operator in a Hilbert spaces $H$ and $f(t,u)$
and $B(t,u)$ defined for $(t,u)\in\left[  0,T\right]  \times H$ are Lipschitz
continuous with respect to $u$, and $\left\{  W(t)\right\}  _{t\geq0}$ is a
Wiener process with respect to a stochastic basis $(\Omega,\mathcal{F}%
,\mathbb{P},\left\{  \mathcal{F}_{t}\right\}  _{t\geq0})$.

For finite dimensional case we mention the works of P.L. Lions \& Sznitman
\cite{LSSD}, Y. Saisho \cite{SaSD} and the generalized result of E. Cepa
\cite{CeED}. The main ideas is to consider a generalized Skorohod problem%
\begin{equation}
\left\{
\begin{tabular}
[c]{l}%
$du(t)+Au(t)\left(  dt\right)  \ni f(t)dt+dM(t),$\ $0<t<T,$\medskip\\
$u(0)=u_{0},$%
\end{tabular}
\right.  \label{1.2}%
\end{equation}
and by continuity of Skorohod mapping%
\[
(u_{0},f,M)\rightarrow u=S(u_{0},f,M)
\]
we obtain the existence and the uniqueness of the solution of equation
(\ref{1.1}). If in finite dimensional case one assume that $intD(A)\neq
\emptyset$ and this assumption is essentially for the proof, in infinite
dimensional case this assumption is too restrictive; it is not satisfied, not
even for the obstacle problem. For this reason the step from finite to
infinite dimensional case is not so directly.

V. Barbu and A. R\u{a}\c{s}canu studied in \cite{BaRaPV} parabolic variational
inequalities in the determinist case, that is equations of the form
(\ref{1.2}) where $A=A_{0}+\partial\varphi$. Some stochastic parabolic
variational inequalities of the form (\ref{1.1}) with $A=A_{0}+\partial
\varphi$ are considered by A. Bensoussan and A. R\u{a}\c{s}canu in \cite{BRSV}
and \cite{BeRaPV}. In this paper using the idea of looking the solution as
image by a Skorohod mapping we prove the existence and the uniqueness of the
solution of (\ref{1.1}).

The paper is organized as follows. Some preliminaries determinist results with
a generalization of the solution for singular inputs are given in Section 2.
Section 3 contains the main existence result on stochastic equation
(\ref{1.1}). Finally in Section 4 we give some asymptotic properties of the solution.

\section{Deterministic evolution equations}

\subsection{2.1 Preliminaries}

A. Throughout in this work $H$ is a real separable Hilbert space with the norm
$\left\vert \cdot\right\vert $ and the scalar product $\left(  \cdot
,\cdot\right)  $, and $\left(  X,\left\Vert \cdot\right\Vert _{X}\right)  $ is
a real separable Banach space with separable dual $\left(  X^{\ast},\left\Vert
\cdot\right\Vert _{X^{\ast}}\right)  $. It is assumed that%
\[
X\subset H\cong H^{\ast}\subset X^{\ast},
\]
where the embedding are continuous with dense range. The duality paring
$\left(  X^{\ast},X\right)  $ is denoted also $\left(  \cdot,\cdot\right)  $.
Let $\gamma_{0}>0$ a constant of boundedness: $\left\vert \cdot\right\vert
_{H}\leq\gamma_{0}\left\Vert \cdot\right\Vert _{X}\;.$

B. If $\left[  a,b\right]  $ is a real closed interval and $Y$ is a Banach
space then $\;L^{r}\left(  a,b;Y\right)  ,$ $C\left(  \left[  a,b\right]
;Y\right)  $, $\;BV\left(  \left[  a,b\right]  ;Y\right)  ,$ $AC\left(
\left[  a,b\right]  ;Y\right)  ,$ are the usual spaces of $p$-integrable,
continuos,with bounded variation, and absolutely continuous $Y$-valued
function on $\left[  a,b\right]  $, respectively.$\;$By $W^{1,p}([a,b];Y)$ we
shall denote the space of $y\in L^{p}(a,b;Y)$ such that $y^{\prime}\in
L^{p}(a,b;Y),$ where $y^{\prime}$ is the derivative in the sense of
distributions. Equivalently (see e.g. \cite{BaNS}, pag. 19, or \cite{BrOM}):
$W^{1,p}([a,b];Y)=\{y\in AC([a,b];Y):\frac{dy}{dt}\in L^{p}(a,b;Y);$%
\ $y\left(  t\right)  =y(a)+\int_{a}^{t}\frac{dy}{dt}(s)ds,$ $\forall
t\in\lbrack a,b]\}.$ The space $W^{2,p}([a,b];X)$ is similarly defined

C. A multivalued operator $A:H\rightarrow2^{H}$ will be seen also as a subset
of $H\times H$ setting for $A\subset H\times H$:
\[
Ax=\{y\in H:\,[x,y]\in A\}\quad\text{and}\quad D(A)=\left\{  x\in
H:\,Ax\neq\emptyset\right\}  .
\]
The operator A is a maximal monotone operator if $A$ is monotone i.e.%
\[
\left(  y_{1}-y_{2},x_{1}-x_{2}\right)  \geq0,\text{ for all }\,\left[
x_{1},y_{1}\right]  \in A,\,\left[  x_{2},y_{2}\right]  \in A
\]
and it is\ maximal in the set of monotone operators: that is,
\[
\left(  v-y,u-x\right)  \geq0\;\;\;\forall\,\left[  x,y\right]  \in
A,\;\;\;\;\Rightarrow\;\left[  u,v\right]  \in A.
\]
Let $\varepsilon>0$ The following operators%
\[
J_{\varepsilon}x=(I+\varepsilon A)^{-1}(x)\text{ and }A_{\varepsilon}=\frac
{1}{\varepsilon}(x-J_{\varepsilon}x),
\]
are single-valued and they satisfy (see \cite{BaNS} and \cite{BrOM} ) the
properties for all $\varepsilon,\delta>0,\;\;x,y\in H:$
\begin{equation}%
\begin{tabular}
[c]{l}%
$a)\quad\left[  J_{\varepsilon}x,A_{\varepsilon}x\right]  \in A,$\medskip\\
$b)\quad\left\vert J_{\varepsilon}x-J_{\varepsilon}y\right\vert \leq\left\vert
x-y\right\vert ,$\medskip\\
$c)\quad\left\vert A_{\varepsilon}x-A_{\varepsilon}y\right\vert \leq\dfrac
{1}{\varepsilon}\left\vert x-y\right\vert ,$\medskip\\
$d)\quad\left\vert J_{\varepsilon}x-J_{\delta}x\right\vert \leq\left\vert
\varepsilon-\delta\right\vert \left\vert A_{\delta}x\right\vert ,$\medskip\\
$e)\quad\left\vert J_{\varepsilon}x\right\vert \leq\left\vert x\right\vert
+(1+\left\vert \varepsilon-1\right\vert )\left\vert J_{1}0\right\vert
,$\medskip\\
$f)\quad A_{\varepsilon}:H\rightarrow H\quad$is a maximal monotone operator.
\end{tabular}
\label{2.1}%
\end{equation}
Also%
\begin{equation}%
\begin{tabular}
[c]{l}%
$a)\quad\overline{D\left(  A\right)  \text{ }}$ is a convex set and
$\lim\limits_{\varepsilon\searrow0}J_{\varepsilon}x=\Pr_{\overline{D\left(
A\right)  }}x,\;\forall x\in H,$\medskip\\
$b)\quad\forall\,\left[  x,y\right]  \in A:\;\;Ax$ is a closed convex set
,\medskip\\
\quad\quad\quad\quad$\lim\limits_{\varepsilon\searrow0}A_{\varepsilon}%
x=\Pr_{Ax}\left\{  0\right\}  $ $\overset{def}{=}\;A^{0}x$ and$\;\left\vert
A_{\varepsilon}x\right\vert \leq\left\vert y\right\vert \;$\quad\medskip\\
$c)\quad$if $\left[  x_{n},y_{n}\right]  \in A\;$and\medskip\\
\quad\quad\quad\quad$x_{n}\rightarrow x$ \ (strongly) in $H,\;y_{n}%
\;\overset{w}{\rightarrow}y\;$(weakly) in $H,\;$or\medskip\\
\quad\quad\quad\quad$x_{n}\overset{w}{\rightarrow}x,\quad and\quad\quad
y_{n}\rightarrow y,\;\;$or\quad\medskip\\
\quad\quad\quad\quad$x_{n}\overset{w}{\rightarrow}x,\quad\;y_{n}%
\overset{w}{\rightarrow}y$ $,\quad\varlimsup_{n}\left(  x_{n},y_{n}\right)
\leq\left(  x,y\right)  $\quad\medskip\\
\quad\quad then $\left[  x,y\right]  \in A,$\medskip\\
$d)\quad$if $\varepsilon_{n}\rightarrow0,\;x_{n}\rightarrow x$
$,\;A_{\varepsilon_{n}}y_{n}\overset{w}{\rightarrow}y$ \ then $\left[
x,y\right]  \in A$%
\end{tabular}
\ \ \ \label{2.2}%
\end{equation}

Let $\alpha\in R$ be given. The operator$\;A:H\rightarrow2^{H}$ is called
$\alpha-maximal\;monotone$ operator if $A+\alpha I$ is the maximal monotone
operator ($I$ is the identity operator on $H$).

For $A:H\rightarrow2^{H}\;$an $\alpha$-maximal monotone operator , $u_{0}%
\in\overline{D\left(  A\right)  }$, $f\in L^{1}\left(  0,T;H\right)  ,$ the
strong solution of the Cauchy problem
\begin{equation}
\frac{du}{dt}+Au\ni f_{0}\left(  t\right)  ,\quad a.e.\;t\in\left(
0,T\right)  ,\;\;\;\quad\!\!u\left(  0\right)  =u_{0} \label{2.3}%
\end{equation}
is defined$\,$as a function $u\in W^{1,1}\left(  \left[  0,T\right]
;H\right)  \;$satisfying $u\left(  0\right)  =u_{0,\;}u\left(  t\right)  \in
D\left(  A\right)  \;$a.e. $t\in\left(  0,T\right)  ,$ and there exists $h\in
L^{1}\left(  0,T\right)  $, such that $\,h\left(  t\right)  \in Au\left(
t\right)  $ a.e. $t\in\left(  0,T\right)  $ and \ $du/dt+h\left(  t\right)
=f\left(  t\right)  ,\;a.e.\;t\in\left(  0,T\right)  .$ Such a solution is
noted $u=S\left(  u_{0},f\right)  $ and we remark that the strong solution is
unique when this exists.

We recall from \cite{BaOC}, p.31, that \ the following proposition holds :

\begin{proposition}
\label{p2.1} If $A$\ is $\alpha$-maximal monotone operator (\thinspace
$\alpha\in R\,$) on \thinspace$H,$\ $u_{0}\in D\left(  A\right)  $\ and
$f_{0}\in W^{1,1}\left(  \left[  0,T\right]  ;H\right)  $\ then the Cauchy
problem (\ref{2.3}) has a unique strong solution $u\in W^{1,\infty}\left(
\left[  0,T\right]  ;H\right)  $.\ Moreover if $A_{\varepsilon}^{\alpha}\;$is
the Yosida approximation of the operator $A+\alpha I\;$and $u_{\varepsilon}%
$\ is the solution of the approximate equation
\[
\frac{du_{\varepsilon}}{dt}+A_{\varepsilon}^{\alpha}u_{\varepsilon}-\alpha
u_{\varepsilon}=f,\quad u_{\varepsilon}\left(  0\right)  =u_{0}%
\]
then for all $\left[  x_{0},y_{0}\right]  \in A$ there exists a constant
$C=C\left(  \alpha,T,x_{0},y_{0}\right)  >0$\ such that

\noindent c$_{1}$)$\quad\left\Vert u_{\varepsilon}\right\Vert _{C\left(
\left[  0,T\right]  ;H\right)  }^{2}\leq C\left(  1+\left\vert u_{0}%
\right\vert ^{2}+\left\Vert f\right\Vert _{L^{1}(0,T;H)}^{2}\right)  $
and$\smallskip$

\noindent c$_{2}$)$\quad\lim\limits_{\varepsilon\searrow0}u_{\varepsilon}%
=u$\ in $C\left(  \left[  0,T\right]  ;H\right)  .$
\end{proposition}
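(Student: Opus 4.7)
The plan is to construct $u$ as a limit of Yosida regularizations and read off the estimate $c_{1})$ and the convergence $c_{2})$ along the way. Set $B:=A+\alpha I$, which is maximal monotone, and write $A_{\varepsilon}^{\alpha}=B_{\varepsilon}=\frac{1}{\varepsilon}(I-J_{\varepsilon}^{B})$; then $B_{\varepsilon}$ is monotone and $(1/\varepsilon)$-Lipschitz on $H$, so the approximate equation
\[
u_{\varepsilon}^{\prime}(t)=f(t)-B_{\varepsilon}u_{\varepsilon}(t)+\alpha u_{\varepsilon}(t),\qquad u_{\varepsilon}(0)=u_{0},
\]
has a unique solution $u_{\varepsilon}\in C^{1}([0,T];H)$ by Cauchy--Lipschitz. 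Fix $[x_{0},y_{0}]\in A$, so that $y_{0}+\alpha x_{0}\in Bx_{0}$ and $|B_{\varepsilon}x_{0}|\leq|y_{0}+\alpha x_{0}|$ by (\ref{2.2})(b). Pairing the equation with $u_{\varepsilon}-x_{0}$, using the monotonicity $(B_{\varepsilon}u_{\varepsilon}-B_{\varepsilon}x_{0},u_{\varepsilon}-x_{0})\geq 0$, Cauchy--Schwarz and Gronwall already yields the estimate $c_{1})$.

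The critical step is a uniform bound on $\|u_{\varepsilon}^{\prime}\|_{L^{\infty}(0,T;H)}$, which is exactly where the assumption $f\in W^{1,1}$ enters. Write the equation at $t$ and at $t+h$, subtract, pair with $u_{\varepsilon}(t+h)-u_{\varepsilon}(t)$, and discard the nonnegative monotonicity term to obtain
\[
\tfrac{1}{2}\tfrac{d}{dt}|u_{\varepsilon}(t+h)-u_{\varepsilon}(t)|^{2}\leq\alpha|u_{\varepsilon}(t+h)-u_{\varepsilon}(t)|^{2}+|f(t+h)-f(t)|\,|u_{\varepsilon}(t+h)-u_{\varepsilon}(t)|.
\]
Dividing by $h^{2}$ and passing to the limit (equivalently, applying Gronwall to the difference quotient) gives $|u_{\varepsilon}^{\prime}(t)|\leq e^{|\alpha|T}\bigl(|u_{\varepsilon}^{\prime}(0)|+\|f^{\prime}\|_{L^{1}(0,T;H)}\bigr)$. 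Evaluating the equation at $t=0$ together with $u_{0}\in D(A)$ and $|B_{\varepsilon}u_{0}|\leq|(A+\alpha I)^{0}u_{0}|$ shows that $|u_{\varepsilon}^{\prime}(0)|$ is bounded independently of $\varepsilon$; hence $\|u_{\varepsilon}^{\prime}\|_{L^{\infty}}$, and then (by the equation) $M:=\sup_{\varepsilon}\|B_{\varepsilon}u_{\varepsilon}\|_{L^{\infty}(0,T;H)}$, is finite.

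To obtain the Cauchy property, subtract the equations for $u_{\varepsilon}$ and $u_{\delta}$ and pair with $u_{\varepsilon}-u_{\delta}$. Using the identity $u_{\varepsilon}-u_{\delta}=(J_{\varepsilon}^{B}u_{\varepsilon}-J_{\delta}^{B}u_{\delta})+\varepsilon B_{\varepsilon}u_{\varepsilon}-\delta B_{\delta}u_{\delta}$ and the monotonicity of $B$ at the pairs $[J_{\varepsilon}^{B}u_{\varepsilon},B_{\varepsilon}u_{\varepsilon}],[J_{\delta}^{B}u_{\delta},B_{\delta}u_{\delta}]\in B$, one finds $(B_{\varepsilon}u_{\varepsilon}-B_{\delta}u_{\delta},u_{\varepsilon}-u_{\delta})\geq -(\varepsilon+\delta)M^{2}$, and Gronwall produces $\|u_{\varepsilon}-u_{\delta}\|_{C([0,T];H)}^{2}=O(\varepsilon+\delta)$, which is $c_{2})$. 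Finally, extract a weak $L^{2}$-limit $g$ of $B_{\varepsilon}u_{\varepsilon}$ along a subsequence; since $J_{\varepsilon}^{B}u_{\varepsilon}\to u$ strongly (as $|J_{\varepsilon}^{B}u_{\varepsilon}-u_{\varepsilon}|=\varepsilon|B_{\varepsilon}u_{\varepsilon}|\to 0$), the demiclosedness property (\ref{2.2})(c) identifies $g\in Bu$ almost everywhere, so $u\in W^{1,\infty}([0,T];H)$ is a strong solution of (\ref{2.3}); uniqueness then follows directly from $\alpha$-monotonicity. The main obstacle is the $L^{\infty}$ bound on $u_{\varepsilon}^{\prime}$: without the time regularity $f\in W^{1,1}$ one cannot control $B_{\varepsilon}u_{\varepsilon}$ uniformly, and the Cauchy estimate collapses.
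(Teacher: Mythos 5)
Your proposal is correct, and it is essentially the standard proof: the paper itself gives no argument for Proposition \ref{p2.1} but recalls it from Barbu \cite{BaOC}, p.~31, and your proof reproduces that classical scheme --- shift to the maximal monotone operator $B=A+\alpha I$, get $c_{1}$) by pairing with $u_{\varepsilon}-x_{0}$, use time translates and $f\in W^{1,1}$ together with $u_{0}\in D(A)$ to bound $\left\Vert u_{\varepsilon}^{\prime}\right\Vert _{L^{\infty}}$ and hence $\left\Vert B_{\varepsilon}u_{\varepsilon}\right\Vert _{L^{\infty}}$ uniformly, derive the Cauchy estimate from the decomposition $u_{\varepsilon}=J_{\varepsilon}^{B}u_{\varepsilon}+\varepsilon B_{\varepsilon}u_{\varepsilon}$ and monotonicity of $B$, and identify the limit by the strong--weak closedness of $B$ (property (\ref{2.2})-c, lifted to $L^{2}(0,T;H)$). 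All steps check out, including the $O(\varepsilon+\delta)$ Cauchy bound and the verification that the limit is a strong solution in $W^{1,\infty}([0,T];H)$ with uniqueness from $\alpha$-monotonicity and Gronwall.
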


Here we shall study the equation (\ref{1.2}) under the following basic
assumptions:$\,\,$%
\[
(H_{1})\left\{
\begin{tabular}
[c]{l}%
$i)\quad A:H\rightarrow2^{H}$ is $\alpha$-maximal monotone operator \ $\left(
\alpha\in R\right)  ,$\medskip\\
$ii)\quad\exists\;h_{0}\in H,\;\exists\,\,r_{0},a_{1},a_{2}>0$ \quad such
that\medskip\\
\quad\quad\quad$r_{0}\left\Vert y\right\Vert _{X^{\ast}}\leq\left(
y,x-h_{0}\right)  +a_{1}\left\vert x\right\vert ^{2}+a_{2},\;\forall\left[
x,y\right]  \in A.$%
\end{tabular}
\ \ \right.  \,\,
\]
$\quad\quad\quad\quad\quad\quad\quad\quad\quad\quad\quad\quad\quad\quad
\quad\quad\quad\quad\quad\quad\quad\,\,\quad\quad\,\,\quad\quad\quad$

\begin{theorem}
\label{t2.2}One of the following assumptions implies (H$_{1}$):$\smallskip$

\noindent(H$_{1}$-I)$\;\;\left\{
\begin{tabular}
[c]{l}%
$a)\quad A=A_{0}+\partial\varphi,\ $where $A_{0}:H\rightarrow H$ is a
continuous$\smallskip$\\
\quad\quad operator such that:$\smallskip$\\
\quad\quad\ $\exists\alpha\in R:\;\left(  A_{0}x-A_{0}y,x-y\right)
+\alpha\left\vert x-y\right\vert ^{2}\geq0\smallskip$\\
\quad\quad(i.e. $\alpha I+A_{0}$\ is continuous monotone operator
on\ $H$)$\smallskip$\\
$\quad\quad$and $\varphi:H\rightarrow\left]  -\infty,+\infty\right]  $ is a
proper convex$\smallskip$\\
\quad\quad lower-semicontinuous function,$\smallskip$\\
$b)\quad\exists h_{0}\in H,\;R_{0}>0,\;a_{0}>0$ such that$\smallskip$\\
\quad\quad$\varphi\left(  h_{0}+x\right)  \leq a_{0},\;\forall\,x\in
X,\;\left\Vert x\right\Vert _{X}\leq R_{0},$%
\end{tabular}
\right.  \smallskip$

or$\smallskip$

\noindent(H$_{1}$-II)$\;\left\{
\begin{tabular}
[c]{l}%
$a)\quad\exists\,V$ a separable Banach space such that\ $V\subset H\subset
V^{\ast}\smallskip$\\
\quad\quad densely and continuously and\ $V\cap X$\ is densely in
$X,\smallskip$\\
$b)\quad A:H\rightarrow2^{H}$\ is\ $\alpha$-maximal monotone
operator$\smallskip$\\
\quad\quad with $D\left(  A\right)  \subset V\smallskip$\\
$c)\quad\exists\,a,\beta\in R,\,a>0,$\ such that$\smallskip$\\
\quad\quad$(y_{1}-y_{2},x_{1}-x_{2})+\beta\left\vert x_{1}-x_{2}\right\vert
^{2}\geq a\left\Vert x_{1}-x_{2}\right\Vert ^{2},\smallskip$\\
\quad\quad$\forall\,\,[x_{1},y_{1}],\,\,[x_{2},y_{2}]\in A,\smallskip$\\
$d)\quad\exists\,h_{0}\in V,\;\exists\,r_{0},a_{0}>0$\ such that $h_{0}%
+r_{0}e\in D\left(  A\right)  $\ and$\smallskip$\\
\quad\quad$\left\Vert A^{0}\left(  h_{0}+r_{0}e\right)  \right\Vert _{V^{\ast
}}\leq r_{0}$\ for all $e\in V\cap X,\left\Vert e\right\Vert _{X}=1,$%
\end{tabular}
\ \right.  \smallskip$

or$\smallskip$

\noindent(H$_{1}$-III)$\left\{
\begin{tabular}
[c]{l}%
$a$)$\quad A$\ is $\alpha$-maximal monotone with $intD\left(  A\right)
\neq\emptyset\smallskip$\\
$b$)$\quad X=H$%
\end{tabular}
\ \right.  $
\end{theorem}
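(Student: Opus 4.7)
The plan is to verify condition (H$_1$) separately under each of the three alternative sets of assumptions. In all three cases the unifying idea is the same: for $[x,y]\in A$, combine monotonicity (or a subdifferential inequality) against a well-chosen family of test points of the form $h_0+re$ (with $e$ varying over a unit sphere in $X$ or a dense subset of it), use what the hypothesis tells us at those test points, and finally take a supremum in $e$ to recover $\|y\|_{X^{\ast}}$ on the left-hand side.

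\emph{Case (H$_1$-III).} Pick $h_0\in\mathrm{int}\,D(A)$ and $r_0>0$ so small that $\bar B(h_0,r_0)\subset D(A)$. By Rockafellar's local-boundedness theorem, $M:=\sup\{|A^{0}(h_0+r_0e)|:e\in H,\,|e|=1\}<\infty$. Apply the monotonicity of the maximal monotone operator $A+\alpha I$ to $[x,y]$ and $[h_0+r_0e,A^{0}(h_0+r_0e)]$; rearranging yields
\[
r_0(y,e)\le (y,x-h_0)-\bigl(A^{0}(h_0+r_0e),x-h_0-r_0e\bigr)+\alpha|x-h_0-r_0e|^{2}.
\]
Since $X=H$, so that $\|y\|_{X^{\ast}}=|y|$, bounding the middle term by $M(|x|+|h_0|+r_0)$ and absorbing the linear piece into $|x|^{2}$ yields (H$_1$).

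\emph{Case (H$_1$-II).} For each $e\in V\cap X$ with $\|e\|_X=1$, hypothesis (d) supplies $[h_0+r_0e,\,y_e]\in A$ with $\|y_e\|_{V^{\ast}}\le r_0$, where $y_e=A^{0}(h_0+r_0e)$. Feeding this into the $V$-strong monotonicity (c) and rearranging,
\[
r_0(y,e)\le (y,x-h_0)+r_0\|x-h_0-r_0e\|_V-a\|x-h_0-r_0e\|_V^{2}+\beta|x-h_0-r_0e|^{2}.
\]
The combination $r_0 s-as^{2}\le r_0^{2}/(4a)$ eliminates the $V$-norm, and $|x-h_0-r_0e|^{2}\le 2|x|^{2}+2(|h_0|+r_0\gamma_0)^{2}$ controls the Hilbert part. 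The left-hand side, taken as a supremum over the unit sphere of $V\cap X$, equals $r_0\|y\|_{X^{\ast}}$ because $V\cap X$ is dense in $X$ (hypothesis (a)) and $e\mapsto(y,e)$ is continuous on $X$ (since $y\in H\subset X^{\ast}$).

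\emph{Case (H$_1$-I).} Every $y\in Ax$ decomposes as $y=A_0x+w$ with $w\in\partial\varphi(x)$. The subdifferential inequality with $z=h_0+R_0e$ for $e\in X$, $\|e\|_X\le 1$, combined with $\varphi(h_0+R_0e)\le a_0$ from (b), gives
\[
R_0(w,e)\le (w,x-h_0)+a_0-\varphi(x),
\]
hence, after taking the sup over such $e$, $R_0\|w\|_{X^{\ast}}\le (w,x-h_0)+a_0-\varphi(x)$. The standard affine minorant of $\varphi$ bounds $-\varphi(x)$ by $C(1+|x|)$, and the $\alpha$-monotonicity of $A_0$ tested against $h_0$ bounds $-(A_0x,x-h_0)\le\alpha|x-h_0|^{2}+|A_0h_0||x-h_0|\le C(1+|x|^{2})$. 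Writing $w=y-A_0x$, using $\|A_0x\|_{X^{\ast}}\le\gamma_0|A_0x|$, and absorbing cross terms via Young then delivers (H$_1$) with $r_0=R_0$.

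The most delicate point is the final bound in Case (H$_1$-I): controlling $|A_0x|$ by an at-most-linear function of $|x|$. This is not literally stated in (H$_1$-I); the intended reading (and the situation in every application in the paper) is that the continuous $\alpha$-monotone $A_0\colon H\to H$ has (at most) linear growth, e.g.\ is affine bounded or a Nemytskii operator induced by a Lipschitz function. Under such a linear growth bound, Young's inequality absorbs the $A_0$-contribution into $a_1|x|^{2}+a_2$ and the proof is complete.
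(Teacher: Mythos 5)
Your treatments of (H$_1$-II) and (H$_1$-III) are correct. Case II is essentially the paper's own argument (the paper eliminates the $V$-norm via Young's inequality, $(A^{0}(h_0+r_0e),h_0+r_0e-x)\le\frac{1}{2a}\Vert A^{0}(h_0+r_0e)\Vert_{V^{\ast}}^{2}+\frac{a}{2}\Vert h_0+r_0e-x\Vert_{V}^{2}$, where you complete the square directly — same computation), and your density remark justifying $\sup_{e}(y,e)=\Vert y\Vert_{X^{\ast}}$ is a point the paper leaves implicit. In Case III you argue directly from local boundedness of the maximal monotone operator $A+\alpha I$ on $\mathrm{int}\,D(A)$, whereas the paper reduces III to II by checking that such an operator satisfies (H$_1$-II) with $V=H=X$, $\beta=2\left\vert\alpha\right\vert+1$, $a=\left\vert\alpha\right\vert+1$; both routes rest on the same local boundedness and your version is, if anything, more self-contained.

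Case (H$_1$-I), however, has a genuine gap, which you yourself flag: the decomposition $y=A_0x+w$ with separate $X^{\ast}$-estimates forces you to control $\Vert A_0x\Vert_{X^{\ast}}\le\gamma_0\left\vert A_0x\right\vert$, and nothing in (H$_1$-I) bounds $\left\vert A_0x\right\vert$ by $C(1+\left\vert x\right\vert)$ — already $H=\mathbb{R}$, $A_0x=x^{3}$ is continuous and monotone with superlinear growth, so your added linear-growth hypothesis proves a strictly weaker theorem, not the stated one. The missing idea is that (H$_1$) asks only for \emph{some} $r_0>0$, so the test radius may be shrunk below $R_0$: since $A_0$ is continuous at $h_0$ and $\left\vert re\right\vert_{H}\le\gamma_0 r$ for $\Vert e\Vert_{X}=1$, choose $r_0\in(0,R_0]$ with $\left\vert A_0(h_0+re)-A_0h_0\right\vert\le1$ for all $r\in(0,r_0]$ and all such $e$. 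Then never split $y$: from the subdifferential inequality, $(y-A_0x,h_0+r_0e-x)+\varphi(x)\le\varphi(h_0+r_0e)\le a_0$, and from the $\alpha$-monotonicity of $A_0$ tested against the point $h_0+r_0e$ itself (not against $h_0$, as you did), $(A_0x,h_0+r_0e-x)\le-\alpha\left\vert h_0+r_0e-x\right\vert^{2}+(A_0(h_0+r_0e),h_0+r_0e-x)$, where now $\left\vert A_0(h_0+r_0e)\right\vert\le\left\vert A_0h_0\right\vert+1$ uniformly in $e$. Combining the two, using the affine minorant $\varphi(x)\ge b_1\left\vert x\right\vert+b_2$, and taking the supremum over $\Vert e\Vert_{X}=1$ yields $r_0\Vert y\Vert_{X^{\ast}}\le(y,x-h_0)+a_1\left\vert x\right\vert^{2}+a_2$ with no growth hypothesis whatsoever: the operator $A_0$ is only ever evaluated on the compactly controlled set $\{h_0+r_0e:\Vert e\Vert_{X}=1\}$, where continuity bounds it, and the troublesome term $A_0x$ is traded away by monotonicity. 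This is exactly how the paper's proof proceeds.
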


\begin{proof}
(H$_{1}$-I)$\Rightarrow$(H$_{1}$).

We prove (H$_{1}$-ii). Let $r_{0}\in\left[  0,R_{0}\right]  $ such that
$\left\vert A_{0}\left(  h_{0}+re\right)  -A_{0}\left(  h_{0}\right)
\right\vert \leq1$ for all $r\in\left(  0,r_{0}\right]  $ and $e\in
X,\;\left\Vert e\right\Vert _{X}=1.$

Let$\;\;y\in A_{0}x+\partial\varphi\left(  x\right)  .$ Then $\left(
y-A_{0}x,h_{0}+r_{0}e-x\right)  +\varphi\left(  x\right)  \leq\varphi\left(
h_{0}+r_{0}e\right)  \leq a_{0},\,\,\,$ $\forall$ $e\in X,\;\left\Vert
e\right\Vert _{X}=1.$

Since $\varphi$ is a convex l.s.c. function, $\exists\,b_{1},b_{2}\in R$ such
that $\varphi\left(  x\right)  \geq b_{1}\left\vert x\right\vert +b_{2}.$
Hence%
\begin{align*}
r_{0}\left(  y,e\right)   &  \leq\left(  y,x-h_{0}\right)  +\left(
A_{0}x,h_{0}+r_{0}e-x\right)  -b_{1}\left\vert x\right\vert +a_{0}-b_{2}\\
&  \leq\left(  y,x-h_{0}\right)  -\alpha\left\vert h_{0}+r_{0}e-x\right\vert
^{2}+\\
&  \quad+\left(  A_{0}\left(  h_{0}+r_{0}e\right)  ,h_{0}+r_{0}e-x\right)
-b_{1}\left\vert x\right\vert +a_{0}-b_{2}%
\end{align*}
which clearly yields (H$_{%
1%
}$-ii).

(H$_{1}$-II) $\Rightarrow$ (H $_{1}$)

Let $\left[  x,y\right]  \in A.$ From (H$_{1}$-II-c) we have%
\[
\left(  A^{0}\left(  h_{0}+r_{0}e\right)  -y,h_{0}+r_{0}e-x\right)
+\beta\left\vert h_{0}+r_{0}e-x\right\vert ^{2}\geq a\left\Vert h_{0}%
+r_{0}e-x\right\Vert _{V}^{2}\
\]
for all $e\in V\cap X\;\left\Vert e\right\Vert _{X}=1$ and then%

\begin{align*}
r_{0}\left(  y,e\right)  +a\left\Vert h_{0}+r_{0}e-x\right\Vert _{V}^{2}  &
\leq\left(  y,x-h_{0}\right)  +\beta\left\vert h_{0}+r_{0}e-x\right\vert
^{2}+(A^{0}\left(  h_{0}+r_{0}e\right)  ,h_{0}+r_{0}e-x)\\
&  \leq\left(  y,x-h_{0}\right)  +\beta\left\vert h_{0}+r_{0}e-x\right\vert
^{2}+\frac{1}{2a}\left\Vert A^{0}\left(  h_{0}+r_{0}e\right)  \right\Vert
_{V^{\ast}}^{2}\\
&  \quad+\frac{a}{2}\left\Vert h_{0}+r_{0}e-x\right\Vert _{V}^{2}\;
\end{align*}
which gets (H$_{1}$-ii).

(H$_{1}$-III)$\Rightarrow$ (H$_{1}$).

An $\alpha$-maximal monotone operator is bounded on $\,IntD\left(  A\right)
.$ Hence $h_{0}\in H,\;r_{0}>0$ exist $\;$such that $\;h_{0}+x\in D\left(
A\right)  ,\;\;\left\vert A^{0}\left(  h_{0}+x\right)  \right\vert \leq
r_{0}\;\;$for all $x\in H,\;\left\vert x\right\vert \leq r_{0}$. This operator
satisfies (H$_{1}$-II) with $V=H=X,\;\beta=2\left\vert \alpha\right\vert
+1,\;a=\left\vert \alpha\right\vert +1$. So (H $_{1}$) holds.\hfill
\end{proof}

\subsection{Generalized deterministic solutions}

Let the spaces $X\subset H\subset X^{\ast}$ and the equation (\ref{1.2}) with
the assumptions:%
\begin{equation}%
\begin{array}
[c]{rl}%
i)\quad & A:H\rightarrow2^{H}\ \text{is }\alpha\text{-maximal monotone
operator,}\medskip\\
ii)\quad & f\in L^{1}\left(  0,T;H\right)  ,\medskip\\
iii)\quad & M\in C\left(  \left[  0,T\right]  ;X\right)  ,\;\;M\left(
0\right)  =0,\medskip\\
iv)\quad & u_{0}\in H
\end{array}
\label{2.4}%
\end{equation}
($\alpha\in\mathbb{R}$ \thinspace given).

\begin{definition}
\label{d2.1}A pair of function $(u,\eta)$\ is a generalized (deterministic)
solution of equation (\ref{1.2}) ($(u,\eta)=GD(A;u_{0},f,M)$) if the following
conditions hold:$\smallskip$

\noindent$d_{1})\quad u\in C([0,T];H),\;u(t)\in\overline{D(A)}\;\;\forall
t\in\lbrack0,T],\;u(0)=u_{0},$\medskip

\noindent$d_{2})\quad\eta\in C([0,T];H)\cap BV([0,T];X^{\ast}),\;\eta
(0)=0,$\medskip

\noindent$d_{3})\quad u(t)+\eta(t)=u_{0}+%
{\displaystyle\int\nolimits_{0}^{t}}
f(s)ds+M(t),\;\forall t\in\lbrack0,T],$\medskip

\noindent$d_{4})\quad$there are the sequences $\left\{  u_{0n}\right\}
\subset D(A)\;\;\left\{  f_{n}\right\}  \subset W^{1,1}([0,T];H),$\medskip

$\quad\quad\,\,M_{n}\in C([0,T];X)\cap W^{2,1}([0,T];H)$\ such that
\begin{equation}%
\begin{array}
[c]{rl}%
i)\quad & u_{0n}\rightarrow u_{0}\text{ in }H,\;f_{n}\rightarrow f\text{ in
}L^{1}\left(  0,T;H\right)  ,\medskip\\
\quad & \quad\quad\quad\quad\quad\quad\quad\quad\quad\,M_{n}\rightarrow
M\text{ in }C\left(  \left[  0,T\right]  ;X\right)  ,\medskip\\
ii)\quad & u_{n}\rightarrow u\text{ in }C\left(  \left[  0,T\right]
;H\right)  ,\;\eta_{n}\rightarrow\eta i\text{ in }C\left(  \left[  0,T\right]
;H\right)  ,\medskip\\
iii)\quad & \left\Vert \eta_{n}\right\Vert _{BV\left(  \left[  0,T\right]
;X^{\ast}\right)  }\leq C,
\end{array}
\ \label{2.5}%
\end{equation}
where $C$ is a constant depending only on $\left(  A,u_{0},f,M,T\right)  $ and
$u_{n}\in W^{1,\infty}\left(  \left[  0,T\right]  ;H\right)  $\ is the
(strong) solution of the approximating problem%
\begin{equation}
\left\{
\begin{tabular}
[c]{l}%
$u_{n}\left(  t\right)  +%
{\displaystyle\int_{0}^{t}}
h_{n}\left(  s\right)  ds=u_{0n}+%
{\displaystyle\int\nolimits_{0}^{t}}
f_{n}\left(  s\right)  ds+M_{n}\left(  t\right)  ,$\ $t\in\left[  0,T\right]
,$\medskip\\
$h_{n}\left(  t\right)  \in Au_{n}(t),$\ a.e. $t\in\left(  0,T\right)
,$\medskip\\
$\eta_{n}\left(  t\right)  =%
{\displaystyle\int_{0}^{t}}
h_{n}\left(  s\right)  ds,$\ $t\in\left[  0,T\right]  .$%
\end{tabular}
\ \right.  \label{2.5a}%
\end{equation}

\end{definition}

We remark that:

$\quad\eta_{n}\in W^{2,1}\left(  \left[  0,T\right]  ;H\right)  \subset
C\left(  \left[  0,T\right]  ;H\right)  \cap BV\left(  \left[  0,T\right]
;X^{\ast}\right)  $ and$\smallskip$

$\quad(u_{n},\eta_{n})=GD\left(  A;u_{0n},f_{n},M_{n}\right)  $

\begin{theorem}
\label{t2.3}\ Assume that%
\begin{equation}%
\begin{array}
[c]{rl}%
i)\quad & \text{the operator}\ \;A\,\;\text{satisfies }(H_{1})\text{,}%
\medskip\\
ii)\quad & u_{0}\in\overline{D\left(  A\right)  },\medskip\\
iii)\quad & f\in L^{1}\left(  0,T;H\right)  ,\medskip\\
iv)\quad & M\in C\left(  \left[  0,T\right]  ;X\right)  ,\;\;M\left(
0\right)  =0.
\end{array}
\ \label{2.6}%
\end{equation}
Then the equation (\ref{1.2}) has a unique generalized (deterministic) solution.

Moreover:

$c_{1})\quad$if $(u,\eta)=GD\left(  A;u_{0},f,M\right)  $\ and $(\overline
{u},\overline{\eta})=GD\left(  A;\overline{u}_{0},\overline{f},\overline
{M}\right)  $\ are two solutions, then:
\begin{equation}%
\begin{tabular}
[c]{l}%
$\left\Vert u-\overline{u}\right\Vert _{C\left(  \left[  0,T\right]
;H\right)  }^{2}\leq C[\left\vert u_{0}-\overline{u_{0}}\right\vert
^{2}+\left\Vert f-\overline{f}\right\Vert _{L^{1}\left(  0,T;H\right)  }^{2}%
+$\medskip\\
$\quad+\left\Vert M-\overline{M}\right\Vert _{C\left(  \left[  0,T\right]
;H\right)  }^{2}+\left\Vert M-\overline{M}\right\Vert _{C\left(  \left[
0,T\right]  ;X\right)  }\left\Vert \eta-\overline{\eta}\right\Vert _{BV\left(
\left[  0,T\right]  ;X^{\ast}\right)  }]$%
\end{tabular}
\ \label{2.7}%
\end{equation}
with $C=C\left(  \alpha,T\right)  $\ a positive constant, and

$c_{2})\quad$\ for every equiuniform continuous subset $K$\ of $C\left(
\left[  0,T\right]  ;X\right)  $, $M\in K$\ , there exists $C_{0}=C_{0}\left(
r_{0},h_{0},a_{1},a_{2},T,N_{K}\right)  $(\footnote{\textit{The constants
$r_{0},h_{0},a_{1},a_{2}$ are defined in (H$_{1}$); N$_{K}\in\mathbb{N}^{*}$
is a constant of equiuniformly continuity: sup$\{\left\Vert g\left(  t\right)
-g\left(  s\right)  \right\Vert _{X}:\left\vert t-s\right\vert \leq
T/N_{K}\}\leq r_{0}/4,\;\forall\,g\in K.$}})\ a positive constant $C_{0}$ such
that:
\begin{equation}
\left\Vert u\right\Vert _{C\left(  \left[  0,T\right]  ;H\right)  }%
^{2}+\left\Vert \eta\right\Vert _{BV\left(  \left[  0,T\right]  ;X^{\ast
}\right)  }\leq C_{0}[1+\left\vert u_{0}\right\vert ^{2}+\left\vert
f\right\vert _{L^{1}\left(  0,T;H\right)  }^{2}+\left\Vert M\right\Vert
_{C\left(  \left[  0,T\right]  ;H\right)  }^{2}] \label{2.8}%
\end{equation}

\end{theorem}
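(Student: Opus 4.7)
The plan is to derive the estimates (c$_1$) and (c$_2$) first at the level of the approximating solutions $(u_n,\eta_n)$ supplied by Proposition~\ref{p2.1}, then to pass to the limit along a smoothing sequence of the data for existence, and finally to invoke (c$_1$) for uniqueness.

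\textbf{Step 1 (A priori bound (c$_2$)).} This is where I expect the main obstacle. The coercivity in $(H_1\text{-}ii)$ is anchored at the fixed point $h_0$, whereas $M_n$ need not be small, so one cannot absorb $\|M_n\|_X\!\int_0^T\!\|h_n\|_{X^*}\,ds$ globally. The device is to exploit the equi\-uniform continuity constant $N_K$: partition $[0,T]$ into $N_K$ subintervals $[t_k,t_{k+1}]$ on which $\|M_n(t)-M_n(t_k)\|_X\le r_0/4$, and on each piece write $M_n(t)=M_n(t_k)+\widetilde M_n(t)$. Setting $w_n:=u_n-\widetilde M_n$, the approximating equation (\ref{2.5a}) becomes $w_n'+h_n=f_n$ a.e.\ on $(t_k,t_{k+1})$. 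Taking the $H$-scalar product with $u_n-h_0=w_n-h_0+\widetilde M_n$, and using the lower bound
\[
(h_n,u_n-h_0)\;\ge\;r_0\|h_n\|_{X^{*}}-a_1|u_n|^2-a_2
\]
coming from $(H_1\text{-}ii)$ applied to $[u_n,h_n]\in A$, while estimating the crossterm by $|(h_n,\widetilde M_n)_{X^{*},X}|\le(r_0/4)\|h_n\|_{X^{*}}$, one absorbs a quarter of the $X^{*}$-norm on the left and is left with an inequality of the form
\[
\tfrac{1}{2}|w_n(t)-h_0|^2+\tfrac{3r_0}{4}\!\int_{t_k}^{t}\!\|h_n\|_{X^{*}}\,ds\;\le\;\tfrac{1}{2}|u_n(t_k)-h_0|^2+2a_1\!\int_{t_k}^{t}\!|w_n-h_0|^2\,ds+R_{n,k},
\]
where $R_{n,k}$ depends only on the data on $[t_k,t_{k+1}]$. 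A Gronwall argument over the subinterval of length $T/N_K$ gives a recursion $|u_n(t_{k+1})|^2+\int_{t_k}^{t_{k+1}}\!\|h_n\|_{X^{*}}\,ds\le C(|u_n(t_k)|^2+\text{data})$, and iterating $N_K$ times yields (\ref{2.8}) with a constant $C_0=C_0(r_0,h_0,a_1,a_2,T,N_K)$.

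\textbf{Step 2 (Continuous dependence (c$_1$)).} Subtract the approximating equations for the two data triples and set $V_n:=(u_n-\bar u_n)-(M_n-\bar M_n)$, so that $V_n'+(h_n-\bar h_n)=f_n-\bar f_n$ with $V_n(0)=u_{0n}-\bar u_{0n}$. Take the $H$-scalar product with $u_n-\bar u_n=V_n+(M_n-\bar M_n)$. The $\alpha$-monotonicity of $A$ gives $(h_n-\bar h_n,u_n-\bar u_n)\ge-\alpha|u_n-\bar u_n|^2$, while the coupling term is estimated in $X^{*}$--$X$ duality by
\[
\int_0^T\bigl|(h_n-\bar h_n,M_n-\bar M_n)\bigr|\,ds\;\le\;\|M_n-\bar M_n\|_{C([0,T];X)}\,\|\eta_n-\bar\eta_n\|_{BV([0,T];X^{*})},
\]
since $\eta_n'=h_n$. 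Gronwall then produces (\ref{2.7}) at the approximation level, and passing to the limit preserves it (the BV-norm on the right survives via lower semicontinuity combined with the uniform bound from Step 1).

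\textbf{Step 3 (Existence and uniqueness).} Build one smoothing sequence $(u_{0n},f_n,M_n)$ satisfying (\ref{2.5})(i) with $u_{0n}\in D(A)$ (e.g.\ $u_{0n}=J_{1/n}u_0$), $f_n\in W^{1,1}$ (time mollification) and $M_n\in W^{2,1}$ (a smoother mollification). Applying (\ref{2.7}) to $(u_n,\eta_n)$ versus $(u_m,\eta_m)$, together with the uniform BV bound from Step 1, shows that $\{(u_n,\eta_n)\}$ is Cauchy in $C([0,T];H)\times C([0,T];H)$; the limit $(u,\eta)$ satisfies $d_1$--$d_3$ by passage to the limit in (\ref{2.5a}), and $d_4$ by construction. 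Uniqueness is automatic: any two generalized solutions come with approximating sequences for which the right-hand side of (\ref{2.7}) vanishes in the limit, because both sequences approximate the same $(u_0,f,M)$ and the BV norms are uniformly bounded by Step 1.

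The crux of the argument is Step 1: it is exactly the interplay between $(H_1\text{-}ii)$ and the $X$-oscillation of $M$ on a partition determined by $N_K$ that makes $\|\eta\|_{BV([0,T];X^{*})}$ finite. Everything else, including Step 2, reduces to Gronwall inequalities and the standard $X^{*}$-$X$ duality pairing.
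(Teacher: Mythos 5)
Your proposal is essentially the paper's own proof, with one cosmetic variant and one genuine (though fixable) misstep. The heart of your Step 1 --- partitioning $[0,T]$ into $N_K$ subintervals on which the $X$-oscillation of $M_n$ is at most $r_0/4$, and absorbing the resulting $\frac{r_0}{4}\left\Vert h_n\right\Vert_{X^{*}}$ into the coercive term $r_0\left\Vert h_n\right\Vert_{X^{*}}$ supplied by (H$_{1}$-ii) --- is exactly the paper's device. The execution differs only in form: you shift locally, setting $w_n=u_n-(M_n-M_n(t_k))$ on each subinterval and iterating a Gronwall recursion $N_K$ times, whereas the paper keeps a single global energy identity (\ref{2.11}) and splits the one integral $\int_0^t(\eta_n^{\prime}(s),M_n(s))ds$ along the partition, pairing the anchored values $M_n(t_i)$ in $H$ (whence its term $2N_0\left\Vert M_n\right\Vert_{C([0,T];H)}\left\Vert \eta_n\right\Vert_{C([0,T];H)}$). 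Your variant works: since $\left\vert\cdot\right\vert\leq\gamma_0\left\Vert\cdot\right\Vert_X$, the local increments $\widetilde M_n$ are bounded by an absolute constant, so the recursion closes and yields a constant of the admissible form $C_0(r_0,h_0,a_1,a_2,T,N_K)$. Steps 2 and 3 coincide with the paper's scheme ($\alpha$-monotonicity plus $X$--$X^{*}$ duality for the coupling term, mollified data, and the Cauchy property in $C([0,T];H)$ obtained from (\ref{2.7}) with the uniform BV bound neutralizing the duality term).

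The misstep: in Step 2 you estimate the coupling term by duality at level $n$ and then claim the bound ``survives via lower semicontinuity'' of the BV norm. Lower semicontinuity points the wrong way for an upper bound: it gives $\left\Vert \eta-\overline{\eta}\right\Vert_{BV([0,T];X^{*})}\leq\liminf_n\left\Vert \eta_n-\overline{\eta}_n\right\Vert_{BV([0,T];X^{*})}$, so your order of operations only produces (\ref{2.7}) with $\limsup_n\left\Vert \eta_n-\overline{\eta}_n\right\Vert_{BV}$ on the right, which is weaker than the stated $c_1$). The paper avoids this by passing to the limit in the Stieltjes integral itself: $M_n-\overline{M}_n\rightarrow M-\overline{M}$ in $C([0,T];X)$ together with the uniform $BV([0,T];X^{*})$ bounds and $\eta_n-\overline{\eta}_n\rightarrow\eta-\overline{\eta}$ in $C([0,T];H)$ give $\int_s^t(M_n-\overline{M}_n,d\eta_n-d\overline{\eta}_n)\rightarrow\int_s^t(M-\overline{M},d\eta-d\overline{\eta})$ along a subsequence (its inequality (\ref{2.9})), and only \emph{then} is duality applied, in the limit. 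This reordering costs nothing --- you already have every ingredient --- but it is needed to obtain (\ref{2.7}) verbatim. The slip is harmless where you actually use it: for the Cauchy argument the duality bound is applied at finite $n,m$ with no limit, and for uniqueness $\left\Vert M_n-\overline{M}_n\right\Vert_{C([0,T];X)}\rightarrow0$ kills the term regardless (note, though, that there the BV bounds on the two approximating sequences come from condition (\ref{2.5})-iii of Definition \ref{d2.1}, not from your Step 1, since a generalized solution carries its own approximating sequence).
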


\begin{proof}
Uniqueness.\ The uniqueness follows from (\ref{2.7}), and to prove (\ref{2.7})
let $\left(  u_{n},\eta_{n}\right)  =GD\left(  A;u_{0n},f_{n},M_{n}\right)
\;$ and $\;\left(  \overline{u}_{n},\overline{\eta}_{n}\right)  =GD\left(
A;\overline{u}_{0n},\overline{f}_{n},\overline{M}_{n}\right)  $ where
$\left\{  u_{0n},f_{n},M_{n}\right\}  ,\left\{  \overline{u}_{0n}%
,\overline{f_{n}},\overline{M}_{n}\right\}  $ are chosen as in Definition
\ref{d2.1}. Then by an easy calculation involving equation (\ref{2.5a}) we
obtain:%
\[%
\begin{array}
[c]{l}%
\left\vert u_{n}\left(  t\right)  -M_{n}\left(  t\right)  -\overline{u}%
_{n}\left(  t\right)  +\overline{M}_{n}\left(  t\right)  \right\vert ^{2}%
\leq|u_{n}\left(  s\right)  -M_{n}\left(  s\right)  -\overline{u}_{n}\left(
s\right)  \medskip\\
\displaystyle\quad+\overline{M}_{n}\left(  s\right)  |^{2}+2\int%
\nolimits_{s}^{t}\left(  M_{n}-\overline{M}_{n},d\eta_{n}-d\overline{\eta}%
_{n}\right)  +2\alpha\int\nolimits_{s}^{t}\left\vert u_{n}-\overline{u}%
_{n}\right\vert ^{2}d\tau\medskip\\
\displaystyle\quad+2\int\nolimits_{s}^{t}\left(  f_{n}-\overline{f}_{n}%
,u_{n}-M_{n}-\overline{u}_{n}+\overline{M}_{n}\right)  d\tau,
\end{array}
\]
Passing to limit on a subsequences $n_{k}\rightarrow\infty,$ we get:%
\begin{equation}%
\begin{tabular}
[c]{l}%
$\left\vert u\left(  t\right)  -M\left(  t\right)  -\overline{u}\left(
t\right)  +\overline{M}\left(  t\right)  \right\vert ^{2}\leq\left\vert
u\left(  s\right)  -M\left(  s\right)  -\overline{u}\left(  s\right)
+\overline{M}\left(  s\right)  \right\vert ^{2}$\medskip\\
$\displaystyle\quad+2\int\nolimits_{s}^{t}\left(  M\left(  \tau\right)
-\overline{M}\left(  \tau\right)  ,d\eta\left(  \tau\right)  -d\overline{\eta
}\left(  \tau\right)  \right)  +2\alpha\int\nolimits_{s}^{t}\left\vert
u-\overline{u}\right\vert ^{2}d\tau$\medskip\\
$\displaystyle\quad+2\int\nolimits_{s}^{t}\left(  f-\overline{f}%
,u-M-\overline{u}+\overline{M}\right)  d\tau$%
\end{tabular}
\ \ \label{2.9}%
\end{equation}
for all $0\leq s\leq t\leq T,$ which implies clearly (\ref{2.7}).

Existence. If $Y$ is a Banach space and $\;g:\left[  0,T\right]  \rightarrow
Y$ \ is a continuous function we set%
\[
m_{Y}\left(  \delta;g\right)  =\sup\left\{  \left\Vert g\left(  t\right)
-g\left(  s\right)  \right\Vert _{Y}\;\;t,s\in\left[  0,T\right]
,\;\left\vert t-s\right\vert \leq\delta\right\}
\]
(modulus of continuity).

For $\;u_{0},f,M$ given as in (\ref{2.6}) these exist the sequences%
\[%
\begin{array}
[c]{l}%
\left\{  u_{0n}\right\}  \subset D\left(  A\right)  ,\;\left\{  f_{n}\right\}
\subset W^{1,1}\left(  \left[  0,T\right]  ;H\right)  ,\medskip\\
\left\{  M_{n}\right\}  \subset C\left(  \left[  0,T\right]  ;X\right)  \cap
W^{2,1}\left(  \left[  0,T\right]  ;H\right)  ,\;\;M_{n}\left(  0\right)  =0
\end{array}
\]
such that $m_{X}\left(  \delta,M_{n}\right)  \leq m_{X}\left(  \delta
,M\right)  ,\;\forall n\in N^{\ast},\;\forall\delta>0,$

$u_{0n}\rightarrow u_{0}\;\;$in \ \ H,$\smallskip$

$f_{n}\rightarrow f$ \ \ in \ \ $L^{1}\left(  0,T;H\right)  ,\smallskip$

$M_{n}\rightarrow M$ \ in \ $C\left(  \left[  0,T\right]  ;X\right)
.\smallskip$

The conditions on $M_{n}$ are satisfied setting, for example,
\[
M_{n}\left(  t\right)  =n\int\nolimits_{R}\rho\left(  n\left(  t-s\right)
-1\right)  \widetilde{M}\left(  s\right)  ds=\int\nolimits_{R}\rho\left(
r\right)  \widetilde{M}\left(  t-\frac{1+r}{n}\right)  dr,
\]
where$\;\rho\in C_{0}^{\infty}\left(  R\right)  ,\;\;\rho\left(  -r\right)
=\rho\left(  r\right)  \geq0\;\;\forall\,r\in R,\;\rho\left(  r\right)
=0\;\forall\,\left\vert r\right\vert \geq1,$ $\int\nolimits_{R}\rho\left(
r\right)  dr=1$ and%
\[
\widetilde{M}\left(  t\right)  =\left\{
\begin{tabular}
[c]{l}%
$M\left(  0\right)  ,\;\;t<0$\medskip\\
$M\left(  t\right)  ,\;\;t\in\left[  0,T\right]  $\medskip\\
$M\left(  T\right)  ,\;t>T.$%
\end{tabular}
\ \ \right.
\]
Let $K$ be an equiuniformly continuous subset of $C\left(  \left[  0,T\right]
;X\right)  $ which contains $M,$ and let $\delta=T/N_{0}$ sufficiently small
such that%
\[
m_{X}\left(  \frac{T}{N_{0}},M\right)  \leq\frac{r_{0}}{4}%
\]
The approximating problem:
\begin{equation}%
\begin{tabular}
[c]{l}%
$u_{n}^{^{\prime}}\left(  t\right)  +Au_{n}\left(  t\right)  \ni f_{n}\left(
t\right)  +M_{n}^{^{\prime}}\left(  t\right)  $\medskip\\
$u_{n}\left(  0\right)  =u_{0n}$%
\end{tabular}
\ \ \label{2.10}%
\end{equation}
has a unique strong solution $\;u_{n}\in W^{1,\infty}\left(  \left[
0,T\right]  ;H\right)  $ \ and the sequence $\eta_{n}$ defined by $\eta
_{n}^{\prime}\left(  t\right)  =f_{n}\left(  t\right)  +M_{n}^{\prime}\left(
t\right)  -u_{n}^{\prime}\left(  t\right)  \in Au_{n}\left(  t\right)
$,$\;\;\eta_{n}\left(  0\right)  =0,$ is in $\;W^{2,1}\left(  \left[
0,T\right]  ;H\right)  $. We multiply equation (\ref{2.10}) by $\;u_{n}%
-M_{n}-h_{0}$ and integrate on $\left[  0,t\right]  ;$ the equality%
\begin{equation}%
\begin{array}
[c]{c}%
\displaystyle\left\vert u_{n}\left(  t\right)  -M_{n}\left(  t\right)
-h_{0}\right\vert ^{2}+2\int\nolimits_{0}^{t}\left(  \eta_{n}^{^{\prime}%
}\left(  s\right)  ,u_{n}\left(  s\right)  -h_{0}\right)  ds=\left\vert
u_{0n}-h_{0}\right\vert ^{2}\medskip\\
\displaystyle\quad+2\int\nolimits_{0}^{t}\left(  f_{n}\left(  s\right)
,u_{n}\left(  s\right)  -M_{n}\left(  s\right)  -h_{0}\right)  ds+2\int%
\nolimits_{0}^{t}\left(  \eta_{n}^{^{\prime}}\left(  s\right)  ,M_{n}\left(
s\right)  \right)  ds
\end{array}
\label{2.11}%
\end{equation}
follows. Let$\;0=r_{0}<r_{1}<...<r_{m}=T$, $r_{i+1}-r_{i}=\frac{T}{N_{0}}$,
$i=\overline{0,m-1}$ \ and \ $t\in\lbrack r_{k},r_{k+1}].$ Denote $t_{i}%
=r_{i}$ if $i\in\overline{0,k},$ $t_{k+1}=t$ . Then%
\[%
\begin{array}
[c]{l}%
\displaystyle\int\nolimits_{0}^{t}\left(  \eta_{n}^{^{\prime}}\left(
s\right)  ,M_{n}\left(  s\right)  \right)  ds=\medskip\\
\displaystyle=\sum\limits_{i=0}^{k}\left[  \int\nolimits_{t_{i}}^{t_{i+1}%
}\left(  M_{n}\left(  s\right)  -M_{n}\left(  t_{i}\right)  ,d\eta_{n}\left(
s\right)  \right)  +\left(  M_{n}\left(  t_{i}\right)  ,\eta_{n}\left(
t_{i+1}\right)  -\eta_{n}\left(  t_{i}\right)  \right)  \right]  \medskip\\
\leq m_{X}\left(  \frac{T}{N_{0}},M_{n}\right)  \left\Vert \eta_{n}\right\Vert
_{BV\left(  \left[  0,T\right]  ;X^{\ast}\right)  }+2N_{0}\left\Vert
M_{n}\right\Vert _{C\left(  \left[  0,T\right]  ;H\right)  }\left\Vert
\eta_{n}\right\Vert _{C\left(  \left[  0,T\right]  ;H\right)  }\medskip\\
\displaystyle\leq\frac{r_{0}}{4}\left\Vert \eta_{n}\right\Vert _{BV\left(
\left[  0,T\right]  ;X^{\ast}\right)  }+2N_{0}\left\Vert M_{n}\right\Vert
_{C\left(  \left[  0,T\right]  ;H\right)  }[\int\nolimits_{0}^{T}\left\vert
f_{n}\left(  s\right)  \right\vert ds+\medskip\\
\quad+\sup\limits_{s\in\left[  0,t\right]  }\left\vert u_{n}\left(  s\right)
-M_{n}\left(  s\right)  -h_{0}\right\vert +\left\vert h_{0}\right\vert ]
\end{array}
\]
Hence by (H$_{1}$)%
\[%
\begin{array}
[c]{l}%
\displaystyle\sup\limits_{s\in\left[  0,t\right]  }\left\vert u_{n}\left(
s\right)  -M_{n}\left(  s\right)  -h_{0}\right\vert ^{2}+2r_{0}\int%
\nolimits_{0}^{t}\left\Vert \eta_{n}^{\prime}\left(  s\right)  \right\Vert
_{X^{\ast}}ds\leq2\left\vert u_{0n}-h_{0}\right\vert ^{2}\medskip\\
\displaystyle\quad+18\left(  \int\nolimits_{0}^{T}\left\vert f_{n}\left(
s\right)  \right\vert ds\right)  ^{2}\ +4\left\vert a_{2}\right\vert
T+\frac{1}{2}\sup\limits_{s\in\left[  0,t\right]  }\left\vert u_{n}\left(
s\right)  -M_{n}\left(  s\right)  -h_{0}\right\vert ^{2}\medskip\\
\displaystyle\quad+4\left\vert a_{1}\right\vert \int\nolimits_{0}^{t}%
\sup\limits_{\tau\in\left[  0,s\right]  }\left\vert u_{n}\left(  \tau\right)
\right\vert ^{2}d\tau+r_{0}\left\Vert \eta_{n}\right\Vert _{BV\left(  \left[
0,T\right]  ;X^{\ast}\right)  }\ +18N_{0}^{2}\left\Vert M_{n}\right\Vert
_{C\left(  \left[  0,T\right]  ;H\right)  }^{2}%
\end{array}
\]
We obtain%
\begin{equation}%
\begin{tabular}
[c]{l}%
$\left\Vert u_{n}\right\Vert _{C\left(  \left[  0,T\right]  ;H\right)  }%
^{2}+\left\Vert \eta_{n}\right\Vert _{BV\left(  \left[  0,T\right]  ;X^{\ast
}\right)  }\medskip$\\
$\leq C\left[  1+\left\vert u_{0n}\right\vert ^{2}+\left\Vert f_{n}\right\Vert
_{L^{1}\left(  0,T;H\right)  }^{2}+\left\Vert M_{n}\right\Vert _{C\left(
\left[  0,T\right]  ;H\right)  }^{2}\right]  \leq C_{1},$%
\end{tabular}
\ \ \label{2.12}%
\end{equation}
where $\;C=C\left(  r_{0,}h_{0},N_{0},T,a_{1},a_{2}\right)  $ \ and
\ $C_{1}=C_{1}\left(  A,T,u_{0},f,M\right)  $ are positive constants.

Since \ $\left(  u_{n},\eta_{n}\right)  =GD\left(  A;u_{0n},f_{n}%
,M_{n}\right)  ,$ $n\in\mathbb{N},\;$then by (\ref{2.7}) which we already
proved we have%
\[%
\begin{array}
[c]{l}%
\left\Vert u_{n}-u_{m}\right\Vert _{C\left(  \left[  0,T\right]  ;H\right)
}^{2}\leq C[\left\vert u_{0n}-u_{0m}\right\vert ^{2}+\left\Vert f_{n}%
-f_{m}\right\Vert _{L^{1}\left(  0,T;H\right)  }^{2}\medskip\\
\quad+\left\Vert M_{n}-M_{m}\right\Vert _{C\left(  \left[  0,T\right]
;H\right)  }^{2}+\left\Vert M_{n}-M_{m}\right\Vert _{C\left(  \left[
0,T\right]  ;X\right)  }\left\Vert \eta_{n}-\eta_{m}\right\Vert _{BV\left(
\left[  0,T\right]  :X^{\ast}\right)  }]\medskip\\
\leq C_{2}[\left\vert u_{0n}-u_{0m}\right\vert ^{2}+\left\Vert f_{n}%
-f_{m}\right\Vert _{L^{1}\left(  0,T;H\right)  }^{2}+\left\Vert M_{n}%
-M_{m}\right\Vert _{C(\left[  0,T\right]  ;H)}^{2}+\medskip\\
\quad+\left\Vert M_{n}-M_{m}\right\Vert _{C\left(  \left[  0,T\right]
;X\right)  }]
\end{array}
\]
where \ $C_{2}=C_{2}\left(  A,T,u_{0},f,M\right)  >0.\;$Hence there exists
$u\in C\left(  \left[  0,T\right]  ;H\right)  $ such that%
\[%
\begin{tabular}
[c]{l}%
$u_{n}\rightarrow u\;$\ in \ $C\left(  \left[  0,T\right]  ;H\right)
\medskip$\\
\multicolumn{1}{r}{$\displaystyle\eta_{n}=u_{0n}+\int\nolimits_{0}^{\cdot
}f_{n}\left(  s\right)  ds+M_{n}-u_{n}\rightarrow u_{0}+\int\nolimits_{0}%
^{\cdot}fds+M-u\medskip$}\\
\multicolumn{1}{r}{in $C\left(  \left[  0,T\right]  ;H\right)  \medskip$}\\
$\eta_{n}\rightarrow\eta$ \ weak star in \ $BV\left(  \left[  0,T\right]
;X^{\ast}\right)  $%
\end{tabular}
\ \
\]
an by (\ref{2.12}) the inequality (\ref{2.8}) follows. The proof is
complete.\hfill
\end{proof}

From (\ref{2.11}) and (H$_{1}$) as $n\rightarrow\infty,$ we have:

\begin{remark}
\label{r2.1}If $(u,\eta)=GD\left(  A;u_{0},f,M\right)  $, then:%
\[%
\begin{array}
[c]{l}%
\left\vert u\left(  t\right)  -M\left(  t\right)  -h_{0}\right\vert
^{2}+2r_{0}\left\Vert \eta\right\Vert _{BV\left(  \left[  0,t\right]
;X^{\ast}\right)  }\leq\left\vert u_{0}-h_{0}\right\vert ^{2}+2\left\vert
a_{2}\right\vert t\medskip\\
\displaystyle\quad+2\left\vert a_{1}\right\vert \int\limits_{0}^{t}\left\vert
u(s)\right\vert ^{2}ds+2\int\nolimits_{0}^{t}\left(  f\left(  s\right)
,u\left(  s\right)  -M\left(  s\right)  -h_{0}\right)  ds+2\int\nolimits_{0}%
^{t}\left(  M\left(  s\right)  ,d\eta\left(  s\right)  \right)  ,
\end{array}
\]
for all $t\in\left[  0,T\right]  .$
\end{remark}

In the next section this inequality will be used for some estimates of the
stochastic generalized solutions.

\begin{corollary}
\label{c2.4}Let the assumptions of Theorem \ref{t2.3} be satisfied
$A_{\varepsilon}^{\alpha}$ the Yosida approximation of the operator $A+\alpha
I$ and $u_{\varepsilon},\;$with $0<\varepsilon<\frac{1}{\left\vert
\alpha\right\vert +1},\;$the solution of the penalized equation%
\begin{equation}%
\begin{tabular}
[c]{l}%
$du_{\varepsilon}\left(  t\right)  +\left(  A_{\varepsilon}^{\alpha
}u_{\varepsilon}\left(  t\right)  -\alpha u_{\varepsilon}\left(  t\right)
\right)  dt=f\left(  t\right)  dt+dM\left(  t\right)  ,\medskip$\\
$u_{\varepsilon}\left(  0\right)  =u_{0},\medskip$\\
$\eta_{\varepsilon}\left(  t\right)  =\int\nolimits_{0}^{t}\left(
A_{\varepsilon}^{\alpha}u_{\varepsilon}\left(  s\right)  -\alpha
u_{\varepsilon}\left(  s\right)  \right)  ds.$%
\end{tabular}
\ \label{2.13}%
\end{equation}
Then as \ $\varepsilon\rightarrow0:$

\noindent$%
\begin{tabular}
[c]{l}%
$u_{\varepsilon}\rightarrow u\quad$\textit{in}$\quad C\left(  \left[
0,T\right]  ;H\right)  ,\smallskip$\\
$\eta_{\varepsilon}\rightarrow\eta\quad$in$\quad C\left(  \left[  0,T\right]
;H\right)  ,\smallskip$\\
$\eta_{\varepsilon}\overset{w^{\ast}}{\rightarrow}\eta\quad$\textit{(weak
star) in }$BV\left(  \left[  0,T\right]  ;X^{\ast}\right)  .$%
\end{tabular}
$
\end{corollary}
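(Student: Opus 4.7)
The plan is to combine Proposition~\ref{p2.1} with Theorem~\ref{t2.3} through a double-approximation scheme. I would introduce the smoothings $(u_{0n},f_{n},M_{n})\to(u_{0},f,M)$ exactly as constructed in the proof of Theorem~\ref{t2.3}, and let $u_{\varepsilon}^{n}$ denote the solution of the penalized equation (\ref{2.13}) with these smooth data, $\eta_{\varepsilon}^{n}(t)=u_{0n}+\int_{0}^{t}f_{n}\,ds+M_{n}(t)-u_{\varepsilon}^{n}(t)$, while $u_{n}$ is the strong solution of the approximating problem (\ref{2.5a}). Two of the three limits are then available off the shelf. Since $M_{n}\in W^{2,1}([0,T];H)$, the inhomogeneous term $f_{n}+M_{n}^{\prime}$ in (\ref{2.13}) lies in $W^{1,1}([0,T];H)$, so Proposition~\ref{p2.1} (applied to $A$) yields, for each fixed $n$, $u_{\varepsilon}^{n}\to u_{n}$ in $C([0,T];H)$ as $\varepsilon\to 0$, and hence $\eta_{\varepsilon}^{n}\to\eta_{n}$ in $C([0,T];H)$; and Theorem~\ref{t2.3} gives $u_{n}\to u$ and $\eta_{n}\to\eta$ in $C([0,T];H)$ as $n\to\infty$.

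To bridge these into the single limit $u_{\varepsilon}\to u$, I would rerun the computation (\ref{2.11})--(\ref{2.12}) directly on the penalized equation itself to secure
\[
\|u_{\varepsilon}^{n}\|_{C([0,T];H)}^{2}+\|\eta_{\varepsilon}^{n}\|_{BV([0,T];X^{\ast})}\le C_{1}
\]
with $C_{1}$ independent of both $\varepsilon$ and $n$. The observation that makes $(H_{1})$ applicable is that $[J_{\varepsilon}^{\alpha}u_{\varepsilon}^{n},\,A_{\varepsilon}^{\alpha}u_{\varepsilon}^{n}-\alpha J_{\varepsilon}^{\alpha}u_{\varepsilon}^{n}]\in A$, while the residual $u_{\varepsilon}^{n}-J_{\varepsilon}^{\alpha}u_{\varepsilon}^{n}=\varepsilon A_{\varepsilon}^{\alpha}u_{\varepsilon}^{n}$ should enter only as a lower-order contribution. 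Once this uniform BV bound is in hand, the stability computation (\ref{2.9})--(\ref{2.7}) carries over verbatim to two penalized solutions, because its derivation uses only the monotonicity of the map $x\mapsto A_{\varepsilon}^{\alpha}x-\alpha x$, which holds with the same $\alpha$ for every $\varepsilon$. Applied to $u_{\varepsilon}^{n}$ versus $u_{\varepsilon}$ as penalized solutions with data $(u_{0n},f_{n},M_{n})$ and $(u_{0},f,M)$, the resulting analogue of (\ref{2.7}) has right-hand side tending to zero as $n\to\infty$ uniformly in $\varepsilon$.

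The conclusion then follows from the triangle inequality
\[
\|u_{\varepsilon}-u\|_{C([0,T];H)}\le\|u_{\varepsilon}-u_{\varepsilon}^{n}\|_{C}+\|u_{\varepsilon}^{n}-u_{n}\|_{C}+\|u_{n}-u\|_{C}:
\]
choose $n$ large first to control the first and third terms uniformly in $\varepsilon$, then let $\varepsilon\to 0$ with that $n$ fixed. The convergence $\eta_{\varepsilon}\to\eta$ in $C([0,T];H)$ is immediate from $\eta_{\varepsilon}=u_{0}+\int_{0}^{\cdot}f\,ds+M-u_{\varepsilon}$, and the weak-$\ast$ convergence in $BV([0,T];X^{\ast})$ follows from the uniform BV bound together with uniqueness of the $C([0,T];H)$-limit. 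The delicate point I expect to cost the most effort is the $\varepsilon$-uniform BV estimate: the residual $\alpha\varepsilon A_{\varepsilon}^{\alpha}u_{\varepsilon}^{n}$ left behind by the Yosida decomposition must be tracked carefully through (\ref{2.11}) to verify that it does not spoil the absorption coefficient of $\|\eta_{\varepsilon}^{n}\|_{BV}$ before it is moved to the left-hand side.
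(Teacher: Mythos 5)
Your proposal is correct and follows essentially the same route as the paper: smooth the data, use Proposition~\ref{p2.1} for each fixed $n$, obtain $\varepsilon$-uniform estimates by observing that the penalized operator inherits (H$_{1}$-ii) with constants independent of $\varepsilon$ (via $A_{\varepsilon}^{\alpha}x-\alpha J_{\varepsilon}^{\alpha}x\in A(J_{\varepsilon}^{\alpha}x)$ and $J_{\varepsilon}^{\alpha}x=x-\varepsilon A_{\varepsilon}^{\alpha}x$), apply the stability estimate (\ref{2.7}) to the two penalized solutions, and conclude by the triangle inequality. The only cosmetic difference is that where you propose to rerun (\ref{2.11})--(\ref{2.12}) by hand, the paper packages the same computation by noting $(u_{\varepsilon},\eta_{\varepsilon})=GD(A_{\varepsilon}^{\alpha}-\alpha I;u_{0},f,M)$ and invoking (\ref{2.7})--(\ref{2.8}) directly for this operator, with the residual you worry about absorbed exactly through the inequality $r_{0}\Vert A_{\varepsilon}^{\alpha}x-\alpha x\Vert_{X^{\ast}}\leq(A_{\varepsilon}^{\alpha}x-\alpha x,x-h_{0})+b_{1}\vert x\vert^{2}+b_{2}$.
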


\begin{remark}
\label{r2.2}We remark that \ $u_{\varepsilon}=v_{\varepsilon}+M$,\ where
$v_{\varepsilon}\in C^{1}\left(  \left[  0,T\right]  ;H\right)  $ is the
strong solution of the equation:%
\[
\left\{
\begin{tabular}
[c]{l}%
$v_{\varepsilon}^{\prime}+A_{\varepsilon}^{\alpha}\left(  v_{\varepsilon
}+M\left(  t\right)  \right)  -\alpha\left(  v_{\varepsilon}+M\left(
t\right)  \right)  =f\left(  t\right)  \smallskip$\\
$v_{\varepsilon}\left(  0\right)  =u_{0}$%
\end{tabular}
\ \ \right.
\]

\end{remark}

\begin{proof}
[Proof of Corollary \ref{c2.4}]Let\ $u_{0n},f_{n},M_{n}$ as in the proof of
Theorem \ref{t2.3} and $\left(  u_{\varepsilon}^{n},\eta_{\varepsilon}%
^{n}\right)  $ the strong solution of the equation (\ref{2.13}) corresponding
to $\left(  u_{0n},f_{n},M_{n}\right)  .$ Then by Proposition \ref{2.1}
$u_{\varepsilon}^{n}\rightarrow u^{n}$ in $C\left(  \left[  0,T\right]
;H\right)  $ as $\varepsilon\rightarrow0.$

Also since \ $A_{\varepsilon}^{\alpha}x-\alpha J_{\varepsilon}^{\alpha}x\in
A\left(  J_{\varepsilon}^{\alpha}x\right)  ,\;J_{\varepsilon}^{\alpha
}x=x-\varepsilon A_{\varepsilon}^{\alpha}x$ and $\left\vert J_{\varepsilon
}^{\alpha}x\right\vert \leq\left\vert x\right\vert +\left(  1+\left\vert
\varepsilon-1\right\vert \right)  \left\vert J_{1}^{\alpha}0\right\vert $ then
by (H$_{1}$-ii) we have $r_{0}\left\Vert A_{\varepsilon}^{\alpha}x-\alpha
x\right\Vert _{X^{\ast}}\leq\left(  A_{\varepsilon}^{\alpha}x-\alpha
x,x-h_{0}\right)  +b_{1}\left\vert x\right\vert ^{2}+b_{2},$ where
\ $b_{i}=b_{i}\left(  \alpha,h_{0},a_{1},a_{2}\right)  >0,\;\;i=1,2.$

Since $\left(  u_{\varepsilon},\eta_{\varepsilon}\right)  =GD\left(
A_{\varepsilon}^{\alpha}-\alpha I;u_{0},f,M\right)  $ and $\left(
u_{\varepsilon}^{n},\eta_{\varepsilon}^{n}\right)  =GD\left(  A_{\varepsilon
}^{\alpha}-\alpha I;u_{0n},f_{n},M_{n}\right)  $, then by (\ref{2.7}) and
(\ref{2.8})%
\[%
\begin{tabular}
[c]{l}%
$\left\Vert u_{\mathbb{\varepsilon}}\right\Vert _{C\left(  \left[  0,T\right]
;H\right)  }^{2}+\left\Vert \eta_{\varepsilon}\right\Vert _{BV\left(  \left[
0,T\right]  ;X^{\ast}\right)  }\leq C_{3}[1+\left\vert u_{0}\right\vert
^{2}+\left\vert f\right\vert _{L^{1}\left(  0,T;H\right)  }^{2}+\left\Vert
M\right\Vert _{C\left(  \left[  0,T\right]  ;H\right)  }^{2}],$\medskip\\
$\left\Vert u_{\mathbb{\varepsilon}}^{n}\right\Vert _{C\left(  \left[
0,T\right]  ;H\right)  }^{2}+\left\Vert \eta_{\varepsilon}^{n}\right\Vert
_{BV\left(  \left[  0,T\right]  ;X^{\ast}\right)  }\leq C_{3}[1+\left\vert
u_{on}\right\vert ^{2}+\left\vert f_{n}\right\vert _{L^{1}\left(
0,T;H\right)  }^{2}+\left\Vert M_{n}\right\Vert _{C\left(  \left[  0,T\right]
;H\right)  }^{2}]$%
\end{tabular}
\
\]
and%
\[%
\begin{tabular}
[c]{l}%
$\left\vert u_{\varepsilon}-u_{\varepsilon}^{n}\right\vert _{C\left(  \left[
0,T\right]  ;H\right)  }^{2}\leq C_{4}[\left\vert u_{0}-u_{0n}\right\vert
^{2}+\left\Vert f-f_{n}\right\Vert _{L^{1}\left(  0,T;H\right)  }^{2}$%
\medskip\\
$\quad+\left\Vert M-M_{n}\right\Vert _{C\left(  \left[  0,T\right]  ;H\right)
}^{2}+\left\Vert M-M_{n}\right\Vert _{C\left(  \left[  0,T\right]  ;X\right)
}]=\alpha_{n}$%
\end{tabular}
\
\]
where $C_{3},C_{4}$ are constants independent of $\varepsilon$ and $n,$ and
$\lim_{n\rightarrow\infty}\alpha_{n}=0.$ Also from the proof of Theorem
\ref{t2.3} we have%
\[
\left\Vert u-u_{n}\right\Vert _{C\left(  \left[  0,T\right]  ;H\right)  }%
^{2}\leq\alpha_{n}%
\]
Now from%
\[
\left\Vert u_{\varepsilon}-u\right\Vert _{C\left(  \left[  0,T\right]
;H\right)  }^{2}\leq3[\left\Vert u_{\varepsilon}-u_{\varepsilon}%
^{n}\right\Vert _{C\left(  \left[  0,T\right]  ;H\right)  }^{2}+\left\Vert
u_{\varepsilon}^{n}-u_{n}\right\Vert _{C\left(  \left[  0,T\right]  ;H\right)
}^{2}+\left\Vert u_{n}-u\right\Vert _{C\left(  \left[  0,T\right]  ;H\right)
}^{2}]
\]
we have%
\[
\limsup_{\varepsilon\rightarrow0}\left\Vert u_{\varepsilon}-u\right\Vert
_{C\left(  \left[  0,T\right]  ;H\right)  }\leq6\alpha_{n}\quad\text{for all
}n\in\mathbb{N}^{\ast}%
\]
and the conclusions of Corollary \ref{c2.4} follows easily.\hfill
\end{proof}

\begin{corollary}
\label{c2.5}If we substitute the assumption (\ref{2.6}-i) of Theorem
\ref{t2.3} by%
\begin{equation}
\text{\textit{operator }}A\;\text{\textit{satisfies }(}H_{1}\text{-III),}
\label{2.14}%
\end{equation}
then $\left(  u,n\right)  =GD\left(  A;u_{0},f,M\right)  $ if and only if
$\left(  u,\eta\right)  $ is the solution of the following problem:%
\begin{equation}%
\begin{tabular}
[c]{l}%
$i)\quad u\in C\left(  \left[  0,T\right]  ;H\right)  ,\;\;u\left(  t\right)
\in\overline{D\left(  A\right)  }\;\;\forall t\in\left[  0,T\right]
,\;u\left(  0\right)  =u_{0},$\medskip\\
$ii)\quad\eta\in C\left(  \left[  0,T\right]  ;H\right)  \cap BV\left(
\left[  0,T\right]  ;H\right)  ,\eta\left(  0\right)  =0,$\medskip\\
$\displaystyle iii)\quad u\left(  t\right)  +\eta\left(  t\right)  =u_{0}%
+\int\nolimits_{0}^{t}f\left(  s\right)  ds+M\left(  t\right)  ,\;\forall
t\in\left[  0,T\right]  ,$\medskip\\
$\displaystyle iv)\quad\int\nolimits_{s}^{t}\left(  u\left(  \tau\right)
-x,d\eta\left(  \tau\right)  -yd\tau\right)  +\alpha\int\nolimits_{s}%
^{t}\left\vert u\left(  \tau\right)  -x\right\vert ^{2}d\tau\geq0$\medskip\\
\quad\quad\quad\quad\quad$\forall0\leq s\leq t\leq T,$ and $\forall\left[
x,y\right]  \in A$%
\end{tabular}
\ \label{2.15}%
\end{equation}

\end{corollary}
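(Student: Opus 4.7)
The plan is to establish the two implications separately; both rest on the $\alpha$-monotonicity of $A$ and on passage to the limit in Stieltjes integrals, which is manageable here because under (H$_1$-III) we have $X=H$, so the $BV$-bound on $\eta_n$ provided by Theorem \ref{t2.3} is a bound in $BV([0,T];H)$.

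For the direction $GD\Rightarrow(\ref{2.15})$, let $(u_n,\eta_n)=GD(A;u_{0n},f_n,M_n)$ be the strong approximating solutions from Definition \ref{d2.1}. Because $\eta_n'(\tau)\in Au_n(\tau)$ a.e., the $\alpha$-monotonicity of $A$ yields
$(u_n(\tau)-x,\eta_n'(\tau)-y)+\alpha|u_n(\tau)-x|^2\ge 0$ a.e., for every fixed $[x,y]\in A$. Integrating on $[s,t]$ gives the inequality (\ref{2.15}-iv) for $(u_n,\eta_n)$. Since $u_n\to u$ and $\eta_n\to\eta$ in $C([0,T];H)$ with $\|\eta_n\|_{BV([0,T];H)}\le C$, I split $\int_s^t(u_n-x,d\eta_n)$ as $\int_s^t(u_n-u,d\eta_n)+\int_s^t(u-x,d\eta_n)$; the first summand is bounded by $\|u_n-u\|_\infty\cdot C\to 0$, and the second converges to $\int_s^t(u-x,d\eta)$ by approximating the continuous function $u-x$ uniformly with step functions and using uniform convergence $\eta_n\to\eta$. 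The $\alpha$-term converges trivially, which gives (\ref{2.15}-iv) for $(u,\eta)$; the remaining conditions (\ref{2.15}-i--iii) are immediate from Definition \ref{d2.1} (with $\eta\in BV([0,T];H)$ because $X=H$).

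For the converse direction $(\ref{2.15})\Rightarrow GD$, let $(u^*,\eta^*)=GD(A;u_0,f,M)$, which by the previous direction also satisfies (\ref{2.15}); I will show $u\equiv u^*$. I use the Yosida approximation $(u_\varepsilon,\eta_\varepsilon)$ from Corollary \ref{c2.4}; the crucial pointwise fact is $[J_\varepsilon^\alpha u_\varepsilon(\sigma),\,A_\varepsilon^\alpha u_\varepsilon(\sigma)-\alpha J_\varepsilon^\alpha u_\varepsilon(\sigma)]\in A$. For any partition $s=\sigma_0<\cdots<\sigma_N=t$, apply (\ref{2.15}-iv) on each $[\sigma_i,\sigma_{i+1}]$ with the constant test pair taken at $\sigma_i$, sum, and let the mesh tend to zero; continuity of $\tau\mapsto u_\varepsilon(\tau)$ yields the time-varying inequality
\[
\int_s^t\!\bigl(u-J_\varepsilon^\alpha u_\varepsilon,\,d\eta-(A_\varepsilon^\alpha u_\varepsilon-\alpha J_\varepsilon^\alpha u_\varepsilon)d\tau\bigr)+\alpha\!\int_s^t\!|u-J_\varepsilon^\alpha u_\varepsilon|^2d\tau\ge 0.
\]
Then I let $\varepsilon\to 0$: Corollary \ref{c2.4} gives $u_\varepsilon\to u^*$ and $\eta_\varepsilon\to\eta^*$ in $C([0,T];H)$ with $\eta_\varepsilon\to\eta^*$ weak$^*$ in $BV([0,T];H)$, while $J_\varepsilon^\alpha u_\varepsilon\to u^*$ uniformly (since $J_\varepsilon^\alpha$ is $1$-Lipschitz and $J_\varepsilon^\alpha\to I$ pointwise on $\overline{D(A)}$, hence uniformly on the compact set $u^*([0,T])$). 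The identity $A_\varepsilon^\alpha u_\varepsilon-\alpha J_\varepsilon^\alpha u_\varepsilon=\eta_\varepsilon'+\alpha\varepsilon A_\varepsilon^\alpha u_\varepsilon$ and the uniform $L^1$-bound on $A_\varepsilon^\alpha u_\varepsilon$ (coming from $\|\eta_\varepsilon\|_{BV}\le C$) make the $\alpha\varepsilon$-term disappear, and the limit reduces to
\[
\int_s^t\bigl(u-u^*,d(\eta-\eta^*)\bigr)+\alpha\int_s^t|u-u^*|^2\,d\tau\ge 0.
\]
Finally, (\ref{2.15}-iii) applied to both pairs gives $\eta-\eta^*=-(u-u^*)$, a continuous function of bounded variation; the Lebesgue--Stieltjes integration by parts identity $\int_s^t(v,dv)=\tfrac12(|v(t)|^2-|v(s)|^2)$ for such $v$ transforms the inequality into $|u(t)-u^*(t)|^2\le |u(s)-u^*(s)|^2+2\alpha\int_s^t|u-u^*|^2d\tau$. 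Since $u(0)=u^*(0)=u_0$, Gronwall's lemma gives $u\equiv u^*$ and consequently $\eta\equiv\eta^*$.

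The main obstacle is the reverse direction: converting (\ref{2.15}-iv), which is stated for a single pair $[x,y]$, into the time-varying integrated form with test pair $[J_\varepsilon^\alpha u_\varepsilon(\tau),A_\varepsilon^\alpha u_\varepsilon(\tau)-\alpha J_\varepsilon^\alpha u_\varepsilon(\tau)]$, and then justifying the $\varepsilon\to 0$ passage of the Stieltjes integrals against $d\eta_\varepsilon$ (which requires the weak$^*$ BV convergence together with uniform convergence of $J_\varepsilon^\alpha u_\varepsilon$ to $u^*$). Once these limits are in hand the $\alpha$-monotonicity/Gronwall conclusion is routine.
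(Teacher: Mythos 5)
Your proposal is correct, and while your forward implication coincides with the paper's (monotonicity of $A+\alpha I$ applied to the strong approximants $(u_n,\eta_n)$, then passage to the limit in the Stieltjes integrals using the uniform $BV$ bound, which under (H$_1$-III) lives in $BV([0,T];H)$ since $X=H$), your converse takes a genuinely different route in the key uniqueness step. The paper proves uniqueness \emph{intrinsically} for problem (\ref{2.15}): it extends (\ref{2.15}-iv) from constant pairs to step functions and then to continuous selections $[a(\tau),b(\tau)]\in A$ (exactly as your partition argument does), but then tests symmetrically with the Yosida regularization of the \emph{midpoint}, $a=J_{\varepsilon}^{\alpha}\bigl(\frac{u_1+u_2}{2}\bigr)$, $b=A_{\varepsilon}^{\alpha}\bigl(\frac{u_1+u_2}{2}\bigr)-\alpha J_{\varepsilon}^{\alpha}\bigl(\frac{u_1+u_2}{2}\bigr)$, and adds the two inequalities; there the unbounded term $A_{\varepsilon}^{\alpha}w$ is neutralized by the algebraic cancellation $u_1+u_2-2J_{\varepsilon}^{\alpha}w=2\varepsilon A_{\varepsilon}^{\alpha}w$, together with convexity of $\overline{D(A)}$ (so that $J_{\varepsilon}^{\alpha}w\to w$ uniformly). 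You instead anchor the comparison to the GD solution $(u^{*},\eta^{*})$ and test with the moving pair $[J_{\varepsilon}^{\alpha}u_{\varepsilon},A_{\varepsilon}^{\alpha}u_{\varepsilon}-\alpha J_{\varepsilon}^{\alpha}u_{\varepsilon}]$ along the penalized trajectory of Corollary \ref{c2.4}, disposing of the remainder $\alpha\varepsilon A_{\varepsilon}^{\alpha}u_{\varepsilon}$ via the uniform $L^1$ bound inherited from the $BV$ estimate (\ref{2.8}); both arguments then finish identically, through $\eta-\eta^{*}=-(u-u^{*})$, the identity $\int_s^t(v,dv)=\frac12\bigl(|v(t)|^2-|v(s)|^2\bigr)$ for $v$ continuous of bounded variation, and Gronwall. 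The trade-off: the paper's midpoint trick yields uniqueness for (\ref{2.15}) independently of the existence theory and needs no a priori control of the approximating $\eta_{\varepsilon}$, whereas your version re-uses machinery already established (the convergences of Corollary \ref{c2.4} plus the uniform $BV$ bound) and avoids the midpoint cancellation entirely, at the price of proving only that every solution of (\ref{2.15}) coincides with the GD solution rather than internal uniqueness of (\ref{2.15}) --- which, since Theorem \ref{t2.3} guarantees existence, is exactly what the corollary requires. Your limit passages (uniform convergence of $J_{\varepsilon}^{\alpha}u_{\varepsilon}$ to $u^{*}$ via equi-Lipschitzness and pointwise convergence on the compact range, and convergence of $\int g_{\varepsilon}\,d\eta_{\varepsilon}$ under uniform convergence of integrands and integrators with uniformly bounded variation) are all sound.
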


\begin{proof}
If $\left(  u,\eta\right)  =GD\left(  A;u_{0},f,M\right)  $ then $\left(
u,\eta\right)  $ satisfies (\ref{2.15}-i,ii,iii). Also, since $\eta
_{n}^{\prime}+\alpha u_{n}\in\left(  A+\alpha I\right)  (u_{n})\,\;$and
$y+\alpha x\in\left(  A+\alpha I\right)  \left(  x\right)  $ for $\left[
x,y\right]  \in A,$ then by monotony of $A+\alpha I$ the inequality
(\ref{2.15}-iv) follows as $n\rightarrow\infty.$

To finish the proof of Corollary \ref{2.3} we have to prove only the
uniqueness of the solution of the problem (\ref{2.15}). It is clearly that
(\ref{2.15}-iv) gives%
\begin{equation}
\int\nolimits_{s}^{t}\left(  u\left(  \tau\right)  -a\left(  \tau\right)
,d\eta\left(  \tau\right)  -b\left(  \tau\right)  d\tau\right)  +\alpha
\int\nolimits_{s}^{t}\left\vert u\left(  \tau\right)  -a\left(  \tau\right)
\right\vert ^{2}d\tau\geq0 \label{2.16}%
\end{equation}
for all $a\left(  \tau\right)  ,b\left(  \tau\right)  $ step functions on
$\left[  0,T\right]  $ such that $\left[  a\left(  \tau\right)  ,b\left(
\tau\right)  \right]  \in A$, $\forall\tau\in\left[  0,T\right]  $, and then
for all $a,b\in C\left(  \left[  0,T\right]  ;H\right)  $, $[a\left(
\tau\right)  $,$b\left(  \tau\right)  ]\in A\;\forall\tau\in\left[
0,T\right]  $. Now the uniqueness follows by a standard argument. We write
(\ref{2.16}) for $u=u_{1}$ and $u=u_{2},$ $\;a=J_{\varepsilon}^{\alpha}\left(
\frac{u_{1}+u_{2}}{2}\right)  ,\;b=A_{\varepsilon}^{\alpha}\left(  \frac
{u_{1}+u_{2}}{2}\right)  -\alpha J_{\varepsilon}^{\alpha}\left(  \frac
{u_{1}+u_{2}}{2}\right)  $; by addition of the two inequalities and by passing
to limit for $\varepsilon\rightarrow0$ the following inequality follows%
\begin{equation}%
\begin{tabular}
[c]{r}%
$\displaystyle\frac{1}{2}\int\nolimits_{s}^{t}\left(  u_{1}\left(
\tau\right)  -u_{2}\left(  \tau\right)  ,d\eta_{1}\left(  \tau\right)
-d\eta_{2}\left(  \tau\right)  \right)  +\alpha\int\nolimits_{s}^{t}\left\vert
u_{1}\left(  \tau\right)  -u_{2}\left(  \tau\right)  \right\vert ^{2}d\tau
\geq0,$\medskip\\
for all $0\leq s\leq t\leq T.$%
\end{tabular}
\ \label{2.17}%
\end{equation}
From (\ref{2.17}-iii) we have $\;u_{1}\left(  t\right)  -u_{2}\left(
t\right)  =\eta_{2}\left(  t\right)  -\eta_{1}\left(  t\right)  .$ \ Hence by
(\ref{2.17}):%
\[
\left\vert u_{1}\left(  t\right)  -u_{2}\left(  t\right)  \right\vert ^{2}%
\leq4\alpha\int\nolimits_{0}^{t}\left\vert u_{1}\left(  \tau\right)
-u_{2}\left(  \tau\right)  \right\vert ^{2}d\tau\;\;\forall t\in\left[
0,T\right]  ,
\]
which implies $u_{1}=u_{2}.$\hfill
\end{proof}

\begin{corollary}
\label{c2.6} If we substitute the assumption (\ref{2.6}-i) of Theorem
\ref{t2.3} by
\begin{equation}
\text{\textit{operator }}A\text{\textit{\ satisfies (}}H_{1}%
-\text{II\textbf{),}} \label{2.18}%
\end{equation}
then moreover the generalized solution $u\in L^{2}\left(  0,T;V\right)  .$
\end{corollary}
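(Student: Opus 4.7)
The plan is to apply the strong monotonicity condition (H$_1$-II-c) along the approximating sequence $(u_n,\eta_n)$ built in the proof of Theorem \ref{t2.3}, integrate in time, and reduce every resulting term to quantities already controlled there. Fix the reference pair $[x_0,y_0]\in A$ given by $x_0=h_0+r_0 e$ and $y_0=A^0x_0\in H$, where $e\in V\cap X$ with $\|e\|_X=1$ (available by (H$_1$-II-d)). Since $[u_n(t),h_n(t)]\in A$ a.e., condition (H$_1$-II-c) yields
\[
a\,\|u_n(t)-x_0\|_V^2\leq (h_n(t)-y_0,u_n(t)-x_0)+\beta\,|u_n(t)-x_0|^2,\quad\text{a.e. }t\in(0,T).
\]
Integrating over $[0,T]$, the contributions containing $y_0$ and $\beta$ are controlled uniformly in $n$ by the $C([0,T];H)$ bound (\ref{2.12}).

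The critical quantity is $\int_0^T(h_n,u_n-x_0)\,dt$. From the approximating equation $u_n'=f_n+M_n'-h_n$ and a first integration by parts,
\[
\int_0^T(h_n,u_n-x_0)\,dt=\tfrac12|u_{0n}-x_0|^2-\tfrac12|u_n(T)-x_0|^2+\int_0^T(f_n,u_n-x_0)\,dt+\int_0^T(M_n',u_n-x_0)\,dt.
\]
The singular summand $\int_0^T(M_n',u_n-x_0)\,dt$ is then treated by integrating by parts a second time and substituting again $u_n'=f_n+M_n'-h_n$, which gives
\[
\int_0^T(M_n',u_n-x_0)\,dt=(M_n(T),u_n(T)-x_0)-\int_0^T(M_n,f_n)\,dt-\tfrac12|M_n(T)|^2+\int_0^T(M_n,d\eta_n).
\]
The Stieltjes term $\int_0^T(M_n,d\eta_n)$ is exactly the one estimated in the proof of Theorem \ref{t2.3} by the partition argument, bounded by $\tfrac{r_0}{4}\|\eta_n\|_{BV([0,T];X^*)}$ plus products of $\|M_n\|_{C([0,T];H)}$ and $\|\eta_n\|_{C([0,T];H)}$, all uniformly controlled through (\ref{2.12}). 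Collecting the pieces,
\[
\int_0^T\|u_n(t)-x_0\|_V^2\,dt\leq K,
\]
with $K$ depending only on $(A,u_0,f,M,T)$, so $\{u_n\}$ is bounded in $L^2(0,T;V)$.

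To conclude, one extracts a subsequence with $u_n\rightharpoonup\tilde u$ weakly in $L^2(0,T;V)$ (using reflexivity of $V$, which is standard in the setting of (H$_1$-II); otherwise the weak-$\ast$ argument in $L^2(0,T;V^{**})$ together with lower semicontinuity suffices). The continuous embedding $V\hookrightarrow H$ and the already-established strong convergence $u_n\to u$ in $C([0,T];H)\subset L^2(0,T;H)$ force $\tilde u=u$, hence $u\in L^2(0,T;V)$.

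The main obstacle is the treatment of the singular input $M_n'$: a naive bound is impossible since $M_n'$ blows up as the regularization is removed. The essential idea is the double integration by parts, which trades $\int_0^T(M_n',u_n-x_0)\,dt$ for the Stieltjes integral $\int_0^T(M_n,d\eta_n)$ that was precisely the one tamed in Theorem \ref{t2.3}; the remainder of the argument is bookkeeping and weak compactness.
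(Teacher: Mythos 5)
Your core estimate is correct, but your route is genuinely different from the paper's, and the difference is not cosmetic. You prove a uniform bound $\int_0^T\|u_n-x_0\|_V^2\,dt\le K$ by testing (H$_1$-II-c) against a fixed pair $[x_0,y_0]\in A$, handling the singular input by the double integration by parts that trades $\int_0^T(M_n',u_n-x_0)\,dt$ for the Stieltjes integral $\int_0^T(M_n,d\eta_n)$, which is then controlled by the partition argument and the uniform $BV$ bound (\ref{2.12}); this part of your argument is sound. The paper instead applies (H$_1$-II-c) to the \emph{difference} of two approximants, i.e.\ to the pairs $[u_n(t),\eta_n'(t)]$ and $[u_m(t),\eta_m'(t)]$, which gives
\[
a\int_0^T\|u_n-u_m\|_V^2\,dt\le\int_0^T\left(\eta_n'-\eta_m',u_n-u_m\right)dt+\beta\int_0^T|u_n-u_m|^2\,dt,
\]
and the right-hand side tends to $0$ by exactly the computation already done for uniqueness in Theorem \ref{t2.3}: the term $\int_0^T(M_n-M_m,d\eta_n-d\eta_m)$ is bounded by $\|M_n-M_m\|_{C([0,T];X)}\|\eta_n-\eta_m\|_{BV([0,T];X^*)}$ with the $BV$ norms uniformly bounded, and $u_n\to u$ in $C([0,T];H)$. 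Hence $\{u_n\}$ is Cauchy in $L^2(0,T;V)$ and converges \emph{strongly} there, with limit $u$. Your version buys an explicit a priori bound on $\|u\|_{L^2(0,T;V)}$ in terms of the data, which the Cauchy argument does not display; the paper's version buys a one-line proof that needs no compactness whatsoever.

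The compactness step is where you have a genuine gap under the paper's stated hypotheses. (H$_1$-II-a) assumes only that $V$ is a separable Banach space; reflexivity is \emph{not} assumed there (it appears only in the Pardoux setting of Subsection 3.1). You flag this, but your proposed fallback does not repair it: a bounded sequence in $L^2(0,T;V)$ has weak-$\ast$ limit points only in a $V^{**}$-valued (or still more general dual) space, and since $V$ is weak-$\ast$ dense in $V^{**}$ by Goldstine's theorem but weak-$\ast$ closed only when $V$ is reflexive, lower semicontinuity alone cannot force the limit to be $V$-valued (compare $V=C([0,1])\subset H=L^2(0,1)$: a $C$-bounded sequence can converge in $L^2$ to a discontinuous function). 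So your argument proves the corollary when $V$ is reflexive — the typical application — but not in the stated generality, whereas the Cauchy-sequence device works in any Banach $V$ and even upgrades the conclusion to strong convergence $u_n\to u$ in $L^2(0,T;V)$.
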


\begin{proof}
By the assumption (H$_{1}$-II-c) we have that $u_{n}$ is a Cauchy sequence in
$L^{2}\left(  0,T;V\right)  .$\hfill
\end{proof}

\subsection{An example}

Let $D$ be an open bounded subset of $\mathbb{R}^{d}$with a sufficiently
smooth boundary $\Gamma,$ and let $\beta\subset\mathbb{R}\times\mathbb{R}$ be
a maximal monotone graph or equivalent $\beta=\partial j,$ where
$j:\mathbb{R}\rightarrow\left]  -\infty,+\infty\right]  $ is a convex
lower-semicontinuous function. We assume that $\exists\,b_{0}\in
\mathbb{R},\;\exists\,\varepsilon_{0}>0$ such that $\left[  b_{0}%
-\varepsilon_{0},b_{0}+\varepsilon_{0}\right]  \subset$\ Dom$\left(  j\right)
.$ Also let $g:\mathbb{R}\rightarrow\mathbb{R}$ be a continuous function such
that
\[%
\begin{tabular}
[c]{l}%
$\left(  g\left(  r\right)  -g\left(  q\right)  \right)  \left(  r-q\right)
+\alpha\left\vert r-q\right\vert ^{2}\geq0\;\;\forall r,q\in R$\medskip\\
$\left\vert g\left(  r\right)  \right\vert \leq b\left(  1+\left\vert
r\right\vert \right)  $%
\end{tabular}
\ \
\]
($\alpha,b$ are some given positive constants). Consider the following
problem
\begin{equation}
\left\{
\begin{tabular}
[c]{l}%
$du\left(  t\right)  -\Delta u\left(  t\right)  dt+g\left(  u\left(  t\right)
\right)  dt=f\left(  t\right)  dt+dM\left(  t\right)  $ on $\left]
0,T\right[  \times D,$\medskip\\
$-\dfrac{\partial u\left(  t,x\right)  }{\partial n}\in\beta\left(  u\left(
t,x\right)  \right)  $ on $\left]  0,T\right[  \times\Gamma,$\medskip\\
$u\left(  0,x\right)  =u_{0}\left(  x\right)  ,$ on $D$%
\end{tabular}
\ \ \right.  \label{2.19}%
\end{equation}
or equivalent%
\[
\left\{
\begin{tabular}
[c]{l}%
$du\left(  t\right)  +\partial\varphi\left(  u\left(  t\right)  \right)
\left(  dt\right)  +g\left(  u\left(  t\right)  \right)  dt\ni f\left(
t\right)  dt+dM\left(  t\right)  $\medskip\\
$u\left(  0\right)  =u_{0}$%
\end{tabular}
\ \ \right.
\]
where \ $\varphi:H=L^{2}\left(  D\right)  \rightarrow\left]  -\infty
,+\infty\right]  $ \ is the convex l.s.c. function given by
\[
\varphi\left(  u\right)  =\left\{
\begin{tabular}
[c]{l}%
$\displaystyle\frac{1}{2}\int\nolimits_{D}\left\vert \text{grad }u\right\vert
^{2}dx+\int\nolimits_{\Gamma}j\left(  u\right)  d\sigma,$ if $u\in
H^{1}\left(  D\right)  ,\;j\left(  u\right)  \in L^{1}\left(  \Gamma\right)
$\medskip\\
$+\infty,$ otherwise
\end{tabular}
\ \ \right.
\]
The assumptions from (H$_{1}$-I) are satisfied for $\left(  A_{0}u\right)
\left(  x\right)  =g\left(  u\left(  x\right)  \right)  $ and the Sobolev
space $H^{k}\left(  D\right)  =X,\;k>\frac{d}{2}.$ Hence if $f\in L^{1}\left(
0,T;L^{2}\left(  D\right)  \right)  $, $u_{0}\in H^{1}\left(  D\right)  $,
$j\left(  u_{0}\right)  \in L^{1}\left(  \Gamma\right)  $, $M\in C\left(
\left[  0,T\right]  ;H^{k}\left(  D\right)  \right)  $ then the equation
(\ref{2.19}) has a unique generalized solution $u\in C\left(  \left[
0,T\right]  ;L^{2}\left(  D\right)  \right)  $ in the sense of Definition
\ref{d2.1}.

\section{Stochastic evolution equations}

\subsection{Preliminaries}

\textbf{A.} We assume as given a filtered complete probability space
$(\Omega,\mathcal{F},\mathbb{P},\left\{  \mathcal{F}_{t}\right\}  _{t\geq0})$
which satisfy the usual hypotheses i.e. $\left(  \Omega,\mathcal{F}%
,\mathbb{P}\right)  $ is a complete probability space and $\left\{
\mathcal{F}_{t},t\geq0\right\}  $ is a increasing right continuous
sub-$\sigma$-algebras of $\mathcal{F}$. We shall say that $(\Omega
,\mathcal{F},\mathbb{P},\left\{  \mathcal{F}_{t}\right\}  _{t\geq0})$. is a
stochastic base.

If $Y$ is a real separable Banach space we denote by $L_{ad}^{r}\left(
\Omega;C\left(  \left[  0,T\right]  ;Y\right)  \right)  $, $r\geq0$, the
closed linear subspace (Banach space for $r\in\left[  1,\infty\right)  $ and
metric space for $0\leq r<1;$ the metric of convergence in probability for
$r=0$) of adapted stochastic processes $f\in L^{r}\left(  \Omega
,\mathcal{F},\mathbb{P};C\left(  \left[  0,T\right]  ;Y\right)  \right)  .$
Similarly $L_{ad}^{r}\left(  \Omega;L^{q}\left(  0,T;X\right)  \right)  $,
$r\geq0$, $q\in\lbrack1,\infty)$, denoted the Banach space for $r\geq1$ or the
metric space for $0\leq r<1$ of measurable stochastic processes $f\in
L^{r}\left(  \Omega;L^{q}\left(  0,T;Y\right)  \right)  $ such that $f\left(
\cdot,t\right)  $ is $\mathcal{F}_{t}$-measurable a.e. $t\in\left(
0,T\right)  $.

\textbf{B.} If $H$ is a real separable Hilbert space we denote by
$\mathcal{M}^{p}\left(  0,T;H\right)  $, $p\in\left[  1,\infty\right)  $, the
space of continuous $p$-martingales $M$, that is
\begin{equation}%
\begin{array}
[c]{rl}%
\left(  i\right)  & M\in L_{ad}^{0}\left(  \Omega;C\left(  \left[  0,T\right]
;H\right)  \right)  ,\medskip\\
\left(  ii\right)  & \mathbb{E}\left\vert M_{t}\right\vert ^{p}<\infty
,\quad\text{for all }t\geq0,\medskip\\
\left(  iii\right)  & M\left(  \omega,0\right)  =0,~a.s.~\omega\in
\Omega,\medskip\\
\left(  iv\right)  & \mathbb{E}\left(  M\left(  t\right)  |\mathcal{F}%
_{s}\right)  =M\left(  s\right)  ,~a.s.,~\text{if\ }0\leq s\leq t\leq T.
\end{array}
\label{3.1}%
\end{equation}
If $M\in\mathcal{M}^{2}\left(  0,T;H\right)  $ then by Doob-Meyer
decomposition, there exists a unique stochastic process $\left\langle
M\right\rangle \in L_{ad}^{1}\left(  \Omega;C\left(  \left[  0,T\right]
;R\right)  \right)  $ such that%
\begin{equation}%
\begin{tabular}
[c]{l}%
$i)\quad t\longmapsto\left\langle M\right\rangle \left(  \omega,t\right)  $ is
increasing $\mathbb{P}$-a.s.,$\medskip$\\
$ii)\quad\left\vert M\right\vert _{H}^{2}-\left\langle M\right\rangle
\in\mathcal{M}^{1}\left(  0,T;R\right)  .$%
\end{tabular}
\ \ \label{3.2}%
\end{equation}
Moreover%
\begin{equation}%
\begin{tabular}
[c]{l}%
$a)\quad\mathbb{E}\sup\limits_{s\in\left[  0,T\right]  }\left\vert M\left(
s\right)  \right\vert \leq3\mathbb{E}\sqrt{\left\langle M\right\rangle \left(
T\right)  },\medskip$\\
$b)\quad\mathcal{M}^{p}\left(  0,T;H\right)  $ is a closed linear subspace
of$\medskip$\\
\quad\quad\quad$L_{ad}^{p}\left(  \Omega;C\left(  \left[  0,T\right]
;H\right)  \right)  ,\;$for all $p\in(1,\infty).$%
\end{tabular}
\ \ \label{3.3}%
\end{equation}
On $\mathcal{M}^{p}\left(  0,T;H\right)  ,\,p>1$, it is defined the norm
$\left\Vert M\right\Vert _{\mathcal{M}^{p}}=\left(  \mathbb{E}\left\vert
M\left(  T\right)  \right\vert _{H}^{p}\right)  ^{1/p}$ ; in the case $p>1$
this norm is equivalent on $M^{p}\left(  0,T;H\right)  $ to usual norm from
$L^{p}\left(  \Omega;C\left(  \left[  0,T\right]  ;H\right)  \right)  $. The
space $\left(  \mathcal{M}^{2}\left(  0,T;H\right)  ,\left\Vert \cdot
\right\Vert _{\mathcal{M}^{2}}\right)  $ is a Hilbert space.

For any $f\in L_{ad}^{r}\left(  \Omega;C\left(  \left[  0,T\right]  ;H\right)
\right)  $, $0\leq r<\infty$ and $M\in\mathcal{M}^{2}\left(  0,T;H\right)  $
the stochastic integral $I\left(  f;M\right)  \left(  t\right)
=\displaystyle\int\nolimits_{0}^{t}\left(  f\left(  s\right)  ,dM\left(
s\right)  \right)  $ is well defined and has the properties:%
\begin{equation}%
\begin{tabular}
[c]{l}%
$a)\quad I:L_{ad}^{2}\left(  \Omega;C\left(  \left[  0,T\right]  ;H\right)
\right)  \times\mathcal{M}^{2}\left(  0,T;H\right)  \rightarrow\mathcal{M}%
^{1}\left(  0,T;\mathbb{R}\right)  \medskip$\\
\quad\quad is continuous, and also$\medskip$\\
\quad\quad$I:L_{ad}^{0}\left(  \Omega;C\left(  \left[  0,T\right]  ;H\right)
\right)  \times\mathcal{M}^{2}\left(  0,T;H\right)  \rightarrow L_{ad}%
^{0}\left(  \Omega;C\left(  \left[  0,T\right]  ;\mathbb{R}\right)  \right)
\medskip$\\
$\quad$\quad is continuous;$\medskip$\\
$b)\quad\left\vert M\left(  t\right)  \right\vert ^{2}=2\displaystyle\int%
\nolimits_{0}^{t}\left(  M\left(  t\right)  ,dM\left(  t\right)  \right)
+\left\langle M\right\rangle \left(  t\right)  ,\;\forall t\in\left[
0,T\right]  ,$ a.s. $\omega\in\Omega;\medskip$\\
$c)\quad$if $f\in L_{ad}^{2}\left(  \Omega;C\left(  \left[  0,T\right]
;H\right)  \right)  $ then$\medskip$\\
\quad\quad$\displaystyle\mathbb{E}\sup\limits_{t\in\left[  0,T\right]
}\left\vert \int\nolimits_{0}^{t}\left(  f\left(  s\right)  ,dM\left(
s\right)  \right)  \right\vert \leq3\mathbb{E}\left[  \sup\limits_{t\in\left[
0,T\right]  }\left\vert f\left(  t\right)  \right\vert _{H}\sqrt{\left\langle
M\right\rangle \left(  T\right)  }\right]  \medskip$\\
$d)\quad$if $\quad0=t_{0}<t_{1}<...<t_{n}=T,\;\delta_{n}=\max
\limits_{i=\overline{0,n-1}}\left\vert t_{i+1}-t_{i}\right\vert $ \quad
and$\medskip$\\
\quad\quad$\displaystyle I_{n}\left(  f\right)  \left(  t\right)
=\sum\limits_{i=0}^{n-1}\left(  f\left(  t_{i}\right)  ,M\left(  t_{i+1}\wedge
t\right)  -M\left(  t_{i}\wedge t\right)  \right)  $ \quad then$\medskip$\\
\quad\quad$I\left(  f;M\right)  =\lim\limits_{\delta_{n}\rightarrow0}%
I_{n}\left(  f\right)  \left\{
\begin{tabular}
[c]{l}%
in $\mathcal{M}^{1}\left(  0,T;\mathbb{R}\right)  $, \ for$\medskip$\\
$\,\,\quad f\in L_{ad}^{2}\left(  \Omega;C\left(  \left[  0,T\right]
;H\right)  \right)  \medskip$\\
in$\;L_{ad}^{0}\left(  \Omega;C\left(  \left[  0,T\right]  ;\mathbb{R}\right)
\right)  ,$ \quad for$\medskip$\\
$\quad f\in L_{ad}^{0}\left(  \Omega;C\left(  \left[  0,T\right]  ;H\right)
\right)  $%
\end{tabular}
\ \ \ \ \ \right.  $%
\end{tabular}
\ \ \ \ \label{3.4}%
\end{equation}
\textbf{C.} Let $\left(  U\;\left\vert \cdot\right\vert _{U}\right)  $ be a
real separable Hilbert space and $Q:U\rightarrow U$ be a linear operator. We
shall assume%
\begin{equation}%
\begin{tabular}
[c]{l}%
$i)\quad Qu=\sum\limits_{i}\lambda_{i}\left(  u,e_{i}\right)  _{U}e_{i},$
where\medskip\\
\quad\quad$\lambda_{i}\geq0,\;\sum\limits_{i}\lambda_{i}<\infty,\;\;\left\{
e_{i}\right\}  $ orthonormal basis in $U,$\medskip\\
$ii)\quad W\in\mathcal{M}^{2}\left(  0,T;U\right)  $ is a $U$-valued Wiener
process with\medskip\\
\quad\quad covariance operator $Q.$%
\end{tabular}
\ \ \ \ \label{3.5}%
\end{equation}
Hence%
\begin{equation}%
\begin{tabular}
[c]{l}%
$a)\quad W\left(  \omega,t\right)  =\sum\limits_{i}\sqrt{\lambda_{i}}\beta
_{i}\left(  \omega,t\right)  e_{i},$\medskip\\
$b)\quad\beta_{i}\in L_{ad}^{2}\left(  \Omega;C\left(  \left[  0,T\right]
;\mathbb{R}\right)  \right)  $ are independent real Brownian\medskip\\
\quad\quad motions with respect to the stochastic basis $(\Omega
,\mathcal{F},\mathbb{P},\left\{  \mathcal{F}_{t}\right\}  _{t\geq0})$%
\medskip\\
$c)\quad W\in\mathcal{M}^{p}\left(  0,T;U\right)  ,\;\;\forall p\geq1.$%
\end{tabular}
\ \ \ \ \label{3.6}%
\end{equation}
To define the stochastic integral with respect to a $Q$-Wiener process one
introduces the Hilbert space $U_{0}=Q^{1/2}\left(  U\right)  $ endowed with
the inner product%
\[
\left(  u,v\right)  _{U_{0}}=\sum\limits_{\lambda_{k}\neq0}\frac{1}%
{\lambda_{k}}\left(  u,e_{k}\right)  _{U}\left(  v,e_{k}\right)  _{U}%
\]
and the space of all Hilbert-Schmidt operators $L_{0}^{2}=L$ $^{2}\left(
U_{0},H\right)  $. The space $L_{0}^{2}$ is a separable Hilbert space,
equipped with the Hilbert-Schmidt norm:%
\[
\left\vert f\right\vert _{Q}^{2}\overset{\Delta}{=}\left\Vert f\right\Vert
_{\mathcal{L}_{0}^{2}}^{2}=\sum\limits_{i}\lambda_{i}\left\vert fe_{i}%
\right\vert ^{2}=\sum\limits_{i}\left\vert fQ^{1/2}e_{i}\right\vert
^{2}=\left\Vert fQ^{1/2}\right\Vert _{HS}^{2}=tr\;fQf^{\ast}.
\]
The stochastic integral \ $I\left(  f\right)  \left(  \omega,t\right)
=\displaystyle\int\nolimits_{0}^{t}f\left(  \omega,s\right)  dW\left(
\omega,s\right)  $ is well defined for $\;f\in L_{ad}^{r}(\Omega
;L^{2}(0,T;\mathcal{L}_{0}^{2}))  $,\ $0\leq r<\infty$
and%
\begin{equation}%
\begin{tabular}
[c]{l}%
$a)\quad I\left(  f\right)  \in L_{ad}^{r}\left(  \Omega;C\left(  \left[
0,T\right]  ;H\right)  \right)  ,$\medskip\\
$b)\quad\mathbb{E}I\left(  f\right)  \left(  t\right)  =0,$ for $r\geq
1,$\medskip\\
$c)\quad\displaystyle\mathbb{E}\left\vert I\left(  f\right)  \left(  t\right)
\right\vert ^{2}=\mathbb{E}\int\nolimits_{0}^{t}\left\vert f\left(  s\right)
\right\vert _{Q}^{2}ds,$ for $r\geq2,$\medskip\\
$d)\quad$(Burkh\"{o}lder-Davis-Gundy inequality): $\forall r>0,\;\exists
~c_{r},C_{r}>0$ $\left(  \footnotemark\right)  :$\medskip\\
\quad\quad$\displaystyle c_r\mathbb{E~}\left(  \int\nolimits_0^T\left\vert
f\right\vert _Q^2ds\right)  ^r/2\leq\mathbb{E}\sup\limits_t\in\left[
0,T\right]  \left\vert I\left(  f\right)  \left(  t\right)  \right\vert ^r\leq
C_r\mathbb{E~}\left(  \int\nolimits_0^T\left\vert f\right\vert _Q^2ds\right)
^r/2,$\medskip\\
$e)\quad I\left(  f\right)  \in\mathcal{M}^r\left(  0,T;H\right)  ,$ for $%
r\in\left[  1,\infty\right)  .$
\end{tabular}
\ \ \ \ \label{3.7}%
\end{equation}
\footnotetext{$C_{2}=4$ (Doob inequality), $C_{r}\leq3$ if $0<r\leq1$and
$C_{r}\leq9\left(  2r\right)  ^{r}$ \ if $r>1.$}\textbf{D.} Let\ $X\subset
H\cong H^{\ast}\subset X^{\ast}$ be the spaces defined as in Section 2
(Subsection \ref{2.1}: Preliminaries).

\begin{proposition}
\label{p3.1} (Integration by parts)%
\[%
\begin{tabular}
[c]{l}%
$a)\quad$if $\;m\in L^{0}\left(  \Omega;C\left(  \left[  0,T\right]
;X\right)  \right)  \cap\mathcal{M}^{2}\left(  0,T;H\right)  $\medskip\\
\quad$\,\,\,\,\,\,\eta\in L_{ad}^{0}\left(  \Omega;C\left(  \left[
0,T\right]  ;H\right)  \right)  \cap L^{0}\left(  \Omega;BV\left(
0,T;X^{\ast}\right)  \right)  ,\;$\thinspace then\medskip\\
\quad\quad$\displaystyle\int\nolimits_{0}^{t}(m\left(  s\right)  ,d\eta\left(
s\right)  )=\left(  m\left(  t\right)  ,\eta\left(  t\right)  \right)
-\int\nolimits_{0}^{t}\left(  \eta\left(  s\right)  ,dm\left(  s\right)
\right)  $\medskip\\
\quad\quad\quad\quad\quad\quad\quad\quad\quad for all\ \ $t\in\left[
0,T\right]  ,$\ a.s.\ $\omega\in\Omega.$\medskip\\
$b)\quad$Moreover if\medskip\\
\quad\quad$\displaystyle u\left(  t\right)  +\eta\left(  t\right)  =u_{0}%
+\int\nolimits_{0}^{t}f\left(  s\right)  ds+m\left(  t\right)  ,\forall
t\in\left[  0,T\right]  ,$a.s.,\medskip\\
\quad\quad where\quad\quad$u\in L_{ad}^{0}\left(  \Omega;C\left(  \left[
0,T\right]  ;H\right)  \right)  ,$\medskip\\
\quad\quad$u_{0}\in L^{0}\left(  \Omega,\mathcal{F}_{0},\mathbb{P};H\right)
,\,\,f\in L_{ad}^{0}\left(  \Omega;L^{1}\left(  0,T;H\right)  \right)  $%
\end{tabular}
\]
then%
\begin{equation}%
\begin{tabular}
[c]{l}%
$\displaystyle\left\vert u\left(  t\right)  -m\left(  t\right)  \right\vert
^{2}=\left\vert u\left(  t\right)  \right\vert ^{2}-2\int\nolimits_{0}%
^{t}\left(  u\left(  s\right)  ,dm\left(  s\right)  \right)  -2\int%
\nolimits_{0}^{t}\left(  m\left(  s\right)  ,f\left(  s\right)  \right)
ds+$\medskip\\
$\;\;\displaystyle+2\int\nolimits_{0}^{t}\left(  m\left(  s\right)
,d\eta\left(  s\right)  \right)  -\left\langle m\right\rangle \left(
t\right)  $\ \ for all $\;t\in\left[  0,T\right]  ,$\ a.s. $\omega\in\Omega.$%
\end{tabular}
\ \label{3.8}%
\end{equation}

\end{proposition}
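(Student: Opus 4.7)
For part (a), the strategy is pathwise discretization combined with summation by parts. Fix $\omega$ in a set of full measure and a partition $0 = t_0 < t_1 < \cdots < t_n = t$ of mesh $\delta_n \to 0$. The elementary telescoping identity
\[
\sum_{i=0}^{n-1}\bigl(m(t_i),\eta(t_{i+1})-\eta(t_i)\bigr)+\sum_{i=0}^{n-1}\bigl(\eta(t_{i+1}),m(t_{i+1})-m(t_i)\bigr)=\bigl(m(t),\eta(t)\bigr)-\bigl(m(0),\eta(0)\bigr)
\]
holds in the $X^\ast$--$X$ duality. The first sum converges pathwise to the Riemann--Stieltjes integral $\int_0^t(m,d\eta)$ because $\eta\in BV([0,T];X^\ast)$ and $m\in C([0,T];X)$. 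For the second sum I would split $\eta(t_{i+1})=\eta(t_i)+(\eta(t_{i+1})-\eta(t_i))$: the left-endpoint part is precisely the It\^o Riemann approximation $I_n(\eta)$ from (\ref{3.4}-d) and converges in $L^0_{ad}(\Omega;C([0,T];\mathbb{R}))$ to $\int_0^t(\eta,dm)$, while the remainder is bounded by
\[
\Bigl|\sum_i\bigl(\eta(t_{i+1})-\eta(t_i),m(t_{i+1})-m(t_i)\bigr)\Bigr|\leq \|\eta\|_{BV([0,T];X^\ast)}\cdot\max_i\|m(t_{i+1})-m(t_i)\|_X,
\]
which tends to zero as $\delta_n\to0$ by uniform continuity of $m$ in $X$.

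For part (b), I would decompose $|u-m|^2=|u|^2-2(u,m)+|m|^2$ and expand the last two terms via It\^o-type formulas. Property (\ref{3.4}-b) yields immediately $|m(t)|^2=2\int_0^t(m,dm)+\langle m\rangle(t)$. For $(u(t),m(t))$ I would apply the discrete product expansion
\[
(u(t),m(t))=\sum_i(u(t_i),m(t_{i+1})-m(t_i))+\sum_i(u(t_{i+1})-u(t_i),m(t_i))+\sum_i(u(t_{i+1})-u(t_i),m(t_{i+1})-m(t_i)),
\]
then substitute $u(t_{i+1})-u(t_i)=\int_{t_i}^{t_{i+1}}f\,ds-(\eta(t_{i+1})-\eta(t_i))+(m(t_{i+1})-m(t_i))$ from the hypothesis of (b). Passing to the limit as $\delta_n\to0$, the first sum converges to $\int_0^t(u,dm)$ by (\ref{3.4}-d), the second splits into $\int_0^t(m,f)\,ds-\int_0^t(m,d\eta)+\int_0^t(m,dm)$ using the same reasoning as in part (a), and the third cross-sum converges to $\langle m\rangle(t)$ (the contributions of $f$ and $\eta$ vanishing by the BV-modulus estimate). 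Substituting these expansions into $|u-m|^2=|u|^2-2(u,m)+|m|^2$ and using the formula for $|m(t)|^2$ cancels the two $(m,dm)$-integrals and leaves the residual $-\langle m\rangle(t)$.

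The hardest step is the limit passage in part (a), because $m$ and $\eta$ live in dual spaces and the It\^o integral $\int(\eta,dm)$ naturally demands the left-endpoint evaluation whereas summation by parts produces the right endpoint. The gap is closed by the BV-times-modulus estimate above, which crucially uses the stronger hypothesis $m\in C([0,T];X)$ rather than merely $m\in C([0,T];H)$. Once part (a) is in hand, part (b) is essentially a matter of bookkeeping, with the same BV-modulus estimate controlling the various cross-terms that appear when expanding the product increments.
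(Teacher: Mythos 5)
Your proof is correct, but it takes a genuinely different route from the paper's in part (b), and a mildly different one in part (a). For (a) both arguments are pathwise summation by parts on a partition; the difference is the choice of tags. The paper evaluates the Riemann--Stieltjes sums for $\int_0^t(m,d\eta)$ at the \emph{right} endpoints $m(t_{i+1})$, so that Abel's transform produces exactly the left-endpoint It\^o sums $\sum_i(\eta(t_i),m(t_{i+1})-m(t_i))$ required by (\ref{3.4}-d), with no remainder at all; your left-endpoint tagging instead forces you to absorb the cross term $\sum_i(\eta(t_{i+1})-\eta(t_i),m(t_{i+1})-m(t_i))$ through the estimate $\left\Vert \eta\right\Vert _{BV([0,T];X^{\ast})}\max_i\left\Vert m(t_{i+1})-m(t_i)\right\Vert _X\rightarrow0$ --- correct, but avoidable. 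The substantive divergence is in (b): the paper never proves an It\^o product rule for $(u(t),m(t))$. It simply substitutes $\eta=u_0+\int_0^{\cdot}f\,ds+m-u$ into both sides of the identity already established in (a), after which only linear rearrangements are needed, together with the deterministic integration by parts $(m(t),F(t))-\int_0^t(F,dm)=\int_0^t(f,m)\,ds$, the identity (\ref{3.4}-b) for $|m(t)|^2$, and the elementary relation $-(u,m)+\frac{1}{2}|m|^2=\frac{1}{2}|u-m|^2-\frac{1}{2}|u|^2$. Your route re-discretizes to expand $(u(t),m(t))$, which obliges you to handle the quadratic cross sum $\sum_i(u(t_{i+1})-u(t_i),m(t_{i+1})-m(t_i))$ and the convergence $\sum_i|m(t_{i+1})-m(t_i)|^2\rightarrow\langle m\rangle(t)$; the latter is standard but, to stay within the paper's toolkit, should be derived from (\ref{3.4}-b,d) via $\sum_i|m(t_{i+1})-m(t_i)|^2=|m(t)|^2-2\sum_i(m(t_i),m(t_{i+1})-m(t_i))$ rather than cited as folklore. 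What each approach buys: yours is a self-contained semimartingale product formula, hence more general and reusable; the paper's exploits the specific structure $\eta=u_0+F+m-u$ to get (b) from (a) by pure bookkeeping, which is shorter and introduces no new limit passages. One technicality you should make explicit, as the paper does: the It\^o sums converge only in probability (uniformly in $t$), so the pathwise identities for all $t\in[0,T]$ a.s. are obtained after passing to a subsequence of partitions.
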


\begin{proof}
Denote $F\left(  t\right)  =\int\nolimits_{0}^{t}f\left(  s\right)  ds.$ Let
$0=t_{0}<t_{1}<...<t_{n}=t,\;\frac{t}{n}=t_{i+1}-t_{i}$ and $g\left(
t_{i}\right)  =g_{i}.$ From the definition of Riemann-Stieltjes and the
properties of stochastic integral (\ref{3.4}) we have for $\mathbb{P}-$a.s.
$\omega\in\Omega$ (on a subsequence $n_{k}$ denoted also $n$)
\[%
\begin{tabular}
[c]{l}%
$\displaystyle\int_{0}^{t}\left(  m\left(  s\right)  ,d\eta\left(  s\right)
\right)  =\lim\limits_{n\rightarrow\infty}\sum\limits_{i=0}^{n-1}\left(
m_{i+1},\eta_{i+1}-\eta_{i}\right)  $\medskip\\
$\displaystyle=\lim\limits_{n\rightarrow\infty}[\left(  m_{n},\eta_{n}\right)
-\left(  m_{0},\eta_{0}\right)  -\sum\limits_{i=0}^{n-1}\left(  \eta
_{i},m_{i+1}-m_{i}\right)  ]$\medskip\\
$\displaystyle=\left(  m\left(  t\right)  ,\eta\left(  t\right)  \right)
-\int\nolimits_{0}^{t}\left(  \eta\left(  s\right)  ,dm\left(  s\right)
\right)  $\medskip\\
$\displaystyle=\left(  m\left(  t\right)  ,u_{0}+F\left(  t\right)  +m\left(
t\right)  -u\left(  t\right)  \right)  $\medskip\\
$\quad\displaystyle-\int\nolimits_{0}^{t}\left(  u_{0}+F\left(  s\right)
+m\left(  s\right)  -u\left(  s\right)  ,dm\left(  s\right)  \right)
$\medskip\\
$=\displaystyle[\left(  m\left(  t\right)  ,F\left(  t\right)  \right)
-\int\nolimits_{0}^{t}\left(  F\left(  s\right)  ,dm\left(  s\right)  \right)
]$\medskip\\
$\quad\displaystyle+[\int\nolimits_{0}^{t}\left(  u\left(  s\right)
,dm\left(  s\right)  \right)  -\left(  m\left(  t\right)  ,u\left(  t\right)
\right)  ]\,$\medskip\\
$\quad\displaystyle+[\left\vert m\left(  t\right)  \right\vert ^{2}%
-\int\nolimits_{0}^{t}\left(  m\left(  s\right)  ,dm\left(  s\right)  \right)
]$\medskip\\
$=\displaystyle\int\nolimits_{0}^{t}\left(  f\left(  s\right)  ,m\left(
s\right)  \right)  ds+[\int\nolimits_{0}^{t}\left(  u\left(  s\right)
,dm\left(  s\right)  \right)  -\left(  m\left(  t\right)  ,u\left(  t\right)
\right)  ]$\medskip\\
$\quad\displaystyle+[\frac{1}{2}\left\vert m\left(  t\right)  \right\vert
^{2}+\frac{1}{2}\left\langle m\right\rangle \left(  t\right)  ]$\medskip\\
$=\displaystyle\int\nolimits_{0}^{t}\left(  f\left(  s\right)  ,m\left(
s\right)  \right)  ds+\int\nolimits_{0}^{t}\left(  u\left(  s\right)
,dm\left(  s\right)  \right)  $\medskip\\
$\quad+\displaystyle\frac{1}{2}\left\vert u\left(  t\right)  -m\left(
t\right)  \right\vert ^{2}-\frac{1}{2}\left\vert u\left(  t\right)
\right\vert ^{2}+\frac{1}{2}\left\langle m\right\rangle \left(  t\right)  .$%
\end{tabular}
\ \ \ \
\]
\hfill
\end{proof}

\textbf{E.} Finally it is interesting to recall a general Pardoux's result
from \cite{PaEA}. We shall see that our results means a generalization, also,
for unbounded operator but in the multivalued case.

Let $\left(  H,\left(  \cdot,\cdot\right)  ,\left\vert \cdot\right\vert
\right)  $ be a real separable Hilbert space and let $\left(  V,\left\Vert
\cdot\right\Vert \right)  $ be a real separable reflexive Banach space with
the dual $\left(  V^{\ast},\left\Vert \cdot\right\Vert _{V^{\ast}}\right)  $
strictly convex space. Assume $V\subset H\cong H^{\ast}\subset V^{\ast}$ with
continuous densely embeddings.

Consider the equation
\begin{equation}%
\begin{tabular}
[c]{l}%
$dy\left(  t\right)  +A\left(  t,y\left(  t\right)  \right)  dt=f\left(
t\right)  dt+B\left(  t,y\left(  t\right)  \right)  dW\left(  t\right)
+dM\left(  t\right)  $\medskip\\
$y\left(  0\right)  =y_{0},\;\;t\in\left[  0,T\right]  ,$%
\end{tabular}
\ \ \label{3.9}%
\end{equation}
where $A\left(  t,\cdot\right)  :V\rightarrow V^{\ast}$ defined for a.e.
$t\in(0,T)$ satisfies: there exist the constants $p>1,\;\,\,a,\alpha
>0,\;\,\,d,d_{0}\in\mathbb{R}$ such that for all $u,v,z\in V$ the following
properties hold a.e. $t\in(0,T):$%
\begin{equation}%
\begin{tabular}
[c]{l}%
$i)\quad2\left\langle A\left(  t,u\right)  ,u\right\rangle +d\left\vert
u\right\vert ^{2}+d_{0}\geq\alpha\left\Vert u\right\Vert ^{p},$\medskip\\
$ii)\quad2\left\langle A\left(  t,u\right)  -A\left(  t,v\right)
,u-v\right\rangle +d\left\vert u-v\right\vert ^{2}\geq0,$\medskip\\
$iii)\quad\left\Vert A\left(  t,u\right)  \right\Vert _{\ast}^{p^{\prime}}\leq
a\left(  1+\left\Vert u\right\Vert ^{p}\right)  ,\;1/p+1/p^{\prime}%
=1,$\medskip\\
$iv)\quad s\longmapsto\left\langle A\left(  t,u+sv\right)  ,z\right\rangle
:\mathbb{R}\rightarrow\mathbb{R}$ is a continuous function,\medskip\\
$v)\quad s\longmapsto\left\langle A\left(  s,u\right)  ,v\right\rangle
:\left]  0,T\right[  \rightarrow\mathbb{R}$ is Lebesgue measurable,
\end{tabular}
\ \ \label{3.10}%
\end{equation}
and $B\left(  t,\cdot\right)  :H\rightarrow L^{2}\left(  U_{0},H\right)  $
defined for a.e. $t\in\left]  0,T\right[  $ satisfies: $\exists L,b>0$ such
that for all $u,v\in H,\;w\in U_{0}$ we have a.e. $t\in\left]  0,T\right[  :$%
\begin{equation}%
\begin{tabular}
[c]{l}%
$i)\quad\left\vert B\left(  t,u\right)  -B\left(  t,v\right)  \right\vert
_{Q}^{2}\leq L\left\vert u-v\right\vert ^{2},$\medskip\\
$ii)\quad\left\vert B\left(  t,u\right)  \right\vert _{Q}^{2}\leq b\left(
1+\left\vert u\right\vert ^{2}\right)  ,$\medskip\\
$iii)\quad s\longmapsto\left(  B\left(  s,u\right)  w,v\right)  :\left]
0,T\right[  \rightarrow\mathbb{R}$ is Lebesgue measurable
\end{tabular}
\ \ \label{3.11}%
\end{equation}
Also it is assumed:
\begin{equation}%
\begin{tabular}
[c]{l}%
$i)\quad y_{0}\in L^{2}\left(  \Omega,\mathcal{F}_{0},\mathbb{P};H\right)
,$\medskip\\
$ii)\quad f=f_{1}+f_{2},\;f_{1}\in L_{ad}^{2}\left(  \Omega;L^{1}\left(
0,T;H\right)  \right)  ,\;f_{2}\in L_{ad}^{p^{\prime}}\left(  \Omega
\times\left]  0,T\right[  ;V^{\ast}\right)  ,$\medskip\\
$iii)\quad M\in\mathcal{M}^{2}\left(  0,T;H\right)  ,$\medskip\\
$iv)\quad\left\{  W\left(  t\right)  ,t\geq0\right\}  $ is a $Q$-Wiener
process.
\end{tabular}
\ \ \label{3.12}%
\end{equation}

\begin{theorem}
\label{t3.2}(E. Pardoux \cite{PaEA}). Under the assumptions (\ref{3.10})
\ (\ref{3.11})\ and (\ref{3.12})\ the equation (\ref{3.9})\ has a unique
solution $y\in L_{ad}^{p}\left(  \Omega\times\left]  0,T\right[  ;V\right)
\cap L_{ad}^{2}\left(  \Omega;C\left(  \left[  0,T\right]  ;H\right)  \right)
$. The solution $y$\ satisfies

\noindent a) (Energy Equality)%
\[%
\begin{array}
[c]{l}%
\displaystyle\left\vert y\left(  t\right)  \right\vert ^{2}+2\int%
\nolimits_{0}^{t}\left\langle A\left(  s,y\left(  s\right)  \right)  ,y\left(
s\right)  \right\rangle ds=\left\vert y_{0}\right\vert ^{2}+2\int%
\nolimits_{0}^{t}\left\langle f\left(  s\right)  ,y\left(  s\right)
\right\rangle ds+2\int\nolimits_{0}^{t}\left(  y\left(  s\right)  ,B\left(
s,y\left(  s\right)  \right)  dW\left(  s\right)  \right)  \medskip\\
\displaystyle+2\int\nolimits_{0}^{t}\left(  y\left(  s\right)  ,dM\left(
s\right)  \right)  +\langle M-\int\nolimits_{0}^{\cdot}B\left(  s,y\left(
s\right)  \right)  dW\left(  s\right)  \rangle\left(  t\right)  \ forallt\in
\left[  0,T\right]  ,~a.e.~\omega\in\Omega.
\end{array}
\]
\noindent b)\quad$\displaystyle m_{1}\left(  \cdot\right)  =\int%
\nolimits_{0}^{\cdot}\left(  y\left(  s\right)  ,B\left(  s,y\left(  s\right)
\right)  dW\left(  s\right)  \right)  ,\;m_{2}\left(  \cdot\right)
=\int\nolimits_{0}^{\cdot}\left(  y\left(  s\right)  ,dM\left(  s\right)
\right)  $\ are martingales from $M^{1}\left(  0,T;R\right)  .$
\end{theorem}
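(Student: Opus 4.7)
The strategy I would follow is the classical Galerkin plus monotonicity method of Pardoux \cite{PaEA}. Choose a sequence $\{v_i\}_{i\ge 1}\subset V$ that is a Schauder basis for $V$ and an orthonormal basis of $H$, and let $H_n=\mathrm{span}\{v_1,\dots,v_n\}$ with $P_n:V^{\ast}\to H_n$ the associated projection. Write the finite-dimensional approximation
\begin{equation*}
y_n(t)=P_ny_0-\int_0^t P_n A(s,y_n(s))ds+\int_0^t P_n f(s)ds+\int_0^t P_n B(s,y_n(s))dW(s)+P_nM(t).
\end{equation*}
Hemicontinuity and monotonicity of $A(t,\cdot)$ on the finite-dimensional space $H_n$, together with the Lipschitz bound on $B$, give local existence of $y_n$; the a priori estimate below then extends it to $[0,T]$.

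Next I would derive a priori estimates by applying the finite-dimensional It\^o formula to $e^{-dt}|y_n(t)|^2$, using the coercivity (\ref{3.10}-i), the growth (\ref{3.11}-ii), the Burkh\"older--Davis--Gundy inequality (\ref{3.7}-d) on the stochastic integral, and the integration-by-parts formula of Proposition \ref{p3.1} applied to $\int_0^t(y_n,dM)$. This yields uniform bounds
\begin{equation*}
\mathbb{E}\sup_{t\in[0,T]}|y_n(t)|^2+\alpha\mathbb{E}\int_0^T\|y_n(t)\|^p dt\le C\bigl(\mathbb{E}|y_0|^2+\|f\|^2+\mathbb{E}\langle M\rangle(T)+1\bigr),
\end{equation*}
and by (\ref{3.10}-iii) the sequence $A(\cdot,y_n)$ is bounded in $L^{p'}_{ad}(\Omega\times(0,T);V^{\ast})$, while $B(\cdot,y_n)$ is bounded in $L^2_{ad}(\Omega\times(0,T);\mathcal{L}^2_0)$. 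Extracting a subsequence, I get weak (or weak$^\ast$) limits $y$, $\chi$, $Z$ in the respective spaces, and by linearity of the stochastic and Lebesgue integrals $y$ satisfies
\begin{equation*}
y(t)+\int_0^t\chi(s)ds=y_0+\int_0^t f(s)ds+\int_0^t Z(s)dW(s)+M(t).
\end{equation*}

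The central obstacle is identifying $\chi=A(\cdot,y)$ and $Z=B(\cdot,y)$: this is the monotonicity trick. Applying It\^o to $e^{-dt}|y_n|^2$, passing to the $\liminf$, and subtracting the analogous identity derived from the limit equation (where a generalized It\^o formula for variational solutions with martingale input $M$ must be established --- this is precisely Proposition \ref{p3.1} combined with the density argument of Lions/Pardoux), yields
\begin{equation*}
\mathbb{E}\int_0^T e^{-ds}\bigl(2\langle\chi(s)-A(s,v(s)),y(s)-v(s)\rangle+|Z(s)-B(s,v(s))|_Q^2\bigr)ds\ge 0
\end{equation*}
for every $v\in L^p_{ad}(\Omega\times(0,T);V)$. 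Choosing $v=y-\lambda w$ with $w\in L^p_{ad}(\Omega\times(0,T);V)$, dividing by $\lambda$, and using hemicontinuity (\ref{3.10}-iv) to let $\lambda\downarrow 0$ forces $\chi(s)=A(s,y(s))$; the Lipschitz bound (\ref{3.11}-i) then forces $Z=B(\cdot,y)$.

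Finally, the energy equality in (a) is read off from the generalized It\^o formula once $\chi=A(\cdot,y)$ is identified, and the martingale property in (b) follows from (\ref{3.7}-e) and from $\mathbb{E}\sup_t|y(t)|\sqrt{\langle M\rangle(T)}<\infty$ via (\ref{3.4}-c). Uniqueness is then routine: if $y_1,y_2$ are two solutions, applying the energy equality to $y_1-y_2$ (whose martingale part is $\int_0^{\cdot}(B(s,y_1)-B(s,y_2))dW$, the input $M$ cancelling), using (\ref{3.10}-ii) and (\ref{3.11}-i), and invoking Gronwall gives $y_1=y_2$. The hardest step in practice is the generalized It\^o formula that justifies the energy equality for a variational solution driven by a square-integrable martingale $M$; this requires a careful regularization of $y$ in time and a passage to the limit using the integration-by-parts identity of Proposition \ref{p3.1}.
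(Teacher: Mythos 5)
A preliminary remark on the comparison itself: the paper offers \emph{no} proof of Theorem \ref{t3.2} --- it is recalled from Pardoux's thesis \cite{PaEA} --- so your proposal can only be measured against the classical variational argument of that source. In outline your plan is exactly that argument: Galerkin approximation in an $H$-orthonormal system spanning a dense subspace of $V$, a priori estimates from the finite-dimensional It\^o formula with an exponential weight together with the coercivity (\ref{3.10}-i), growth (\ref{3.10}-iii), (\ref{3.11}-ii) and Burkh\"older--Davis--Gundy, weak limits $y,\chi,Z$, identification by the monotonicity trick and hemicontinuity, the energy equality from a generalized It\^o formula of the type of Proposition \ref{p3.1}, and uniqueness via (\ref{3.10}-ii), (\ref{3.11}-i) and Gronwall. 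The treatment of the extra martingale $M$ through $P_nM$ and the bound by $\mathbb{E}\left\langle M\right\rangle\left(T\right)$ is also correct.

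There is, however, one step that fails as literally written: the orientation of your limit inequality, and consequently the order of identification. The exponential weight must absorb both the monotonicity defect $d$ of (\ref{3.10}-ii) and the Lipschitz constant $L$ of (\ref{3.11}-i), so one works with $e^{-cs}$, $c\geq d+L$ (not $e^{-ds}$ alone), and the inequality produced by the $\liminf$ (weak lower semicontinuity of the norms) reads, with the compensating quadratic term you dropped,
\[
\mathbb{E}\int\nolimits_{0}^{T}e^{-cs}\Bigl(2\left\langle \chi\left(s\right)-A\left(s,v\left(s\right)\right),y\left(s\right)-v\left(s\right)\right\rangle -c\left\vert y\left(s\right)-v\left(s\right)\right\vert ^{2}+\left\vert Z\left(s\right)-B\left(s,v\left(s\right)\right)\right\vert _{Q}^{2}\Bigr)ds\leq0,
\]
i.e.\ with $\leq0$, not $\geq0$. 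With your sign both conclusions evaporate: taking $v=y$ yields only the trivial statement $\left\vert Z-B\left(\cdot,y\right)\right\vert _{Q}^{2}\geq0$, and taking $v=y-\lambda w$ and dividing by $\lambda$ gives nothing, because the term $\lambda^{-1}\left\vert Z-B\left(\cdot,y-\lambda w\right)\right\vert _{Q}^{2}$ blows up as $\lambda\downarrow0$ unless $Z=B\left(\cdot,y\right)$ is already known. With the correct $\leq0$ one must also identify in the opposite order to yours: first $v=y$ forces $Z=B\left(\cdot,y\right)$, and only then does $v=y-\lambda w$, division by $\lambda$, and hemicontinuity (\ref{3.10}-iv) give $\chi=A\left(\cdot,y\right)$ \`{a} la Minty. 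Finally, when you ``read off'' part a), note that the martingale part of $y$ is $M+\int\nolimits_{0}^{\cdot}B\left(s,y\left(s\right)\right)dW\left(s\right)$, so its bracket contains the cross-variation $\left\langle M,\int\nolimits_{0}^{\cdot}B\,dW\right\rangle$; the bracket term as printed in the theorem, $\langle M-\int\nolimits_{0}^{\cdot}B\,dW\rangle$, coincides with the one It\^o's formula actually produces precisely when this cross-variation vanishes, a point your sketch does not track.
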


\subsection{$\alpha$-Monotone SDE with additive noise}

Let \ $X\subset H\cong H^{\ast}\subset X^{\ast}$ be the spaces defined at the
beginning of Section 2 that is: $\left(  H,\left\vert \cdot\right\vert
\right)  $ is a real separable Hilbert space and $\left(  X,\left\Vert
\cdot\right\Vert \right)  $ is a real separable Banach space with the dual
$\left(  X^{\ast},\left\Vert \cdot\right\Vert _{\ast}\right)  $ separable too.
The inclusion mapping $X\subset H$ is continuous and $X$ is dense in $H.$

We shall assume given a stochastic basis $\;(\Omega,\mathcal{F},\mathbb{P}%
,\left\{  \mathcal{F}_{t}\right\}  _{t\geq0}).$ Consider the multivalued
stochastic differential equation:%
\begin{equation}
\left\{
\begin{tabular}
[c]{l}%
$du\left(  t\right)  +Au\left(  t\right)  dt\ni f\left(  t\right)
dt+dM\left(  t\right)  ,$\medskip\\
$u\left(  0\right)  =u_{0},\quad\quad\quad\quad\;\;t\in\left[  0,T\right]  ,$%
\end{tabular}
\ \ \ \ \right.  \label{3.13}%
\end{equation}
where we put the assumptions:%
\[%
\begin{tabular}
[c]{l}%
(A$_{1}$)$\left\{
\begin{array}
[c]{c}%
A:H\rightarrow2^{H}\,\;\text{ satisfies (H}_{1}\text{), that is\ \quad
\quad\quad\quad\ \ \ \ \ \ \ \ \ \ \ \ \ \ \ \ \ \ \ \ ~}\medskip\\%
\begin{tabular}
[c]{l}%
$i)\;A:H\rightarrow2^{H}$ is $\alpha$-maximal monotone operator $\left(
\alpha\in\mathbb{R}\right)  ,\medskip$\\
$ii)\;\exists\;h_{0}\in H,\;\exists\,\,r_{0},a_{1},a_{2}>0$ \quad such
that$\medskip$\\
$r_{0}\left\Vert y\right\Vert _{X^{\ast}}\leq\left(  y,x-h_{0}\right)
+a_{1}\left\vert x\right\vert ^{2}+a_{2},\;\forall\left[  x,y\right]  \in
A.\left(  \footnotemark\right)  $
\end{tabular}
\end{array}
\right.  $\\
and\\
(A$_{2}$)$\left\{
\begin{tabular}
[c]{l}%
$i)\quad u_{0}\in L^{2}\left(  \Omega,\mathcal{F}_{0},\mathbb{P};H\right)
,\;u_{0}\left(  \omega\right)  \in\overline{D\left(  A\right)  }%
,\;\mathbb{P}-$a.s. $\omega\in\Omega$,$\quad\quad\quad\medskip$\\
$ii)\quad f\in L_{ad}^{2}\left(  \Omega;L^{1}\left(  0,T;H\right)  \right)
,\medskip$\\
$iii)\quad M\in\mathcal{M}^{2}\left(  0,T;H\right)  .$%
\end{tabular}
\ \ \ \ \right.  $%
\end{tabular}
\ \ \
\]
\footnotetext{see also Theorem 2.2}We mention that by Theorem \ref{t2.3} if
$M\in L^{0}\left(  \Omega;C\left(  \left[  0,T\right]  ;X\right)  \right)  $
then equation (\ref{3.13}) has a unique (determinist) solution%
\[
u\left(  \omega,\cdot\right)  =GD\left(  A;u_{0}\left(  \omega\right)
,f\left(  \omega,\cdot\right)  ,M\left(  \omega,\cdot\right)  \right)
\;\text{a.s. }\omega\in\Omega.
\]
In the sequel using the martingale properties we shall extend our determinist
results to the case when $M$ has not $X$-valued continuous trajectories. The
corresponding solution will be denoted by $GS\left(  A;u_{0},f,M\right)  .$
Certainly we shall use the determinist result approximating $M\in
\mathcal{M}^{2}\left(  0,T;H\right)  $ by%
\begin{equation}%
\begin{tabular}
[c]{l}%
$\overline{M}_{n}\in L_{ad}^{2}\left(  \Omega;C\left(  \left[  0,T\right]
;X\right)  \right)  \cap\mathcal{M}^{2}\left(  0,T;H\right)  $\medskip\\
$\overline{M}_{n}\rightarrow M$ in $\mathcal{M}^{2}\left(  0,T;H\right)  .$%
\end{tabular}
\ \label{3.14}%
\end{equation}
Remark that putting%
\begin{equation}
\overline{M}_{n}\left(  t\right)  =\sum\limits_{i=1}^{n}\left(  M\left(
t\right)  ,h_{i}\right)  h_{i}, \label{3.15}%
\end{equation}
where $\left\{  h_{i},i\in\mathbb{N}^{\ast}\right\}  \subset X$ is an
orthonormal basis in $H,$then $\overline{M}_{n}$ satisfies (\ref{3.14}).

\begin{definition}
\label{d3.1} A pair of stochastic processes $\left(  u,\eta\right)  $\ is the
generalized stochastic solution of the stochastic evolution equation
(\ref{3.13}), $\left(  u,\eta\right)  =GS\left(  A;u_{0},f,M\right)  ,$\ if
\[%
\begin{tabular}
[c]{l}%
$s_{1})\quad u\in L_{ad}^{0}\left(  \Omega;C\left(  \left[  0,T\right]
;H\right)  \right)  ;\;u\left(  0\right)  =u_{0}$\ a.s. and\medskip\\
\quad\quad\ $u\left(  t\right)  \in\overline{D\left(  A\right)  },\;\forall
t\in\left[  0,T\right]  $\ a.s.,\medskip\\
$s_{2})\quad\eta\in L_{ad}^{0}\left(  \Omega;C\left(  \left[  0,T\right]
;H\right)  \right)  \cap L^{0}\left(  \Omega;BV\left(  0,T;X^{\ast}\right)
\right)  ,$\medskip\\
\quad\quad\ $\eta\left(  0\right)  =0$ a.s.\medskip\\
$s_{3})\quad\displaystyle u\left(  t\right)  +\eta\left(  t\right)
=u_{0}+\int\nolimits_{0}^{t}f\left(  s\right)  ds+M\left(  t\right)
,\;\forall t\in\left[  0,T\right]  ,$ a.s.\medskip\\
$s_{4})\quad$there exists $\overline{M}_{n}$\ a stochastic process satisfying
(\ref{3.14})\medskip\\
\quad\quad\ such that denoting for a.s. \ $\omega\in\Omega:$\medskip\\
$\quad\quad\,(\overline{u}_{n}\left(  \omega,\cdot\right)  ,\overline{\eta
}_{n}\left(  \omega,\cdot\right)  )=GD\left(  A;u_{0}\left(  \omega\right)
,f\left(  \omega,\cdot\right)  ,\overline{M}_{n}\left(  \omega,\cdot\right)
\right)  $\medskip\\
\quad\quad\ then $\;\overline{u}_{n}\rightarrow u,\;\overline{\eta}%
_{n}\rightarrow\eta$\ in\ $L_{ad}^{0}\left(  \Omega;C\left(  \left[
0,T\right]  ;H\right)  \right)  $ as $n\rightarrow\infty$ and\medskip\\
\quad\quad\ $\sup\limits_{n}\mathbb{E}\left\Vert \overline{\eta}%
_{n}\right\Vert _{BV\left(  \left[  0,T\right]  ;X^{\ast}\right)  }%
<+\infty.\;$%
\end{tabular}
\ \
\]

\end{definition}

Remark that the adaptability of the stochastic processes $u$ and $\eta
=u_{0}+\int\nolimits_{0}^{t}f\left(  s\right)  ds+M-u$ is obtained from
$s_{4}) $ and the continuity of the Skorohod mapping%
\[
\left(  u_{0}\left(  \omega\right)  ,f\left(  \omega,\cdot\right)
,\overline{M}_{n}\left(  \omega,\cdot\right)  \right)  \overset{\Gamma
}{\longmapsto}\overline{u}_{n}\left(  \omega,\cdot\right)
\]
(see Theorem \ref{t2.3}). We have not confusion if we denote, also
$u=GS\left(  A;u_{0},f,M\right)  $ since $\eta$ uniquely defined by s$_{3}).$

\begin{proposition}
\label{p3.2}\ a) \ If $\left(  u,\eta\right)  =GS\left(  A;u_{0},f,M\right)
$\ and $\left(  v,\eta\right)  =GS\left(  A;v_{0},g,N\right)  $\ are two
generalized solution of the Cauchy problem (\ref{3.13})\ then%
\begin{equation}%
\begin{tabular}
[c]{l}%
$\displaystyle\left\vert u\left(  t\right)  -v\left(  t\right)  \right\vert
^{2}\leq\left\vert u\left(  s\right)  -v\left(  s\right)  \right\vert
^{2}+2\alpha\int_{s}^{t}\left\vert u-v\right\vert ^{2}d\tau$\medskip\\
$\displaystyle\quad+2\int\nolimits_{s}^{t}\left(  u-v,f-g\right)  d\tau
+2\int\nolimits_{s}^{t}\left(  u\left(  \tau\right)  -v\left(  \tau\right)
,dM\left(  \tau\right)  -dN\left(  \tau\right)  \right)  $\medskip\\
$\displaystyle\quad+\left\langle M-N\right\rangle \left(  t\right)
-\left\langle M-N\right\rangle \left(  s\right)  ,$%
\end{tabular}
\ \ \label{3.16}%
\end{equation}
for all $0\leq s\leq t\leq T,$a.s. $\omega\in\Omega.$

b) \ The equation (\ref{3.13})\ has at most one generalized solution.
\end{proposition}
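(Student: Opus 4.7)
The plan is to obtain the energy inequality (\ref{3.16}) by first deriving its deterministic counterpart pathwise for the approximating solutions supplied by clause $s_4)$ of Definition \ref{d3.1}, then passing to the limit using the martingale convergence properties in (\ref{3.4}) and (\ref{3.14}). Part (b) will fall out of (a) by a one-line Gronwall argument.

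First I fix approximations $\overline{M}_n,\overline{N}_n$ as in (\ref{3.14}) and introduce the deterministic generalized solutions $(\overline{u}_n,\overline{\eta}_n)=GD(A;u_0,f,\overline{M}_n)$ and $(\overline{v}_n,\overline{\zeta}_n)=GD(A;v_0,g,\overline{N}_n)$. Writing $w_n=\overline{u}_n-\overline{v}_n$, $\mu_n=\overline{\eta}_n-\overline{\zeta}_n$, $m_n=\overline{M}_n-\overline{N}_n$ and $\phi=f-g$, the pathwise inequality (\ref{2.9}) applied to this pair of solutions provides, for a.e.\ $\omega$ and all $0\le s\le t\le T$, a bound on $|w_n(t)-m_n(t)|^2$ in terms of $|w_n(s)-m_n(s)|^2$, the Riemann--Stieltjes integral $\int_s^t(m_n,d\mu_n)$, the absolute term $\alpha\int_s^t|w_n|^2\,d\tau$, and $\int_s^t(\phi,w_n-m_n)\,d\tau$.

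Next I apply the It\^o-type identity of Proposition \ref{p3.1}(b) to $|w_n-m_n|^2$; this is legitimate because $m_n$ has $X$-valued continuous paths and lies in $\mathcal{M}^2(0,T;H)$, and the decomposition $w_n+\mu_n=(u_0-v_0)+\int_0^{\cdot}\phi\,d\tau+m_n$ fits exactly the hypothesis of that proposition. Substituting this identity (evaluated at $s$ and $t$) into the inequality produced in the previous step, the $\int(m_n,d\mu_n)$-terms cancel while the two Lebesgue integrals fuse via $(m_n,\phi)+(\phi,w_n-m_n)=(\phi,w_n)$, leaving the clean bound
\[
|w_n(t)|^2-|w_n(s)|^2\le 2\int_s^t(\phi,w_n)\,d\tau+2\alpha\int_s^t|w_n|^2\,d\tau+2\int_s^t(w_n,dm_n)+\langle m_n\rangle(t)-\langle m_n\rangle(s).
\]

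It then remains to let $n\to\infty$. By $s_4)$, $w_n\to u-v$ in $L^0_{ad}(\Omega;C([0,T];H))$; by (\ref{3.14}), $m_n\to M-N$ in $\mathcal{M}^2(0,T;H)$, which forces $\langle m_n\rangle\to\langle M-N\rangle$ in $L^1$; and the joint continuity statement (\ref{3.4}-a) propagates these convergences to $\int_s^t(w_n,dm_n)\to\int_s^t(u-v,d(M-N))$ in probability. Extracting an almost-sure convergent subsequence (indexed by rational pairs $s\le t$, then upgrading by continuity in $(s,t)$) yields (\ref{3.16}). For part (b), take $u_0=v_0$, $f=g$, $M=N$, $s=0$ in (\ref{3.16}); the stochastic integral and the quadratic variation vanish, leaving $|u(t)-v(t)|^2\le 2\alpha\int_0^t|u-v|^2\,d\tau$ pathwise, and Gronwall forces $u\equiv v$. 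The one delicate step is the limit in the stochastic integral, since $w_n$ converges merely in probability rather than in $L^2$; this is handled precisely by the $L^0_{ad}$-continuity asserted in (\ref{3.4}-a).
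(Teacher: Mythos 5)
Your proposal is correct and follows essentially the same route as the paper: the pathwise deterministic stability inequality (\ref{2.9}) for the approximating pairs $GD(A;u_0,f,\overline{M}_n)$ and $GD(A;v_0,g,\overline{N}_n)$, converted via the integration-by-parts identity (\ref{3.8}) of Proposition \ref{p3.1} into the pre-limit inequality (\ref{3.17}), followed by the limit $n\to\infty$ using (\ref{3.14}) and (\ref{3.4}), with uniqueness from Gronwall. Your added care at the limit step (a.s.\ subsequence along rational pairs, upgraded by continuity in $(s,t)$) merely makes explicit what the paper leaves implicit in ``passing to limit as $n\rightarrow\infty$.''
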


\begin{proof}
From Definition \ref{d3.1} and the inequality (\ref{2.18}) we have for all
$0\leq s\leq t\leq T:$%
\[%
\begin{tabular}
[c]{l}%
$\left\vert \overline{u}_{n}\left(  \omega,t\right)  -\overline{M}_{n}\left(
\omega,t\right)  -\overline{v}_{n}\left(  \omega,t\right)  -\overline{N}%
_{n}\left(  \omega,t\right)  \right\vert ^{2}\leq$\medskip\\
$\displaystyle\leq\left\vert \overline{u}_{n}\left(  \omega,s\right)
-\overline{M}_{n}\left(  \omega,s\right)  -\overline{v}_{n}\left(
\omega,s\right)  +\overline{N}_{n}\left(  \omega,s\right)  \right\vert
^{2}+2\alpha\int\nolimits_{s}^{t}\left\vert \overline{u}_{n}-\overline{v}%
_{n}\right\vert ^{2}ds$\medskip\\
$\displaystyle\quad+2\int\nolimits_{s}^{t}\left(  \overline{u}_{n}\left(
\omega,\tau\right)  -\overline{M}_{n}\left(  \omega,\tau\right)  -\overline
{v}_{n}\left(  \omega,\tau\right)  +\overline{N}_{n}\left(  \tau\right)
,f\left(  \omega,\tau\right)  -g\left(  \omega,\tau\right)  \right)  d\tau
$\medskip\\
$\displaystyle\quad+2\int\nolimits_{s}^{t}\left\langle \overline{M}_{n}\left(
\omega,\tau\right)  -\overline{N}_{n}\left(  \omega,\tau\right)
,d\overline{\eta}_{n}\left(  \omega,\tau\right)  -d\overline{\zeta}_{n}\left(
\omega,\tau\right)  \right\rangle $ a.s. $\omega\in\Omega.$%
\end{tabular}
\ \ \
\]
The continuity with respect to $t$ and $s$ involved that this inequality holds
for all $0\leq s\leq t\leq T$ and $\omega\in\Omega_{0},\;\mathbb{P}\left(
\Omega_{0}\right)  =1.$ Using Proposition \ref{p3.1} (equality \ref{3.8}) with
$u=\overline{u}_{n}-\overline{v}_{n}$, $m=\overline{M}_{n}-\overline{N}_{n}$,
$f:=f-g$, $\eta=\overline{\eta}_{n}-\overline{\zeta}_{n}$ from this last
inequality we have%
\begin{equation}%
\begin{tabular}
[c]{l}%
$\displaystyle\left\vert \overline{u}_{n}\left(  t\right)  -\overline{v}%
_{n}\left(  t\right)  \right\vert ^{2}\leq\left\vert \overline{u}_{n}\left(
s\right)  -\overline{v}_{n}\left(  s\right)  \right\vert ^{2}+2\alpha
\int\nolimits_{s}^{t}\left\vert \overline{u}_{n}-\overline{v}_{n}\right\vert
^{2}d\tau+$\medskip\\
$\displaystyle\quad+2\int\nolimits_{s}^{t}\left(  \overline{u}_{n}%
-\overline{v}_{n},f-g\right)  d\tau+\left\langle \overline{M}_{n}-\overline
{N}_{n}\right\rangle \left(  t\right)  -\left\langle \overline{M}%
_{n}-\overline{N}_{n}\right\rangle \left(  s\right)  $\medskip\\
$\displaystyle\quad+2\int_{s}^{t}\left(  \overline{u}_{n}\left(  \tau\right)
-\overline{v}_{n}\left(  \tau\right)  ,d\overline{M}_{n}\left(  \tau\right)
-d\overline{N}_{n}\left(  \tau\right)  \right)  ,$%
\end{tabular}
\ \ \ \label{3.17}%
\end{equation}
for all $0\leq s\leq t\leq T;$ a.s. $\omega\in\Omega$, which yields
(\ref{3.16}) passing to limit as $n\rightarrow\infty$. The uniqueness is,
obviously, the consequence of (\ref{3.16}).\hfill
\end{proof}

\begin{theorem}
\label{t3.4}Under the assumptions (A$_{1}$)\ and (A$_{2}$) the initial problem
(\ref{3.13}) has a unique generalized (stochastic) solution $\left(
u,\eta\right)  $, $\left(  u,\eta\right)  =GS\left(  A;u_{0},f,M\right)  .$
Moreover this solution satisfies:%
\begin{equation}%
\begin{tabular}
[c]{l}%
$a)\quad u\in L_{ad}^{2}\left(  \Omega;C\left(  \left[  0,T\right]  ;H\right)
\right)  $\medskip\\
$b)\quad\eta\in L_{ad}^{2}\left(  \Omega;C\left(  \left[  0,T\right]
;H\right)  \right)  \cap L^{1}\left(  \Omega;BV\left(  0,T;X^{\ast}\right)
\right)  $\medskip\\
$c)\quad\mathbb{E}\sup\limits_{t\in\left[  0,T\right]  }\left\vert u\left(
t\right)  \right\vert ^{2}+\mathbb{E}\left\Vert \eta\right\Vert _{BV\left(
\left[  0,T\right]  ;X^{\ast}\right)  }\leq C_{0}[1+\mathbb{E}\left\vert
u_{0}\right\vert ^{2}+$\medskip\\
\quad\quad$\displaystyle+\mathbb{E}\left(  \int\nolimits_{0}^{T}\left\vert
f\left(  s\right)  \right\vert ds\right)  ^{2}+\mathbb{E}\left\vert M\left(
T\right)  \right\vert ^{2}]\left(  \footnotemark\right)  $
\end{tabular}
\ \ \label{3.18}%
\end{equation}
\footnotetext{$C_{0}=C_{0}\left(  T,r_{0},h_{0},a_{1},a_{2}\right)  >0$}and if
$\left(  u,\eta\right)  =GS\left(  A;u_{0},f,M\right)  ,\;\left(
v,\zeta\right)  =GS\left(  A;v_{0},g,N\right)  $\ then for all $t\in\left[
0,T\right]  $%
\begin{equation}%
\begin{tabular}
[c]{r}%
$\displaystyle\mathbb{E}\sup\limits_{s\in\left[  0,t\right]  }\left\vert
u\left(  s\right)  -v\left(  s\right)  \right\vert ^{2}\leq C\left(
\alpha,T\right)  [\mathbb{E}\left\vert u_{0}-v_{0}\right\vert ^{2}%
+\mathbb{E}\left(  \int_{0}^{t}\left\vert f-g\right\vert ds\right)  ^{2}%
$\medskip\\
$+\mathbb{E}\left\vert M\left(  t\right)  -N\left(  t\right)  \right\vert
^{2}$%
\end{tabular}
\label{3.19}%
\end{equation}

\end{theorem}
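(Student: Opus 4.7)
The plan is to use Theorem \ref{t2.3} pathwise on an approximating sequence of $X$-continuous martingales and then take a limit in $L^2_{ad}(\Omega;C([0,T];H))$, with the key stochastic input being the Burkh\"older--Davis--Gundy inequality \eqref{3.7}(d) applied to controlling the covariation of $M-\overline{M}_n$.

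\textbf{Setup and pathwise step.} I choose $\overline{M}_n$ as in \eqref{3.15}, so that $\overline{M}_n\in L^2_{ad}(\Omega;C([0,T];X))\cap\mathcal{M}^2(0,T;H)$ and $\overline{M}_n\to M$ in $\mathcal{M}^2(0,T;H)$. For $\mathbb{P}$-a.s.\ $\omega$, the triple $(u_0(\omega),f(\omega,\cdot),\overline{M}_n(\omega,\cdot))$ satisfies \eqref{2.6}, so Theorem \ref{t2.3} yields a unique generalized deterministic solution $(\overline{u}_n(\omega,\cdot),\overline{\eta}_n(\omega,\cdot))=GD(A;u_0(\omega),f(\omega,\cdot),\overline{M}_n(\omega,\cdot))$. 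Continuity of the Skorohod map (again from Theorem \ref{t2.3}) ensures adaptedness of $\overline{u}_n$ and $\overline{\eta}_n$.

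\textbf{Uniform bounds (estimate (3.18)).} Pathwise from \eqref{2.12} (with $N_0$ depending on $\omega$, but the constant can be made $\omega$-free by using the deterministic bound on the constant $C$ in terms of $r_0,h_0,a_1,a_2,T$ and a modulus that I control via the martingale property), and more cleanly via Remark \ref{r2.1}, I obtain a pathwise inequality whose right-hand side contains the terms $\int_0^t(\overline{M}_n,d\overline{\eta}_n)$ and $\int_0^t(f,\overline{u}_n-\overline{M}_n-h_0)\,ds$. Applying the integration-by-parts formula of Proposition \ref{p3.1} to the first term turns it into a stochastic integral plus $\tfrac12\langle \overline{M}_n\rangle(t)$. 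Taking $\sup_{s\le t}$, expectations, and using the Burkh\"older--Davis--Gundy inequality to absorb $\mathbb{E}\sup|\overline{u}_n|^2$ on the left, together with Doob's inequality $\mathbb{E}\sup|\overline{M}_n|^2\le4\,\mathbb{E}|\overline{M}_n(T)|^2$ and Gronwall, I get
\[
\mathbb{E}\sup_{t\in[0,T]}|\overline{u}_n(t)|^2+\mathbb{E}\|\overline{\eta}_n\|_{BV([0,T];X^\ast)}\le C_0\bigl[1+\mathbb{E}|u_0|^2+\mathbb{E}(\textstyle\int_0^T|f|\,ds)^2+\mathbb{E}|M(T)|^2\bigr],
\]
uniformly in $n$.

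\textbf{Cauchy property and passage to the limit.} Apply inequality \eqref{3.17} (pathwise, from the proof of Proposition \ref{p3.2}) with $\overline{v}_m=\overline{u}_m$, $\overline{N}_m=\overline{M}_m$, $v_0=u_0$, $g=f$, and $s=0$:
\[
|\overline{u}_n(t)-\overline{u}_m(t)|^2\le 2\alpha\!\int_0^t\!|\overline{u}_n-\overline{u}_m|^2d\tau+\langle\overline{M}_n-\overline{M}_m\rangle(t)+2\!\int_0^t\!(\overline{u}_n-\overline{u}_m,d\overline{M}_n-d\overline{M}_m).
\]
Taking $\sup_{s\le T}$ and expectations, the BDG inequality \eqref{3.7}(d) yields
\[
\mathbb{E}\sup_{s\le T}\Bigl|\int_0^s(\overline{u}_n-\overline{u}_m,d\overline{M}_n-d\overline{M}_m)\Bigr|\le\tfrac14\mathbb{E}\sup_{s\le T}|\overline{u}_n-\overline{u}_m|^2+C\,\mathbb{E}\langle\overline{M}_n-\overline{M}_m\rangle(T),
\]
by Cauchy--Schwarz and Young's inequality. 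Since $\mathbb{E}\langle\overline{M}_n-\overline{M}_m\rangle(T)=\mathbb{E}|\overline{M}_n(T)-\overline{M}_m(T)|^2\to0$, Gronwall gives that $(\overline{u}_n)$ is Cauchy in $L^2_{ad}(\Omega;C([0,T];H))$. Let $u$ be its limit and define $\eta=u_0+\int_0^\cdot f\,ds+M-u$; then $\overline{\eta}_n\to\eta$ in the same space, and the BV bound follows by lower-semicontinuity (passing to the $\liminf$ of the uniform bounds). Checking $s_1)$--$s_4)$ of Definition \ref{d3.1} is then routine.

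\textbf{Estimate (3.19) and uniqueness.} The continuous dependence inequality \eqref{3.19} follows by exactly the same argument: write \eqref{3.16} from Proposition \ref{p3.2}, take $\sup$ and expectations, and use BDG plus Gronwall. Uniqueness is already contained in Proposition \ref{p3.2}(b). The main technical obstacle is the interaction between the pathwise deterministic bounds (which involve a modulus of continuity of $M$ in the $X$-norm, not controllable a.s.) and the stochastic estimates; this is resolved precisely by working with the ``Cauchy plus a priori bound'' scheme above, so that the $X$-norm of $\overline{M}_n$ never appears explicitly in the limiting estimates, only the $H$-norm (through the covariation) does.
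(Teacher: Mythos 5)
Your scheme is the paper's own: the projection approximants (\ref{3.15}), pathwise application of Theorem \ref{t2.3}, the a priori bound obtained from Remark \ref{r2.1} combined with the integration-by-parts identity of Proposition \ref{p3.1}, the Cauchy property from the pathwise inequality (\ref{3.17}) plus a martingale estimate and Gronwall, the BV bound in the limit by a Fatou-type argument (the paper's Lemma \ref{l3.5} is exactly your ``lower-semicontinuity'' step), and (\ref{3.19}) plus uniqueness read off from Proposition \ref{p3.2}. There are, however, two points to flag, one of substance.

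The substantive one: both your uniform-bound step and your Cauchy step take expectations of stochastic integrals such as $\int_0^s\left(\overline{u}_n-\overline{u}_m,\,d\overline{M}_n-d\overline{M}_m\right)$ and then absorb $\tfrac14\,\mathbb{E}\sup|\overline{u}_n-\overline{u}_m|^2$ into the left-hand side. That absorption is legitimate only if you already know $\mathbb{E}\sup_{t\in[0,T]}|\overline{u}_n(t)|^2<\infty$; pathwise, Theorem \ref{t2.3} gives $\overline{u}_n$ only in $L_{ad}^0\left(\Omega;C\left(\left[0,T\right];H\right)\right)$, the integrals are a priori only local martingales, and an infinite right-hand side can neither be absorbed nor Gronwalled. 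The paper closes precisely this hole before anything else: it compares $\overline{u}_n$ with the constant generalized solution $v(t)=x$, $\zeta(t)=yt$ for a fixed $\left[x,y\right]\in A$, stops at $\tau_{n,R}=\inf\left\{t:\left\vert\overline{u}_n(t)\right\vert\geq R\right\}$ so that every term is integrable, and lets $R\nearrow\infty$ to obtain (\ref{3.20}), which establishes $\overline{u}_n\in L_{ad}^2\left(\Omega;C\left(\left[0,T\right];H\right)\right)$; only then are the expectation estimates you write justified. Your proposal needs this localization (or an equivalent) inserted before each ``take $\sup$ and expectations'' step; as written, the absorption argument is circular. A minor, purely bibliographic point: the inequality you invoke is not (\ref{3.7}-d), which is Burkh\"older--Davis--Gundy for integrals against the Wiener process; here the integrators $M$, $\overline{M}_n-\overline{M}_m$ are general continuous square-integrable martingales, and the correct tool is (\ref{3.4}-c), i.e. $\mathbb{E}\sup_t\left\vert\int_0^t\left(f,dM\right)\right\vert\leq 3\,\mathbb{E}\left[\sup_t\left\vert f(t)\right\vert\sqrt{\left\langle M\right\rangle(T)}\right]$, followed by Young's inequality exactly as you wrote, so the estimate itself stands.
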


\begin{proof}
The inequality (\ref{3.19}) is obtained easily from (\ref{3.16}). Let $\left[
x,y\right]  \in A.$ Then $v_{n}\left(  t\right)  =x$,\ $\zeta_{n}\left(
t\right)  =yt$ is a generalized (determinist) solution corresponding to
$u_{0}=x$, $f\left(  t\right)  =y$ and $M=0.$ Let $\tau_{n,R}\left(
\omega\right)  =\inf\left\{  t\in\left[  0,T\right]  :\left\vert \overline
{u}_{n}\left(  \omega,t\right)  \right\vert \geq R\right\}  ,$ and $\tau
_{n,R}\left(  \omega\right)  =T$ if the set from inf is empty; $\tau_{n,R}$ is
a stopping time since $\overline{u}_{n}\in L_{ad}^{0}\left(  \Omega;C\left(
\left[  0,T\right]  ;H\right)  \right)  .$ Substituting in (\ref{3.17})
$t=\tau_{n,R}\left(  \omega\right)  $ and using the properties of the
stochastic integral, by elementary calculus we obtain:%
\[%
\begin{array}
[c]{r}%
\displaystyle\mathbb{E}\sup\limits_{t\in\left[  0,T\right]  }\left\vert
\overline{u}_{n}\left(  t\wedge\tau_{n,R}\right)  -x\right\vert ^{2}\leq
C\left(  \alpha,T\right)  [\mathbb{E}\left\vert u_{0}-x\right\vert
^{2}+\mathbb{E}\left\Vert f-y\right\Vert _{L^{1}\left(  0,T;H\right)  }%
^{2}\medskip\\
+\mathbb{E}\left\vert \overline{M}_{n}\left(  T\right)  \right\vert
^{2}],\quad\forall\left[  x,y\right]  \in A.
\end{array}
\]
Which implies for $R\nearrow+\infty$ that%
\begin{equation}%
\begin{tabular}
[c]{r}%
$\mathbb{E}\sup\limits_{t\in\left[  0,T\right]  }\left\vert \overline{u}%
_{n}\left(  t\right)  -x\right\vert ^{2}\leq C\left(  \alpha,T\right)
[\mathbb{E}\left\vert u_{0}-x\right\vert ^{2}+\mathbb{E}\left\Vert
f-y\right\Vert _{L^{1}\left(  0,T;H\right)  }^{2}+\medskip$\\
$+\mathbb{E}\left\vert \overline{M}_{n}\left(  T\right)  \right\vert
^{2}],\quad\forall\left[  x,y\right]  \in A.$%
\end{tabular}
\label{3.20}%
\end{equation}
Hence\ $\overline{u}_{n}\in L_{ad}^{2}\left(  \Omega;C\left(  \left[
0,T\right]  ;H\right)  \right)  $ and also%
\[
\overline{\eta}_{n}=u_{0}+\int\nolimits_{0}^{\cdot}fds+\overline{M}%
_{n}-\overline{u}_{n}\in L_{ad}^{2}\left(  \Omega;C\left(  \left[  0,T\right]
;H\right)  \right)
\]
Now from (\ref{3.17}) for $v_{n}=u_{n+k}$ we have%
\[%
\begin{tabular}
[c]{l}%
$\displaystyle\left\vert \overline{u}_{n}\left(  t\right)  -\overline{u}%
_{n+k}\left(  t\right)  \right\vert ^{2}\leq2\left\vert \alpha\right\vert
\int\nolimits_{0}^{t}\left\vert \overline{u}_{n}\left(  s\right)
-\overline{u}_{n+k}\left(  s\right)  \right\vert ^{2}ds+\medskip$\\
$\displaystyle\quad+2\int\nolimits_{0}^{t}\left(  \overline{u}_{n}\left(
s\right)  -\overline{u}_{n+k}\left(  s\right)  ,d\overline{M}_{n}\left(
s\right)  -d\overline{M}_{n+k}\left(  s\right)  \right)  +\left\langle
\overline{M}_{n}-\overline{M}_{n+k}\right\rangle \left(  t\right)  $%
\end{tabular}
\ \ \
\]
for all $t\in\left[  0,T\right]  ,$ a.s. $\omega\in\Omega,$ which implies by
(\ref{3.4}) and Gronwall's inequality:%
\[
\mathbb{E}\sup\limits_{t\in\left[  0,T\right]  }\left\vert \overline{u}%
_{n}\left(  t\right)  -\overline{u}_{n+k}\left(  t\right)  \right\vert
^{2}\leq C\left(  \alpha,T\right)  \mathbb{E}\left\vert \overline{M}%
_{n}\left(  T\right)  -\overline{M}_{n+k}\left(  T\right)  \right\vert ^{2}.
\]
Hence $\exists\,u,\eta\in L_{ad}^{2}\left(  \Omega;C\left(  \left[
0,T\right]  ;H\right)  \right)  $ such that as $n\rightarrow\infty:$%
\begin{equation}%
\begin{tabular}
[c]{l}%
$\overline{u}_{n}\rightarrow u$\medskip\\
$\displaystyle\overline{\eta}_{n}=u_{0}+\int\nolimits_{0}^{\cdot}%
fds+\overline{M}_{n}-\overline{u}_{n}\rightarrow\eta=u_{0}+\int\nolimits_{0}%
^{\cdot}f+M-u$\medskip\\
in $\;L_{ad}^{2}\left(  \Omega;C\left(  \left[  0,T\right]  ;H\right)
\right)  $ \ and\medskip\\
$\mathbb{E}\sup\limits_{t\in\left[  0,T\right]  }\left\vert \overline{u}%
_{n}\left(  t\right)  -u\left(  t\right)  \right\vert ^{2}\leq C\left(
\alpha,T\right)  \mathbb{E}\left\vert \overline{M}_{n}\left(  T\right)
-M\left(  T\right)  \right\vert ^{2}$%
\end{tabular}
\label{3.21}%
\end{equation}
and
\begin{equation}%
\begin{tabular}
[c]{r}%
$\mathbb{E}(\sup\limits_{t\in\left[  0,T\right]  }\left\vert u\left(
t\right)  -x\right\vert ^{2})\leq C\left(  \alpha,T\right)  [\mathbb{E}%
\left\vert u_{0}-x\right\vert ^{2}+\mathbb{E}\left\Vert f-y\right\Vert
_{L^{1}\left(  0,T;H\right)  }^{2}+$\medskip\\
$\quad\quad\quad\quad\quad\quad\quad\quad\quad+\mathbb{E}\left\vert M\left(
T\right)  \right\vert ^{2}]\quad\quad\forall\left[  x,y\right]  \in A$%
\end{tabular}
\label{3.22}%
\end{equation}
From (\ref{3.16}), (\ref{3.17}) follows by a standard calculus (using
(\ref{3.4}-c) and Gronwall's inequality) . Of course $u\left(  t\right)
\in\overline{D\left(  A\right)  }$,\ $\forall t\in\left[  0,T\right]  $, a.s.
$u\left(  0\right)  =u_{0}$, $\eta\left(  0\right)  =0$ since $\overline
{u}_{n},\overline{\eta}_{n}$ satisfy these conditions.

From Remark \ref{r2.2} we have a.s.$\,\omega\in\Omega:$%
\[%
\begin{array}
[c]{l}%
\left\vert \overline{u}_{n}\left(  t\right)  -\overline{M}_{n}\left(
t\right)  -h_{0}\right\vert ^{2}+2r_{0}\left\Vert \overline{\eta}%
_{n}\right\Vert _{BV\left(  \left[  0,T\right]  ;X^{\ast}\right)  }%
\leq\left\vert u_{0}-h_{0}\right\vert ^{2}+\medskip\\
\displaystyle\quad+2a_{1}\int\nolimits_{0}^{t}\left\vert \overline{u}%
_{n}\left(  s\right)  \right\vert ^{2}ds+2a_{2}t+2\int\nolimits_{0}^{t}\left(
f\left(  s\right)  ,\overline{u}_{n}\left(  s\right)  -\overline{M}_{n}\left(
s\right)  -h_{0}\right)  ds+\medskip\\
\displaystyle\quad+2\int\nolimits_{0}^{t}\left(  \overline{M}_{n}\left(
s\right)  ,d\overline{\eta}_{n}\left(  s\right)  \right)
\end{array}
\]
But by Proposition \ref{p3.1}-a)%
\[%
\begin{array}
[c]{l}%
\displaystyle2\int\limits_{0}^{t}\left(  \overline{M}_{n}\left(  s\right)
,d\overline{\eta}_{n}\left(  s\right)  \right)  ds=2\left(  \overline{M}%
_{n}\left(  t\right)  ,\overline{\eta}_{n}\left(  t\right)  \right)
-2\int\limits_{0}^{t}\left(  \overline{\eta}_{n}\left(  s\right)
,d\overline{M}_{n}\left(  s\right)  \right)  \medskip\\
\displaystyle=2\left(  \overline{M}_{n}\left(  t\right)  ,u_{0}+\int%
\limits_{0}^{t}f\left(  s\right)  ds+\overline{M}_{n}\left(  t\right)
-\overline{u}_{n}\left(  t\right)  \right)  -2\int\limits_{0}^{t}(u_{0}%
+\int\nolimits_{0}^{s}f\left(  \tau\right)  d\tau+\medskip\\
\displaystyle\quad+\overline{M}_{n}\left(  s\right)  -u_{n}\left(  s\right)
,d\overline{M}_{n}\left(  s\right)  )\medskip\\
\displaystyle=2\int\limits_{0}^{t}\left(  f\left(  s\right)  ,\overline{M}%
_{n}\left(  s\right)  \right)  ds+\left\vert \overline{M}_{n}\left(  t\right)
\right\vert ^{2}+\left\langle \overline{M}_{n}\right\rangle \left(  t\right)
-2\left(  \overline{M}_{n}\left(  t\right)  ,\overline{u}_{n}\left(  t\right)
\right)  \medskip\\
\displaystyle\quad+2\int\limits_{0}^{t}\left(  u_{n}\left(  s\right)
,d\overline{M}_{n}\left(  s\right)  \right)  .
\end{array}
\]
Hence%
\[%
\begin{array}
[c]{l}%
\left\vert \overline{u}_{n}\left(  t\right)  \right\vert ^{2}+2r_{0}\left\Vert
\overline{\eta}_{n}\right\Vert _{BV\left(  \left[  0,t\right]  ;X^{\ast
}\right)  }\leq2\left\vert u_{0}\right\vert ^{2}+10\left\vert h_{0}\right\vert
^{2}+2\left\vert a_{2}\right\vert T+\medskip\\
\displaystyle+9\left(  \int\nolimits_{0}^{T}\left\vert f\left(  s\right)
\right\vert ds\right)  ^{2}+\left\langle \overline{M}_{n}\right\rangle \left(
T\right)  +\frac{1}{4}\sup\limits_{s\in\left[  0,t\right]  }\left\vert
\overline{u}_{n}\left(  s\right)  \right\vert ^{2}+2\left\vert a_{1}%
\right\vert \int\nolimits_{0}^{t}\left\vert \overline{u}_{n}\left(  s\right)
\right\vert ^{2}ds+\medskip\\
\displaystyle+2\sup\limits_{s\in\left[  0,t\right]  }\left\vert \int%
\nolimits_{0}^{s}\left(  \overline{u}_{n}\left(  \tau\right)  ,d\overline
{M}_{n}\left(  \tau\right)  \right)  d\tau\right\vert
\end{array}
\]
and by (\ref{3.4}-c) we conclude: there exists a positive constant
$C_{0}=C_{0}\left(  T,r_{0},h_{0},a_{1},a_{2}\right)  $ such that
\begin{equation}%
\begin{tabular}
[c]{l}%
$\mathbb{E}\sup\limits_{t\in\left[  0,T\right]  }\left\vert \overline{u}%
_{n}\left(  t\right)  \right\vert ^{2}+\mathbb{E}(\left\Vert \overline{\eta
}_{n}\right\Vert _{BV\left(  \left[  0,T\right]  ;X^{\ast}\right)  }%
)\leq\medskip$\\
$\leq C_{0}[1+\mathbb{E}\left\vert u_{0}\right\vert ^{2}+\mathbb{E}\left\Vert
f\right\Vert _{L^{1}\left(  0,T;H\right)  }^{2}+\mathbb{E}\left\vert
\overline{M}_{n}\left(  T\right)  \right\vert ]$%
\end{tabular}
\ \ \ \label{3.23}%
\end{equation}
Finally the inequality (\ref{3.23}) and the following lemma complete the proof
of Theorem \ref{t3.4}.\hfill
\end{proof}

\begin{lemma}
\label{l3.5} If%
\[%
\begin{tabular}
[c]{l}%
$g,g_{k}\in L^{1}\left(  \Omega;C\left(  \left[  0,T\right]  ;H\right)
\right)  ,\;k\in\mathbb{N}^{\ast}\medskip$\\
$\mathbb{E}\left\Vert g_{k}\right\Vert _{BV\left(  \left[  0,T\right]
;X^{\ast}\right)  }\leq D\equiv$const,$\;\forall k\in\mathbb{N}^{\ast}%
\medskip$\\
$g_{k}\rightarrow g$\ in $C\left(  \left[  0,T\right]  ;H\right)
,\;$a.s.\ $\omega\in\Omega,$%
\end{tabular}
\]
then%
\[%
\begin{tabular}
[c]{l}%
$g\in L^{1}\left(  \Omega;BV\left(  \left[  0,T\right]  ;X^{\ast}\right)
\right)  ,\;$and$\medskip$\\
$\mathbb{E}\left\Vert g\right\Vert _{BV\left(  \left[  0,T\right]  ;X^{\ast
}\right)  }\leq D.$%
\end{tabular}
\
\]

\end{lemma}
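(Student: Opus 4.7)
The plan is to exploit the lower semicontinuity of the total variation seminorm under pointwise convergence and then apply Fatou's lemma.

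First, I would observe that the chain of continuous dense embeddings $X \subset H \cong H^{*} \subset X^{*}$ set up at the beginning of Section 2 implies that convergence in $H$ entails convergence in $X^{*}$ (with the embedding constant absorbed). Hence from the hypothesis $g_{k} \to g$ in $C([0,T];H)$ a.s., for each fixed $t \in [0,T]$ one has $g_{k}(t,\omega) \to g(t,\omega)$ in $X^{*}$ for $\omega$ in a set $\Omega_{0}$ of full probability.

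Next, I would fix $\omega \in \Omega_{0}$ and any partition $\pi : 0 = t_{0} < t_{1} < \cdots < t_{n} = T$. Since $y \mapsto \|y\|_{X^{*}}$ is continuous, the finite partition sum satisfies
\[
V_{\pi}(g)(\omega) \;=\; \sum_{i=0}^{n-1}\|g(t_{i+1},\omega) - g(t_{i},\omega)\|_{X^{*}} \;=\; \lim_{k\to\infty} \sum_{i=0}^{n-1}\|g_{k}(t_{i+1},\omega) - g_{k}(t_{i},\omega)\|_{X^{*}},
\]
and each partition sum on the right is bounded above by $\|g_{k}(\cdot,\omega)\|_{BV([0,T];X^{*})}$. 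Taking $\liminf_{k}$ and then the supremum over all partitions $\pi$ yields the pathwise lower semicontinuity
\[
\|g(\cdot,\omega)\|_{BV([0,T];X^{*})} \;\leq\; \liminf_{k\to\infty}\|g_{k}(\cdot,\omega)\|_{BV([0,T];X^{*})}, \quad \omega \in \Omega_{0}.
\]

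Finally, since $\omega \mapsto \|g_{k}(\cdot,\omega)\|_{BV([0,T];X^{*})}$ is a nonnegative measurable random variable, the standard Fatou lemma applies and gives
\[
\mathbb{E}\|g\|_{BV([0,T];X^{*})} \;\leq\; \mathbb{E}\liminf_{k\to\infty}\|g_{k}\|_{BV([0,T];X^{*})} \;\leq\; \liminf_{k\to\infty}\mathbb{E}\|g_{k}\|_{BV([0,T];X^{*})} \;\leq\; D,
\]
which simultaneously yields the integrability $g \in L^{1}(\Omega; BV([0,T];X^{*}))$ and the claimed bound. There is no serious obstacle here: the only point requiring minor care is the measurability of $\omega \mapsto \|g_{k}\|_{BV}$, which follows because the sup defining the BV norm can be restricted to partitions with rational nodes, reducing it to a countable supremum of measurable functions.
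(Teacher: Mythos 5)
Your proposal is correct and follows essentially the same route as the paper: both arguments rest on pointwise convergence of the partition sums $\sum_{i}\|g_{k}(t_{i+1})-g_{k}(t_{i})\|_{X^{\ast}}$ (via the embedding $H\subset X^{\ast}$) together with Fatou's lemma, the only cosmetic difference being that you take the supremum over partitions pathwise first (lower semicontinuity of the variation) and apply Fatou once, whereas the paper applies Fatou at each fixed partition $\Delta_{N}$ and then lets $S(g;\Delta_{N})\nearrow\|g\|_{BV([0,T];X^{\ast})}$ by monotone convergence. Your remark on measurability via rational partitions is a welcome detail the paper leaves implicit.
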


\begin{proof}
Let $\;\Delta_{N}:0=t_{0}^{\left(  N\right)  }<t_{1}^{\left(  N\right)
}<...<t_{k_{N}}^{\left(  N\right)  }=T$ with
\[%
\begin{tabular}
[c]{l}%
$\nu\left(  \Delta_{N}\right)  =\max\limits_{i}\left\vert t_{i+1}^{\left(
N\right)  }-t_{i}^{\left(  N\right)  }\right\vert \rightarrow0,\;$as
$N\rightarrow\infty\medskip$\\
$S\left(  g;\Delta_{N}\right)  =\sum\limits_{i=0}^{k_{N}-1}\left\Vert
g(t_{i+1}^{\left(  N\right)  })-g(t_{i}^{\left(  N\right)  }\right\Vert
_{X^{\ast}}\nearrow\left\Vert g\right\Vert _{BV\left(  \left[  0,T\right]
;X^{\ast}\right)  }$%
\end{tabular}
\ \
\]
Then we have%
\[
\mathbb{E}S\left(  g_{k};\Delta_{N}\right)  \leq\mathbb{E}\left\Vert
g_{k}\right\Vert _{BV\left(  \left[  0,T\right]  ;X^{\ast}\right)  }\leq
D,\;\forall k\in\mathbb{N}^{\ast},
\]
Passing to $\liminf_{k\rightarrow\infty},$ by Fatou Lemma, we obtain
\ $\mathbb{E}S\left(  g;\Delta_{N}\right)  \leq D.$

Now the monotone convergence theorem (Beppv-L\'{e}vy Lemma) as $N\rightarrow
\infty$ yields:
\[
\mathbb{E}\left\Vert g\right\Vert _{BV\left(  \left[  0,T\right]  ;X^{\ast
}\right)  }\leq D.
\]
\hfill
\end{proof}

\begin{corollary}
\label{c3.6}Under the assumptions of Theorem \ref{t3.4} if $\left(
u_{\varepsilon},\eta_{\varepsilon}\right)  $ is the solution of the
approximating problem
\begin{equation}%
\begin{tabular}
[c]{l}%
$du_{\varepsilon}\left(  t\right)  +\left(  A_{\varepsilon}^{\alpha
}u_{\varepsilon}\left(  t\right)  -\alpha u_{\varepsilon}\left(  t\right)
\right)  dt=f\left(  t\right)  dt+dM\left(  t\right)  \medskip$\\
$u_{\varepsilon}\left(  0\right)  =u_{0}\medskip$\\
$\displaystyle\eta_{\varepsilon}\left(  t\right)  =\int\nolimits_{0}%
^{t}\left(  A_{\varepsilon}^{\alpha}\left(  u_{\varepsilon}\left(  s\right)
\right)  -\alpha u_{\varepsilon}\left(  s\right)  \right)  ds$%
\end{tabular}
\ \label{3.24}%
\end{equation}
where $A_{\varepsilon}^{\alpha}$ is the Yosida approximation of the maximal
monotone operator $A+\alpha I$ and $0<\varepsilon<\frac{1}{\left\vert
\alpha\right\vert +1},$then there exists a constant $C_{0}=C_{0}\left(
T,r_{0},h_{0},a_{1},a_{2}\right)  >0$\ such that%
\begin{equation}%
\begin{tabular}
[c]{l}%
$\mathbb{E}\sup\limits_{t\in\left[  0,T\right]  }\left\vert u_{\varepsilon
}\left(  t\right)  \right\vert ^{2}+\mathbb{E}\left\Vert \eta_{\varepsilon
}\right\Vert _{BV\left(  \left[  0,T\right]  ;X^{\ast}\right)  }\leq
C_{0}(1+\mathbb{E}\left\vert u_{0}\right\vert ^{2}+\mathbb{E}\left\Vert
f\right\Vert _{L^{1}\left(  0,T;H\right)  }+\medskip$\\
$\quad\quad\quad\quad\quad\quad\quad\quad\quad\quad\quad\quad\quad\quad
\quad\quad\quad\quad+\mathbb{E}\left\vert M\left(  T\right)  \right\vert
^{2})$%
\end{tabular}
\label{3.25}%
\end{equation}
and \ $\lim\limits_{\varepsilon\searrow0}u_{\varepsilon}=u,\;\lim
\limits_{\varepsilon\searrow0}\eta_{\varepsilon}=\eta$ \ in $L_{ad}^{2}\left(
\Omega;C\left(  \left[  0,T\right]  ;H\right)  \right)  .$
\end{corollary}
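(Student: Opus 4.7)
The plan is to lift the deterministic argument of Corollary \ref{c2.4} to the stochastic setting, using Theorem \ref{t3.4} with constants that are shown to be independent of $\varepsilon$. First I would observe that since $A^\alpha_\varepsilon$ is globally Lipschitz on $H$, the operator $B_\varepsilon := A^\alpha_\varepsilon - \alpha I$ is single-valued, maximal monotone up to the constant $\alpha$, and the SDE (\ref{3.24}) has a classical strong solution $u_\varepsilon \in L^2_{ad}(\Omega;C([0,T];H))$. By construction $(u_\varepsilon,\eta_\varepsilon)$ coincides with $GS(B_\varepsilon; u_0,f,M)$.

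The key uniform estimate comes from the inequality established inside the proof of Corollary \ref{c2.4}: since $A^\alpha_\varepsilon x - \alpha J^\alpha_\varepsilon x \in A(J^\alpha_\varepsilon x)$ and $|J^\alpha_\varepsilon x|\leq|x|+(1+|\varepsilon-1|)|J^\alpha_1 0|$, assumption (H$_1$-ii) for $A$ yields
\[
r_0\|A^\alpha_\varepsilon x - \alpha x\|_{X^\ast} \leq (A^\alpha_\varepsilon x - \alpha x,\, x - h_0) + b_1|x|^2 + b_2,
\]
with constants $b_1,b_2$ depending only on $\alpha,h_0,a_1,a_2$, and in particular independent of $\varepsilon$ (for $0<\varepsilon<1/(|\alpha|+1)$). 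Thus $B_\varepsilon$ satisfies (H$_1$) with a common set of constants $(r_0,h_0,b_1,b_2)$. Applying estimate (\ref{3.18}c) of Theorem \ref{t3.4} to the equation driven by $B_\varepsilon$ yields (\ref{3.25}) with a single constant $C_0=C_0(T,r_0,h_0,a_1,a_2)$.

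For the convergence statement, I would use a triangle-inequality argument analogous to the one at the end of the proof of Corollary \ref{c2.4}. Pick $\overline{M}_n$ satisfying (\ref{3.14}), and let $u^n = GS(A;u_0,f,\overline{M}_n)$, and let $u^n_\varepsilon$ be the solution of (\ref{3.24}) with $M$ replaced by $\overline{M}_n$. Two ingredients then combine:
\begin{itemize}
\item For the $\varepsilon$-direction at fixed $n$: since $\overline{M}_n(\omega,\cdot)\in C([0,T];X)$ a.s., Corollary \ref{c2.4} applies pathwise and gives $u^n_\varepsilon(\omega,\cdot)\to u^n(\omega,\cdot)$ in $C([0,T];H)$ a.s. The uniform bound (\ref{3.25}) (applied to $B_\varepsilon$ with driver $\overline{M}_n$) furnishes $L^2(\Omega)$-domination, so by dominated convergence $u^n_\varepsilon\to u^n$ in $L^2_{ad}(\Omega;C([0,T];H))$.
\item For the $n$-direction uniformly in $\varepsilon$: applying estimate (\ref{3.19}) of Theorem \ref{t3.4} to $B_\varepsilon$ (with its $\varepsilon$-uniform constants) gives
\[
\mathbb{E}\sup_{t\in[0,T]}|u_\varepsilon(t)-u^n_\varepsilon(t)|^2 \leq C(\alpha,T)\,\mathbb{E}|M(T)-\overline{M}_n(T)|^2,
\]
which tends to $0$ as $n\to\infty$ uniformly in $\varepsilon$. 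The same bound, now applied to $A$ itself via (\ref{3.19}), yields $\mathbb{E}\sup|u(t)-u^n(t)|^2\to0$.
\end{itemize}
The triangle inequality $\|u_\varepsilon-u\|\leq \|u_\varepsilon-u^n_\varepsilon\|+\|u^n_\varepsilon-u^n\|+\|u^n-u\|$ then forces $u_\varepsilon\to u$ in $L^2_{ad}(\Omega;C([0,T];H))$. Convergence of $\eta_\varepsilon$ to $\eta$ follows immediately from the identity $\eta_\varepsilon-\eta = (u-u_\varepsilon)+(M-M)=u-u_\varepsilon$ implied by $s_3)$ and (\ref{3.24}).

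The main obstacle I anticipate is the first bullet: justifying that pathwise deterministic convergence from Corollary \ref{c2.4} upgrades to $L^2_{ad}(\Omega;C([0,T];H))$-convergence. The uniform bound (\ref{3.25}) gives $L^2$-boundedness of $\sup_t|u^n_\varepsilon-u^n|$ uniformly in $\varepsilon$, so Vitali/dominated convergence does apply, but this step must be written carefully because the $\varepsilon$-pathwise convergence is not itself uniform in $\omega$. Once this is in place, everything else is a routine triangle-inequality limit argument, and the estimate (\ref{3.25}) follows directly from (\ref{3.18}) specialized to $B_\varepsilon$.
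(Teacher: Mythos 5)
Your overall strategy coincides with the paper's own proof: the same $\varepsilon$-uniform inequality $r_0\left\Vert (A_{\varepsilon}^{\alpha}-\alpha I)x\right\Vert _{X^{\ast}}\leq\left(  (A_{\varepsilon}^{\alpha}-\alpha I)x,x-h_{0}\right)  +b_{1}\left\vert x\right\vert ^{2}+b_{2}$ (the paper's (\ref{3.26})), the same three-term decomposition through the approximating martingales $\overline{M}_{n}$, the same It\^{o}/Gronwall bound $\mathbb{E}\sup_{t}\left\vert u_{\varepsilon}(t)-\overline{u}_{\varepsilon n}(t)\right\vert ^{2}\leq C\left(  \alpha,T\right)  \mathbb{E}\left\vert M(T)-\overline{M}_{n}(T)\right\vert ^{2}$ for the $n$-direction (the paper writes out It\^{o}'s formula where you cite (\ref{3.19}) for $A_{\varepsilon}^{\alpha}-\alpha I$; these are the same estimate), and the same final limsup/triangle argument. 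Deriving (\ref{3.25}) by applying (\ref{3.18}-c) to the single-valued $\alpha$-maximal monotone operator $A_{\varepsilon}^{\alpha}-\alpha I$, whose (H$_{1}$)-constants are $\varepsilon$-independent for $0<\varepsilon<1/(\left\vert \alpha\right\vert +1)$, is a legitimate shortcut; the paper instead redoes the Energy Equality for $u_{\varepsilon}-h_{0}$ and inserts (\ref{3.26}), which is the same computation carried out one level down.

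However, the step you flag as the main obstacle contains a genuine error as written. To upgrade the pathwise convergence $\overline{u}_{\varepsilon n}\left(  \omega,\cdot\right)  \rightarrow\overline{u}_{n}\left(  \omega,\cdot\right)  $ in $C\left(  \left[  0,T\right]  ;H\right)  $ (from Corollary \ref{c2.4}) to convergence in $L_{ad}^{2}\left(  \Omega;C\left(  \left[  0,T\right]  ;H\right)  \right)  $, you invoke Vitali/dominated convergence on the strength of the bound $\sup_{\varepsilon}\mathbb{E}\,y_{\varepsilon,n}^{2}<\infty$ for $y_{\varepsilon,n}=\sup_{t}\left\vert \overline{u}_{\varepsilon n}(t)-\overline{u}_{n}(t)\right\vert $, furnished by (\ref{3.25}). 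But boundedness of $\left\{  y_{\varepsilon,n}\right\}  _{\varepsilon}$ in $L^{2}\left(  \Omega\right)  $ yields uniform integrability of $y_{\varepsilon,n}$, hence only $\mathbb{E}\,y_{\varepsilon,n}\rightarrow0$; it does \emph{not} yield uniform integrability of $y_{\varepsilon,n}^{2}$, which is what $L^{2}$-convergence requires. Your aside that the difficulty lies in the non-uniformity of the pathwise convergence in $\omega$ misdiagnoses the problem: non-uniformity in $\omega$ is harmless for dominated convergence, and the missing ingredient is an integrable dominating function. The paper closes exactly this hole: from Proposition \ref{p2.1} (c$_{1}$) and the deterministic estimates in the proof of Corollary \ref{c2.4} it extracts, for each fixed $n$, a pathwise bound $\left\vert y_{\varepsilon,n}\left(  \omega\right)  \right\vert ^{2}\leq D_{n}\left(  \omega\right)  $ valid for all small $\varepsilon$, with $\mathbb{E}D_{n}<\infty$, and then applies Lebesgue's theorem. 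Your argument is repaired by replacing the $L^{2}$-boundedness claim with this pathwise domination (or by proving a $\sup_{\varepsilon}$ bound in some $L^{2+\delta}$, which would give the required uniform integrability of the squares); everything else in your proposal then goes through as in the paper.
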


\begin{proof}
As we could see in the proof of Corollary \ref{c2.4}%
\begin{equation}
r_{0}\left\Vert (A_{\varepsilon}^{\alpha}-\alpha I)x\right\Vert _{X^{\ast}%
}\leq\left(  \left(  A_{\varepsilon}^{\alpha}-\alpha I\right)  x,x-h_{0}%
\right)  +b_{1}\left\vert x\right\vert ^{2}+b_{2} \label{3.26}%
\end{equation}
with $b_{i}=b_{i}\left(  \alpha,h_{0},a_{1},a_{2}\right)  >0.$ By Energy
Equality for $u_{\varepsilon}-h_{0}$ we have:%
\[%
\begin{array}
[c]{l}%
\displaystyle\left\vert u_{\varepsilon}\left(  t\right)  -h_{0}\right\vert
^{2}+2\int\nolimits_{0}^{t}\left(  u_{\varepsilon}\left(  s\right)
-h_{0},d\eta_{\varepsilon}\left(  s\right)  \right)  =\left\vert u_{0}%
-h_{0}\right\vert ^{2}+\medskip\\
\displaystyle\quad+2\int\nolimits_{0}^{t}\left(  f\left(  s\right)
,u_{\varepsilon}\left(  s\right)  -h_{0}\right)  ds+2\int\nolimits_{0}%
^{t}\left(  u_{\varepsilon}\left(  s\right)  -h_{0},dM\left(  s\right)
\right)  +\left\langle M\right\rangle \left(  t\right)
\end{array}
\]
which implies, by (\ref{3.26}) and a standard calculus the inequality
(\ref{3.25}). If we approximate the martingale $M$ by $\overline{M}_{n}$ as in
Theorem \ref{t3.4} then for the corresponding solution $\overline
{u}_{\varepsilon n},\overline{\eta}_{\varepsilon n}$ of (\ref{3.25}) the
inequality (\ref{3.25}) is fulfilled . We know, by Corollary \ref{c2.4}, that
for every $n$ fixed $\;y_{\varepsilon,n}\left(  \omega\right)  =\sup
\limits_{t\in\left[  0,T\right]  }\left\vert \overline{u}_{\varepsilon
n}\left(  \omega,t\right)  -\overline{u}_{n}\left(  \omega,t\right)
\right\vert $ satisfies \ $\lim\limits_{\varepsilon\searrow0}y_{\varepsilon
,n}\left(  \omega\right)  =0$ \ a.s. $\omega\in\Omega$, and by Proposition
\ref{p2.1} and the proof of Corollary \ref{c2.4} $\left\vert y_{\varepsilon
,n}\left(  \omega\right)  \right\vert ^{2}\leq D_{n}\left(  \omega\right)
,\;\mathbb{E}D_{n}<\infty$. Hence $\exists\lim\limits_{\varepsilon\searrow
0}u_{\varepsilon,n}=\overline{u}_{n}$ in $L_{ad}^{2}\left(  \Omega;C\left(
\left[  0,T\right]  ;H\right)  \right)  .$ Also by Ito's formula we have%
\[%
\begin{array}
[c]{l}%
\displaystyle\left\vert u_{\varepsilon}\left(  t\right)  -\overline
{u}_{\varepsilon n}\left(  t\right)  \right\vert ^{2}\leq2\left\vert
\alpha\right\vert \int\nolimits_{0}^{t}\left\vert u_{\varepsilon}\left(
s\right)  -\overline{u}_{\varepsilon n}\left(  s\right)  \right\vert
^{2}ds+\medskip\\
\displaystyle\quad+2\int\nolimits_{0}^{t}\left(  u_{\varepsilon}\left(
s\right)  -\overline{u}_{\varepsilon,n}\left(  s\right)  ,dM\left(  s\right)
-\overline{M}_{n}\left(  s\right)  \right)  +\left\langle M-\overline{M}%
_{n}\right\rangle \left(  t\right)  ,
\end{array}
\]
which implies%
\[
\mathbb{E}\sup\limits_{t\in\left[  0,T\right]  }\left\vert u_{\varepsilon
}\left(  t\right)  -\overline{u}_{\varepsilon,n}\left(  t\right)  \right\vert
^{2}\leq C\left(  \alpha,T\right)  \mathbb{E}\left\vert M\left(  T\right)
-\overline{M}_{n}\left(  T\right)  \right\vert ^{2}%
\]
Finally since%
\[
\left\vert u_{\varepsilon}-u\right\vert ^{2}\leq3\left(  \left\vert
u_{\varepsilon}-\overline{u}_{\varepsilon n}\right\vert ^{2}+\left\vert
\overline{u}_{\varepsilon n}-\overline{u}_{n}\right\vert ^{2}+\left\vert
\overline{u}_{n}-u\right\vert ^{2}\right)
\]
then%
\[
\limsup_{\varepsilon\searrow0}\mathbb{E}\sup\limits_{t\in\left[  0,T\right]
}\left\vert u_{\varepsilon}\left(  t\right)  -u\left(  t\right)  \right\vert
^{2}\leq C_{1}\left(  r,\alpha,T\right)  \left[  \mathbb{E}\left\vert M\left(
t\right)  -\overline{M}_{n}\left(  t\right)  \right\vert ^{2}\right]
\]
for all $n\in N^{\ast},$ that is $\exists\lim\limits_{\varepsilon\searrow
0}u_{\varepsilon}=u$ and $\exists\lim\limits_{\varepsilon\searrow0}%
\eta_{\varepsilon}=\lim\limits_{\varepsilon\searrow0}\left(  u_{0}%
+\int\nolimits_{0}^{\cdot}fds+M-u_{\varepsilon}\right)  =\eta$ in $L_{ad}%
^{2}\left(  \Omega;C\left(  \left[  0,T\right]  ;H\right)  \right)  .$\hfill
\end{proof}

\subsection{Monotone SDE with state depending diffusion}

We shall work in the context of the spaces $X\subset H\subset X^{\ast}$
introduced in Subsection 2.1, and in the context of the stochastic elements
defined in Subsection 3.1 as the stochastic basis $(\Omega,\mathcal{F}%
,\mathbb{P},(\mathcal{F}_{t})_{t\geq0})$, the $U$-Hilbert valued Wiener
process $\left\{  W\left(  t\right)  ,t\geq0\right\}  $ with the covariance
operator $Q\in L\left(  U,U\right)  $, the stochastic integral etc. Consider
the multivalued stochastic differential equation:%
\begin{equation}
\left\{
\begin{tabular}
[c]{l}%
$du\left(  t\right)  +Au\left(  t\right)  dt\ni f\left(  t,u\left(  t\right)
\right)  dt+B\left(  t,u\left(  t\right)  \right)  dW\left(  t\right)
$\medskip\\
$u\left(  0\right)  =u_{0},\quad\quad t\in\left[  0,T\right]  ,$%
\end{tabular}
\ \right.  \label{3.27}%
\end{equation}
where we put the assumptions:%

\[
\left(  \widetilde{H}_{0}\right)
\begin{tabular}
[c]{l}%
$\quad u_{0}\in L^{2}\left(  \Omega,\mathcal{F}_{0},\mathbb{P};\overline
{D\left(  A\right)  }\right)  $%
\end{tabular}
\
\]%
\[
\left(  \widetilde{H}_{1}\right)  \left\{
\begin{tabular}
[c]{l}%
$i)\quad A:H\rightarrow2^{H}$ is a maximal monotone operator,\medskip\\
$ii)\quad\exists r_{0},a_{1},a_{2}>0,\;\exists h_{0}\in H$ such that\medskip\\
\quad\quad\ $r_{0}\left\Vert y\right\Vert _{\ast}\leq\left(  y,x-h_{0}\right)
+a_{1}\left\vert x\right\vert ^{2}+a_{2}$ \ for all $\forall\left[
x,y\right]  \in A\;\left(  \footnotemark\right)  $
\end{tabular}
\ \ \right.
\]
\footnotetext{see Theorem 2.3}%
\[
\left(  \widetilde{H}_{2}\right)  \left\{
\begin{tabular}
[c]{l}%
$i)\,f:\Omega\times\left[  0,T\right]  \times H\longmapsto H$ is progressively
measurable i.e.\medskip\\
\thinspace\quad\thinspace$\forall\,t\in\left[  0,T\right]  ,\;\;f\cdot
1_{\left[  0,t\right]  }$ is $\left(  \mathcal{F}_{t}\otimes\mathcal{B}%
_{\left[  0,t\right]  }\otimes\mathcal{B}_{H},\mathcal{B}_{H}\right)
$-measurable,\medskip\\
$ii)\,\exists\,L_{1},b_{1}>0$ such that a.s. $\omega\in\Omega:$\medskip\\
\quad\quad\thinspace$\left\vert f(t,u)-f\left(  t,v\right)  \right\vert \leq
L_{1}\left\vert u-v\right\vert ,$\medskip\\
\quad\quad\thinspace$\left\vert f\left(  t,v\right)  \right\vert ^{2}\leq
b_{1}\left(  1+\left\vert u\right\vert ^{2}\right)  ,\;$for all $u,v\in H,$
a.e. $t\in\left]  0,T\right[  .$%
\end{tabular}
\ \ \right.
\]%
\[
\left(  \widetilde{H}_{3}\right)  \left\{
\begin{tabular}
[c]{l}%
$i)\,B:\Omega\times\left[  0,T\right]  \times H\rightarrow L^{2}\left(
U_{0},H\right)  $ is progressively measurable,\medskip\\
$ii)\,\exists L,b>0$ such that a.s. $\omega\in\Omega:$\medskip\\
\quad\quad$\left\vert B\left(  t,u\right)  -B\left(  t,v\right)  \right\vert
_{Q}^{2}\leq L\left\vert u-v\right\vert ^{2}$\medskip\\
\quad\quad$\left\vert B\left(  t,u\right)  \right\vert _{Q}^{2}\leq b\left(
1+\left\vert u\right\vert ^{2}\right)  $ \quad for all $u,v\in H,$ a.e.
$t\in\left]  0,T\right[  $%
\end{tabular}
\ \ \right.
\]
We shall try to find a solution $u\in L_{ad}^{2}\left(  \Omega;C\left(
\left[  0,T\right]  ;H\right)  \right)  $ for the equation (\ref{3.27}). First
we remark that for a such stochastic process $u$ we have:%
\begin{align*}
f\left(  \cdot,u\left(  \cdot\right)  \right)   &  \in L_{ad}^{2}\left(
\Omega\times\left]  0,T\right[  ;H\right)  ,\\
B\left(  \cdot,u\left(  \cdot\right)  \right)   &  \in L_{ad}^{2}\left(
\Omega\times\left]  0,T\right[  ;\mathcal{L}^{2}\left(  U_{0},H\right)
\right)
\end{align*}
and if we denote%
\[
F\left(  t;u\right)  =\int_{0}^{t}f\left(  s,u\left(  s\right)  \right)
ds\quad and\quad M\left(  t;u\right)  =\int_{0}^{t}B\left(  s,u\left(
s\right)  \right)  dW\left(  s\right)
\]
then%
\[
F\left(  \cdot;u\right)  \in L_{ad}^{2}\left(  \Omega;C\left(  \left[
0,T\right]  ;H\right)  \right)  \quad and\quad M\left(  \cdot;u\right)  \in
M^{2}\left(  0,T;H\right)  .
\]

\begin{definition}
\label{d3.2}A stochastic process $u$\ is a (generalized) solution of
multivalued SDE (\ref{3.27}) if%
\begin{equation}%
\begin{tabular}
[c]{l}%
$a)\quad u\in L_{ad}^{2}\left(  \Omega;C\left(  \left[  0,T\right]  ;H\right)
\right)  ,\;\;u\left(  0\right)  =u_{0},$\medskip\\
\quad\quad\quad\quad$\quad u\left(  t\right)  $ $\in\overline{D\left(
A\right)  },\;\forall\,t\in\left[  0,T\right]  ,$ a.s. $\omega\in\Omega
,$\medskip\\
$b)\quad\eta=u_{0}+F\left(  \cdot,u\right)  +M\left(  \cdot,u\right)  -u\in
L^{1}\left(  \Omega;BV\left(  \left[  0,T\right]  ;X^{\ast}\right)  \right)
$\medskip\\
$c)\quad\mathbb{E}\left\vert u\left(  t\right)  -z\left(  t\right)
\right\vert ^{2}\leq\mathbb{E}\left\vert u\left(  s\right)  -z\left(
s\right)  \right\vert ^{2}+$\medskip\\
$\displaystyle+2\mathbb{E}\int\nolimits_{s}^{t}\left(  f\left(  \tau,u\right)
-g\left(  \tau\right)  ,u\left(  \tau\right)  -z\left(  \tau\right)  \right)
d\tau+\mathbb{E}\int\nolimits_{s}^{t}\left\vert B\left(  \tau,u\right)
-D\left(  \tau\right)  \right\vert _{Q}^{2}d\tau,$\medskip\\
$\;\quad$for all\ \ $0\leq s\leq t\leq T,\;\forall\,g\in L_{ad}^{2}\left(
\Omega\times\left]  0,T\right[  ;H\right)  ,$\medskip\\
$\;\;\forall\,D\in L_{ad}^{2}\left(  \Omega\times\left]  0,T\right[
;\mathcal{L}^{2}\left(  U_{0},H\right)  \right)  ,\;\forall\;z\in L_{ad}%
^{2}\left(  \Omega;C\left(  \left[  0,T\right]  ;\overline{D\left(  A\right)
}\right)  \right)  $\medskip\\
\quad\quad$\displaystyle$such that $z=GS\left(  A;z\left(  0\right)
,g,\int\nolimits_{0}^{\cdot}D\left(  s\right)  dW\left(  s\right)  \right)  .$%
\end{tabular}
\ \label{3.28}%
\end{equation}

\end{definition}

The existence and uniqueness result will be obtained from the following two propositions

\begin{proposition}
\label{p3.7}Let ($\widetilde{H}_{0}$-$\widetilde{H}_{3}$) be satisfied. Then
the following problem has a unique solution:%
\begin{equation}%
\begin{tabular}
[c]{l}%
$a)\quad u\in L_{ad}^{2}\left(  \Omega;C\left(  \left[  0,T\right]  ;H\right)
\right)  ,\;\quad\quad u\left(  0\right)  =u_{0},$\ \medskip\\
\quad\quad\quad$u\left(  t\right)  \in\overline{D\left(  A\right)  },\;\forall
t\in\left[  0,T\right]  ,$ a.s. $\omega\in\Omega,$\medskip\\
$b)\quad\eta=u_{0}+F\left(  \cdot;u\right)  +M\left(  \cdot;u\right)  -u\in
L^{1}\left(  \Omega;BV\left(  \left[  0,T\right]  ;X^{\ast}\right)  \right)
,$\medskip\\
$c)\quad u=GS\left(  u_{0},f\left(  \cdot,u\right)  ,M\left(  \cdot;u\right)
\right)  .$%
\end{tabular}
\ \label{3.29}%
\end{equation}

\end{proposition}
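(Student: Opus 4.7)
The plan is to solve (\ref{3.29}) via Banach's fixed-point theorem on $\mathcal{X}=L_{ad}^{2}(\Omega;C([0,T];H))$. Define $\Phi:\mathcal{X}\to\mathcal{X}$ by $\Phi(v)=GS(A;u_{0},f(\cdot,v(\cdot)),M(\cdot;v))$ with $M(t;v)=\int_{0}^{t}B(s,v(s))\,dW(s)$. For $v\in\mathcal{X}$, the growth and measurability conditions in $(\widetilde{H}_{2})$ give $f(\cdot,v)\in L_{ad}^{2}(\Omega;L^{1}(0,T;H))$ (via Cauchy--Schwarz in $t$ together with the bound $|f(t,v)|^{2}\leq b_{1}(1+|v|^{2})$), while $(\widetilde{H}_{3})$ together with (\ref{3.7}) yield $M(\cdot;v)\in\mathcal{M}^{2}(0,T;H)$. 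Combined with $(\widetilde{H}_{0})$ and $(\widetilde{H}_{1})$, the hypotheses (A$_{1}$), (A$_{2}$) of Theorem \ref{t3.4} are satisfied (with $\alpha=0$ since $A$ is just maximal monotone here), so $\Phi(v)$ is well-defined, lies in $\mathcal{X}$, and the associated $\eta$ belongs to $L^{1}(\Omega;BV([0,T];X^{\ast}))$; in particular any fixed point $u=\Phi(u)$ automatically satisfies (\ref{3.29}a) and (\ref{3.29}b).

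Apply the continuous-dependence inequality (\ref{3.19}) of Theorem \ref{t3.4} with the two input triples $(u_{0},f(\cdot,v),M(\cdot;v))$ and $(u_{0},f(\cdot,\tilde v),M(\cdot;\tilde v))$. Lipschitz continuity of $f$ combined with Cauchy--Schwarz bounds the drift contribution by $TL_{1}^{2}\int_{0}^{t}\mathbb{E}\sup_{\tau\leq s}|v-\tilde v|^{2}\,ds$, while It\^o isometry together with the Lipschitz bound on $B$ controls the martingale contribution by $L\int_{0}^{t}\mathbb{E}\sup_{\tau\leq s}|v-\tilde v|^{2}\,ds$. Setting $K:=C(0,T)(TL_{1}^{2}+L)$, we obtain
\[
\mathbb{E}\sup_{s\leq t}|\Phi(v)(s)-\Phi(\tilde v)(s)|^{2}\ \leq\ K\int_{0}^{t}\mathbb{E}\sup_{\tau\leq s}|v(\tau)-\tilde v(\tau)|^{2}\,ds.
\]
Iterating this convolution inequality yields
\[
\mathbb{E}\sup_{s\leq T}|\Phi^{n}(v)(s)-\Phi^{n}(\tilde v)(s)|^{2}\ \leq\ \frac{(KT)^{n}}{n!}\,\|v-\tilde v\|_{\mathcal{X}}^{2},
\]
so $\Phi^{n}$ is a strict contraction on $\mathcal{X}$ for $n$ sufficiently large. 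Banach's theorem then produces a unique fixed point $u\in\mathcal{X}$, which is the required solution of (\ref{3.29}); uniqueness is a direct byproduct of the contraction.

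The main technical issue is essentially bookkeeping, since the heavy lifting---the very construction of the operator $GS$ and the Lipschitz bound (\ref{3.19}) together with the $BV$-estimate (\ref{3.18})---has already been done in Theorem \ref{t3.4}. Two points still require some care: the adaptedness of $\Phi(v)$, which follows from the continuity of the Skorohod mapping noted after Definition \ref{d3.1} applied pathwise; and the clean iteration of the Volterra-type inequality, which rests on the nested $\sup_{\tau\leq s}$ structure on the right-hand side. No genuinely new analytic ideas beyond standard Picard iteration are needed; the multivalued/variational content has been absorbed into the black-box use of Theorem \ref{t3.4}.
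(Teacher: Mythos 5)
Your proposal is correct and takes essentially the same route as the paper: both construct the map $v\mapsto GS\left(A;u_{0},f\left(\cdot,v\right),M\left(\cdot;v\right)\right)$ by verifying (A$_{1}$)--(A$_{2}$) of Theorem \ref{t3.4} (with $\alpha=0$), and both extract the Volterra-type stability estimate from (\ref{3.19}) to run a Banach fixed-point argument on $L_{ad}^{2}\left(\Omega;C\left(\left[0,T\right];H\right)\right)$. The only difference is cosmetic: the paper makes the map itself a contraction in the equivalent exponentially weighted norm $|||v|||_{a}=\sup_{t}e^{-at}\left(\mathbb{E}\sup_{s\in\left[0,t\right]}\left\vert v\left(s\right)\right\vert^{2}\right)^{1/2}$, whereas you iterate the inequality to make $\Phi^{n}$ contract via the $\left(KT\right)^{n}/n!$ factor --- two interchangeable standard devices.
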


\begin{proof}
Under the assumptions ($\widetilde{H}_{0}$-$\widetilde{H}_{3}$) it follows
that for every $v$ from the space $L_{ad}^{2}\left(  \Omega;C\left(  \left[
0,T\right]  ;H\right)  \right)  $ the triplet $\left(  u_{0},f\left(
\cdot,v\right)  ,M\left(  \cdot;v\right)  \right)  $ satisfies the hypotheses
(A$_{2}$) of Theorem \ref{t3.4}. Hence there exists a corresponding unique
generalized solution $\left(  \overline{v},\overline{\eta}\right)  .$ This
solution satisfies:%
\[%
\begin{tabular}
[c]{l}%
$\overline{v}\in L_{ad}^{2}\left(  \Omega;C\left(  \left[  0,T\right]
;H\right)  \right)  ,\;\overline{v}\left(  0\right)  =u_{0},$\medskip\\
$\overline{v}\left(  t\right)  \in\overline{D\left(  A\right)  },\;\forall
t\in\left[  0,T\right]  ,$ a.s. $\omega\in\Omega,$\medskip\\
$\overline{\eta}=u_{0}+F\left(  \cdot;v\right)  +M\left(  \cdot;v\right)
-\overline{u}\in L_{ad}^{2}\left(  \Omega;C\left(  \left[  0,T\right]
;H\right)  \right)  \cap$\medskip\\
$\quad\quad\quad\quad\quad\quad\quad\quad\quad\quad\quad\quad\quad\quad\quad
L^{1}\left(  \Omega;BV\left(  \left[  0,T\right]  ;X^{\ast}\right)  \right)
.$%
\end{tabular}
\ \
\]
Denote the mapping \ $\Lambda:L_{ad}^{2}\left(  \Omega;C\left(  \left[
0,T\right]  ;H\right)  \right)  \rightarrow L_{ad}^{2}\left(  \Omega;C\left(
\left[  0,T\right]  ;H\right)  \right)  $, $\Lambda\left(  v\right)
=\overline{v}$. We show that $\Lambda$ has a unique fix point. Let $a>0.$ For
$v\in L_{ad}^{2}\left(  \Omega;C\left(  \left[  0,T\right]  ;H\right)
\right)  $ denote \ $\left\Vert v\right\Vert _{t}=\left(  \mathbb{E}%
\sup\limits_{s\in\left[  0,t\right]  }\left\vert v\left(  s\right)
\right\vert ^{2}\right)  ^{1/2}$ and \ $|||v|||_{a}=\sup\limits_{t\in\left[
0,T\right]  }\left(  e^{-at}\left\Vert v\right\Vert _{t}\right)  .$ The usual
norm, $\left\Vert \,\cdot\,\right\Vert _{T},$ on $L_{ad}^{2}\left(
\Omega;C\left(  \left[  0,T\right]  ;H\right)  \right)  $ is equivalent to
$|||\,\cdot\,|||_{a}$ since \ $e^{-aT}\left\Vert v\right\Vert _{T}%
\leq|||v|||_{a}\leq\left\Vert v\right\Vert _{T}.$ From Theorem \ref{t3.4} (the
inequality (\ref{3.19})) we have:%
\begin{align*}
\left\Vert \overline{v}_{1}-\overline{v}_{2}\right\Vert _{t}^{2}  &  \leq
C\left(  \alpha,T\right)  [\mathbb{E}\int_{0}^{t}\left\vert f\left(
v_{1}\right)  -f\left(  v_{2}\right)  \right\vert ^{2}ds\\
&  \;\;\;\;\;+\mathbb{E}\int_{0}^{t}\left\vert B\left(  v_{1}\right)
-B\left(  v_{2}\right)  \right\vert _{Q}^{2}ds]\\
&  \leq C(L_{1}^{2}+L)\int_{0}^{t}\left\Vert v_{1}-v_{2}\right\Vert _{s}%
^{2}ds\\
&  =C_{1}\int_{0}^{t}e^{-2as}\left\Vert v_{1}-v_{2}\right\Vert _{s}^{2}%
e^{2as}ds\\
&  \leq C_{1}|||v_{1}-v_{2}|||_{a}^{2}\frac{e^{2at}-1}{2a}%
\end{align*}
which gets, as $a\geq2\left(  C_{1}+1\right)  $:%
\[
|||\overline{v}_{1}-\overline{v}_{2}|||_{a}\leq\frac{1}{2}|||v_{1}%
-v_{2}|||_{a}%
\]
that is $\Lambda$ is a contraction mapping in $L_{ad}^{2}\left(
\Omega;C\left(  \left[  0,T\right]  ;H\right)  \right)  .$ By Banach fixed
point theorem a unique $v\in L_{ad}^{2}\left(  \Omega;C\left(  \left[
0,T\right]  ;H\right)  \right)  $ exists such that $\Lambda v=v$. Hence the
problem (\ref{3.29}) has a unique solution.\hfill
\end{proof}

\begin{proposition}
\label{p3.8}Let the hypotheses ($\widetilde{H}_{0},...,\widetilde{H}_{3}$) be
satisfied. Then $u$ is the solution of the equation (\ref{3.27}) in the sense
of Definition \ref{d3.2} if and only if $u$ is solution of the problem
(\ref{3.29}).
\end{proposition}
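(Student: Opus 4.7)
\textbf{Proof plan for Proposition \ref{p3.8}.}

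The plan is to show that the It\^o-type variational inequality in (\ref{3.28}-c) is essentially the expectation of the pathwise monotonicity estimate (\ref{3.16}) furnished by Proposition \ref{p3.2}, and is therefore equivalent to the Skorohod fixed-point identity (\ref{3.29}-c). The bridge is that both $M(\cdot,u)=\int_0^\cdot B(s,u)\,dW$ and the test martingale $N:=\int_0^\cdot D(s)\,dW$ lie in $\mathcal M^2(0,T;H)$, so Proposition \ref{p3.2} is applicable verbatim.

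First I would prove the direction $(\ref{3.29})\Rightarrow(\ref{3.28})$. Assume $u$ solves (\ref{3.29}), and pick an admissible test triple $(g,D,z)$, so that $z=GS(A;z(0),g,N)$ with $N\in\mathcal M^2(0,T;H)$ as above. Both $u$ and $z$ are generalized stochastic solutions in the sense of Definition \ref{d3.1} (with driving data $(u_0,f(\cdot,u),M(\cdot,u))$ and $(z(0),g,N)$ respectively), so Proposition \ref{p3.2}(a) applies with $\alpha=0$ — since $(\widetilde H_1)$ makes $A$ merely maximal monotone — and yields
\[
\begin{aligned}
|u(t)-z(t)|^2 &\le |u(s)-z(s)|^2 + 2\int_s^t\bigl(f(\tau,u)-g(\tau),\,u(\tau)-z(\tau)\bigr)d\tau\\
&\quad + 2\int_s^t\bigl(u(\tau)-z(\tau),\,d(M(\cdot,u)-N)(\tau)\bigr) \\
&\quad + \langle M(\cdot,u)-N\rangle(t)-\langle M(\cdot,u)-N\rangle(s).
\end{aligned}
\]
The martingale $M(\cdot,u)-N=\int_0^\cdot(B(\cdot,u)-D)\,dW$ has quadratic variation $\int_0^\cdot|B(\cdot,u)-D|_Q^{\,2}ds$, and by (\ref{3.4}-c) the cross stochastic integral is an $\mathcal M^1(0,T;\mathbb{R})$-martingale (its integrand $u-z$ belongs to $L_{ad}^2(\Omega;C([0,T];H))$). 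Taking $\mathbb E$ kills the martingale term and produces (\ref{3.28}-c) exactly.

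For the converse $(\ref{3.28})\Rightarrow(\ref{3.29})$, let $u$ satisfy Definition \ref{d3.2}. Proposition \ref{p3.7} provides the unique solution $v$ of (\ref{3.29}), so $v=GS(A;u_0,f(\cdot,u),M(\cdot,u))$. Plug $v$ itself into (\ref{3.28}-c) as the test process by choosing $z:=v$, $g:=f(\cdot,u)$, $D:=B(\cdot,u)$; admissibility is automatic since $v(0)=u_0$, $v\in L_{ad}^2(\Omega;C([0,T];\overline{D(A)}))$ and, by construction, $v=GS(A;v(0),g,\int_0^\cdot D\,dW)$. All three source terms on the right-hand side of (\ref{3.28}-c) collapse to zero and one is left with
\[
\mathbb E|u(t)-v(t)|^2\le \mathbb E|u(s)-v(s)|^2,\qquad 0\le s\le t\le T.
\]
Setting $s=0$ and using $u(0)=v(0)=u_0$ gives $\mathbb E|u(t)-v(t)|^2=0$ for every $t$; continuity of the processes then yields $u\equiv v$ in $L_{ad}^2(\Omega;C([0,T];H))$, so $u$ solves (\ref{3.29}).

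The only genuinely delicate step is in the first direction: one must recognize that Proposition \ref{p3.2} applies to a pair of Skorohod solutions whose singular inputs happen to be It\^o martingales, and then identify the angle-bracket increment $\langle M(\cdot,u)-N\rangle(t)-\langle M(\cdot,u)-N\rangle(s)$ with $\int_s^t|B(\tau,u)-D(\tau)|_Q^{\,2}d\tau$, which is a routine consequence of the definition of the $Q$-stochastic integral. The reverse direction is a pure uniqueness argument, made possible precisely because Definition \ref{d3.2} admits \emph{every} $GS$-process as a test — in particular, the solution $v$ one is trying to match.
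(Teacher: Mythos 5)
Your direction $(\ref{3.29})\Rightarrow(\ref{3.28})$ is correct and is exactly the paper's route: the paper disposes of it in one line (``via Proposition \ref{p3.2}''), and your expansion --- apply (\ref{3.16}) with $\alpha=0$ to the two $GS$-solutions, observe that the cross stochastic integral is an $\mathcal{M}^{1}$-martingale by (\ref{3.4}) so that expectation kills it, and identify $\langle M(\cdot;u)-N\rangle(t)-\langle M(\cdot;u)-N\rangle(s)$ with $\int_{s}^{t}\left\vert B(\tau,u)-D(\tau)\right\vert _{Q}^{2}d\tau$ --- is precisely the intended justification.

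The converse, however, has a genuine flaw as written. You take $v$ to be the fixed point furnished by Proposition \ref{p3.7} and assert ``so $v=GS(A;u_{0},f(\cdot,u),M(\cdot;u))$'', claiming that admissibility of the test triple $(g,D,z)=(f(\cdot,u),B(\cdot,u),v)$ is automatic ``by construction''. It is not: the solution of (\ref{3.29}) satisfies $v=GS(A;u_{0},f(\cdot,v),M(\cdot;v))$, with the coefficients evaluated along $v$ itself, and the identity you invoke is equivalent to $u=v$ --- exactly what you are trying to prove --- so the admissibility claim is circular. Two repairs are available. The paper's repair: define the comparison process directly by Theorem \ref{t3.4} as $\overline{u}=GS\left(A;u_{0},f(\cdot,u),M(\cdot;u)\right)$, which exists because the \emph{frozen} data $\left(u_{0},f(\cdot,u(\cdot)),M(\cdot;u)\right)$ satisfy (A$_{2}$); then the triple $\left(f(\cdot,u),B(\cdot,u),\overline{u}\right)$ is admissible by definition of $\overline{u}$, your computation goes through verbatim, $\mathbb{E}\left\vert u(t)-\overline{u}(t)\right\vert ^{2}\leq0$ gives $u=\overline{u}$, and this equality \emph{is} condition (\ref{3.29}-c), so no appeal to Proposition \ref{p3.7} is needed in this direction at all. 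Alternatively, keep Proposition \ref{p3.7}'s $v$ but test with the genuinely admissible triple $\left(f(\cdot,v),B(\cdot,v),v\right)$; the source terms in (\ref{3.28}-c) then do not vanish, and you must estimate them via the Lipschitz hypotheses $(\widetilde{H}_{2})$, $(\widetilde{H}_{3})$ and close with Gronwall's lemma. Either patch is short, but as submitted the key step of the converse does not stand.
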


\begin{proof}
Let $u$ be a stochastic process satisfying (\ref{3.28}). Then (under the
assumptions ($\widetilde{H}_{0},...,\widetilde{H}_{3}$), by Theorem \ref{t3.4}
there exists a unique generalized solution $\left(  \overline{u}%
,\overline{\eta}\right)  $ corresponding to $(u_{0}$,$f\left(  \cdot,u\left(
\cdot\right)  \right)  $,$M\left(  \cdot;u\right)  )$, $\overline{u}=GS\left(
A;u_{0},f\left(  \cdot,u\right)  ,M\left(  \cdot;u\right)  \right)  ,$ and
this solution satisfies (\ref{3.29}-a,b). If we put in (\ref{3.28}):
$s=0,\;z\left(  0\right)  =u_{0},\;g=f\left(  \cdot,u\left(  \cdot\right)
\right)  ,\;D=B\left(  \cdot,u\left(  \cdot\right)  \right)  ,\;z=\overline
{u}\;$we have $\mathbb{E}\left\vert u\left(  t\right)  -\overline{u}\left(
t\right)  \right\vert ^{2}\leq0,$ which yields, by the continuity of the
trajectories, that $u=\overline{u}$ in $L_{ad}^{2}\left(  \Omega;C\left(
\left[  0,T\right]  ;H\right)  \right)  .$ Hence $u$ is solution of
(\ref{3.29}).

Converse if $u$ is solution of (\ref{3.29}), then $u$ satisfies the conditions
$a)$ and $b)$ of Definition \ref{d3.2} and, also, the condition $c)$ via
Proposition \ref{p3.2}.\hfill
\end{proof}

Propositions \ref{p3.7} and \ref{p3.8} yield clearly:

\begin{theorem}
\label{t3.9}Under the assumptions ($\widetilde{H}_{0},...,\widetilde{H}_{3}$)
the equation (\ref{3.27}) has a unique solution in the sense of Definition
\ref{d3.2}. Moreover if $u_{01},u_{02}\in L_{ad}^{2p}\left(  \Omega
,\mathcal{F}_{0},\mathbb{P},;H\right)  $, $p\in\lbrack1,\infty),$ then
$u_{1}=u\left(  \cdot;u_{01}\right)  ,\;u_{2}=u\left(  \cdot;u_{02}\right)
\in L_{ad}^{2p}\left(  \Omega;C\left(  \left[  0,T\right]  ;H\right)  \right)
,$ and
\begin{equation}
\left\{
\begin{tabular}
[c]{l}%
$a)\;\;\mathbb{E}\sup\limits_{t\in\left[  0,T\right]  }\left\vert u\left(
t;u_{01}\right)  \right\vert ^{2p}\leq C_{1}(1+\mathbb{E}\left\vert
u_{01}\right\vert ^{2p}),$\medskip\\
$b)\;\;\mathbb{E}\sup\limits_{t\in\left[  0,T\right]  }\left\vert u\left(
t;u_{01}\right)  -u\left(  t;u_{02}\right)  \right\vert ^{2p}\leq
C_{2}\mathbb{E}\left\vert u_{01}-u_{02}\right\vert ^{2p},$%
\end{tabular}
\ \right.  \label{3.30}%
\end{equation}
where $C_{1}=C_{1}\left(  b,b_{1},p,T,x,y\right)  >0$ and $C_{2}=C_{2}\left(
L_{1},L,T,p\right)  >0$, $\left[  x,y\right]  \in A$ arbitrary fixed.
\end{theorem}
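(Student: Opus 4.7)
The existence and uniqueness of a solution in the sense of Definition~\ref{d3.2} are an immediate consequence of Propositions~\ref{p3.7} and~\ref{p3.8}: Proposition~\ref{p3.8} shows the two notions of solution coincide, while Proposition~\ref{p3.7} produces the unique solution of (\ref{3.29}) by Banach's fixed point theorem. So the remaining work is purely the moment estimates (\ref{3.30}).

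To establish the a priori moment bound (\ref{3.30}-a), I would fix any $[x,y]\in A$ and observe that $v(t)\equiv x$, with $\eta_{v}(t)=yt$, is the trivial generalized solution corresponding to data $(x,y,0)$. Since $A$ is maximal monotone ($\alpha=0$ in $(\widetilde{H}_{1})$), applying inequality (\ref{3.16}) of Proposition~\ref{p3.2} to the pair $(u_{1},v)$, and using that the quadratic variation of $M(\cdot;u_{1})=\int_{0}^{\cdot}B(s,u_{1})dW(s)$ equals $\int_{0}^{\cdot}|B(s,u_{1})|_{Q}^{2}ds$, produces a pathwise estimate of $|u_{1}(t)-x|^{2}$ in terms of $|u_{01}-x|^{2}$, a drift integral, a stochastic integral, and a quadratic-variation integral. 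Next, raise both sides to the $p$-th power, take $\sup_{t\in[0,T]}$ and expectation, and apply the Burkh\"older--Davis--Gundy inequality (\ref{3.7}-d) to the martingale term. The linear-growth hypotheses $(\widetilde{H}_{2})$-ii and $(\widetilde{H}_{3})$-ii replace the remaining pathwise quantities by a constant plus $C\int_{0}^{t}\mathbb{E}\sup_{s\le\tau}|u_{1}(s)|^{2p}d\tau$; a Young's inequality absorbs the $\sup$-factor produced by BDG back into the left-hand side, and Gronwall's lemma then yields (\ref{3.30}-a).

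For the Lipschitz dependence (\ref{3.30}-b), I would apply (\ref{3.16}) directly to $(u_{1},u_{2})$ with $g=f(\cdot,u_{2})$ and $N=M(\cdot;u_{2})$. The quadratic variation of $M(\cdot;u_{1})-M(\cdot;u_{2})$ equals $\int_{0}^{\cdot}|B(\tau,u_{1})-B(\tau,u_{2})|_{Q}^{2}d\tau$, and the Lipschitz hypotheses in $(\widetilde{H}_{2})$-ii and $(\widetilde{H}_{3})$-ii bound both this integrand and the drift cross term by $C|u_{1}-u_{2}|^{2}$. Raising to the $p$-th power, taking sup and expectation, invoking BDG, and running the same Young-absorption and Gronwall argument yields (\ref{3.30}-b).

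The delicate point in both arguments is the BDG step at exponent $2p>2$: when $p>1$, the martingale contribution is of the form $C_{p}\mathbb{E}(\int_{0}^{T}|u_{1}-u_{2}|^{2}\cdot|B(\tau,u_{1})-B(\tau,u_{2})|_{Q}^{2}d\tau)^{p/2}$, which must be split by factoring out $\sup_{s\le T}|u_{1}(s)-u_{2}(s)|^{p}$ and absorbed into the left-hand side via Young's inequality before Gronwall can be invoked; the analogous maneuver closes (\ref{3.30}-a) with $B(\tau,u_{2})$ replaced by $0$ and $u_{2}$ replaced by the constant $x$. Once this absorption succeeds, the rest is routine.
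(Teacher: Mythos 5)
Your proposal follows the paper's own route almost exactly: existence and uniqueness are delegated to Propositions \ref{p3.7} and \ref{p3.8}; the bound (\ref{3.30}-a) is obtained by comparing $u_{1}$ with the trivial generalized solution $v\equiv x$, $\eta_{v}(t)=yt$ (the paper writes this as $x=GS\left(x,y,\int_{0}^{\cdot}0\,dW\right)$) through the inequality of Proposition \ref{p3.2}; and (\ref{3.30}-b) comes from applying (\ref{3.16}) to the pair $(u_{1},u_{2})$ with the Lipschitz hypotheses. In both cases you close the $2p$-th moment estimate by Burkh\"older--Davis--Gundy, Young absorption of the factor $\sup_{s\leq T}|u_{1}-u_{2}|^{p}$, and Gronwall --- exactly the scheme in the text, and you correctly isolate the splitting of $C_{p}\,\mathbb{E}\bigl(\int_{0}^{T}|u_{1}-u_{2}|^{2}\,|B(\tau,u_{1})-B(\tau,u_{2})|_{Q}^{2}\,d\tau\bigr)^{p/2}$ as the delicate point.

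The one step you omit, and which is needed to make the absorption/Gronwall scheme legitimate, is localization by stopping times. A priori the solution is only known to lie in $L_{ad}^{2}\left(\Omega;C\left(\left[0,T\right];H\right)\right)$, so $\mathbb{E}\sup_{t}\left\vert u_{1}(t)\right\vert^{2p}$ is not known to be finite; subtracting $\frac{1}{2}\mathbb{E}\sup_{s}\left\vert u_{1}(s)\right\vert^{2p}$ from both sides (your Young absorption) and then invoking Gronwall's lemma are both invalid operations on a possibly infinite quantity, and without integrability the stochastic integral term need not even have the martingale properties you use. The paper handles this by introducing $\tau_{n}=\inf\{t\in\left[0,T\right]:\left\vert u(\omega,t)\right\vert\geq n\}$, substituting $t\wedge\tau_{n}$ into (\ref{3.31}) so that all estimates are run for the stopped (hence bounded, hence $2p$-integrable) process with constants independent of $n$, and only at the end passing to the limit $n\rightarrow\infty$. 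With this localization inserted in both of your arguments, your proof coincides with the paper's.
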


\begin{proof}
Now we have to do is proving (\ref{3.30}). By Propositions \ref{p3.7} and
\ref{p3.2} for all $\left[  x,y\right]  \in A$ we have:
\begin{equation}%
\begin{tabular}
[c]{l}%
$\displaystyle\left\vert u\left(  t\right)  -x\right\vert ^{2}\leq\left\vert
u_{0}-x\right\vert ^{2}+2\int_{0}^{t}\left(  u-x,f\left(  u\right)  -y\right)
ds$\medskip\\
$\displaystyle+\int_{0}^{t}\left\vert B\left(  u\right)  -0\right\vert
_{Q}^{2}ds+2\int_{0}^{t}\left(  u-x,B\left(  u\right)  -0\right)  dW\left(
s\right)  $%
\end{tabular}
\ \ \label{3.31}%
\end{equation}
$\forall\,t\in\left[  0,T\right]  ,$ $a.s.\;\omega\in\Omega,$ since
$x=GS\left(  x,y,\int_{0}^{\cdot}0dW\right)  .$

Let the stopping time:%
\[
\tau_{n}\left(  \omega\right)  =\left\{
\begin{tabular}
[c]{l}%
inf$\{t\in\left[  0,T\right]  :\left\vert u\left(  \omega,t\right)
\right\vert \geq n\},$\medskip\\
$T,\quad$if $\left\vert u\left(  \omega,t\right)  \right\vert <n,\;\forall
\,t\in\left[  0,T\right]  .$%
\end{tabular}
\ \ \ \right.
\]
We substitute in (\ref{3.31}) $t$ by $\tau_{n}\left(  \omega\right)  \wedge
t.$ Using Burkholder-Davis-Gundy inequality we have:%
\[%
\begin{array}
[c]{l}%
\displaystyle\mathbb{E}\sup_{s\in\left[  0,t\right]  }\left\vert \int%
_{0}^{s\wedge\tau_{n}}\left(  u-x,B\left(  u\right)  dW\right)  \right\vert
^{p}\leq\medskip\\
\displaystyle\leq9\left(  2p\right)  ^{p}\mathbb{E}\left(  \int_{0}%
^{t\wedge\tau_{n}}\left\vert u-x\right\vert ^{2}\left\vert B\left(  u\right)
\right\vert _{Q}^{2}ds\right)  ^{p/2}\medskip\\
\displaystyle\leq\frac{1}{2}\mathbb{E}\sup_{s\in\left[  0,t\right]
}\left\vert u\left(  s\wedge\tau_{n}\right)  \right\vert ^{2p}+C_{1}\left(
1+\mathbb{E}\int_{0}^{t\wedge\tau_{n}}\left\vert u\left(  s\right)
\right\vert ^{2p}ds\right)  ,
\end{array}
\]
where $C_{1}=C_{1}\left(  b,T,p,x\right)  >0$ is independent of $n$. Then from
(\ref{3.31}), after some elementary calculus, we obtain:%
\[
\mathbb{E}\sup_{s\in\left[  0,t\right]  }\left\vert u\left(  s\wedge\tau
_{n}\right)  \right\vert ^{2p}\leq C\left(  1+\mathbb{E}\left\vert
u_{0}\right\vert ^{2p}+\mathbb{E}\int_{0}^{t\wedge\tau_{n}}\left\vert u\left(
s\right)  \right\vert ^{2p}ds\right)
\]
which yields:%
\[
\mathbb{E}\sup_{s\in\left[  0,t\right]  }\left\vert u\left(  s\wedge\tau
_{n}\right)  \right\vert ^{2p}\leq C\left(  1+\mathbb{E}\left\vert
u_{0}\right\vert ^{2p}\right)
\]
and passing to limit as $n\rightarrow\infty,$ (\ref{3.30}-a) follows. The
inequality (\ref{3.30}-b) is obtained in the same manner. From Propositions
\ref{p3.7} and \ref{p3.2} we have%
\begin{align*}
\left\vert u_{1}\left(  t\right)  -u_{2}\left(  t\right)  \right\vert ^{2}  &
\leq\left\vert u_{01}-u_{02}\right\vert ^{2}+2\int_{0}^{t}\left(  u_{1}%
-u_{2},f\left(  u_{1}\right)  -f\left(  u_{2}\right)  \right)  ds\\
&  +\int_{0}^{t}\left\vert B\left(  u_{1}\right)  -B\left(  u_{2}\right)
\right\vert _{Q}^{2}ds+2\int_{0}^{t}\left(  u_{1}-u_{2},B\left(  u_{1}\right)
-B\left(  u_{2}\right)  \right)  dW\left(  s\right)
\end{align*}
As above using Burkholder-Davis-Gundy inequality and Gronwall inequality it is
clear that this last inequality yields (\ref{3.30}-b).\hfill
\end{proof}

\begin{corollary}
\label{c3.10}Let the assumptions ($\widetilde{H}_{0},...,\widetilde{H}_{3}$)
be satisfied, $p\in\lbrack1,\infty)$ and $u_{0}\in L^{p}(\Omega,F_{0}%
,\mathbb{P};H)$. If $\left(  u_{\varepsilon},\eta_{\varepsilon}\right)  $ is
the solution of the approximating problem:%
\begin{equation}%
\begin{array}
[c]{l}%
du_{\varepsilon}\left(  t\right)  +A_{\varepsilon}\left(  u_{\varepsilon
}\left(  t\right)  \right)  dt=f\left(  t,u_{\varepsilon}\left(  t\right)
\right)  dt+B\left(  t,u_{\varepsilon}\left(  t\right)  \right)  dW\left(
t\right)  \medskip\\
u_{\varepsilon}\left(  0\right)  =u_{0}\medskip\\
\displaystyle\eta_{\varepsilon}\left(  t\right)  =\int_{0}^{t}A_{\varepsilon
}\left(  u_{\varepsilon}\left(  s\right)  \right)  ds,
\end{array}
\label{3.32}%
\end{equation}
$\varepsilon\in(0,1]$,\ where $A_{\varepsilon}$\ is the Yosida approximation
of the maximal monotone operator $A$. Then there exists a positive constant
$C=C(T$,$r_{0}$,$h_{0}$,$a_{1}$,$a_{2}$,$b_{1}$,$b)$ such that:%
\begin{equation}
\mathbb{E}\sup_{t\in\left[  0,T\right]  }\left\vert u_{\varepsilon}\left(
t\right)  \right\vert ^{2p}+\mathbb{E}\left\Vert \eta_{\varepsilon}\right\Vert
_{BV\left(  \left[  0,T\right]  ;X^{\ast}\right)  }^{p}\leq C(1+\mathbb{E}%
\left\vert u_{0}\right\vert ^{2p}) \label{3.33}%
\end{equation}
and%
\begin{equation}
\lim_{\varepsilon\searrow0}u_{\varepsilon}=u,\quad\lim_{\varepsilon\searrow
0}\eta_{\varepsilon}=\eta\quad\quad\text{in }L_{ad}^{2}\left(  \Omega;C\left(
\left[  0,T\right]  ;H\right)  \right)  \label{3.34}%
\end{equation}

\end{corollary}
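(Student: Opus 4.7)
The strategy parallels Corollaries 2.4 and 3.6, split into three steps: existence of $u_\varepsilon$, the a priori estimate (3.33), and the convergence (3.34).

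\textbf{Step 1: Existence.} Since $A_\varepsilon$ is single-valued, monotone and Lipschitz with constant $1/\varepsilon$, and $f(t,\cdot)$, $B(t,\cdot)$ are globally Lipschitz with at-most-linear growth (assumptions $\widetilde{H}_2$, $\widetilde{H}_3$), the hypotheses of Pardoux's Theorem \ref{t3.2} are satisfied with $V=H$, $p=2$. This yields a unique $u_\varepsilon \in L^2_{ad}(\Omega;C([0,T];H))$, and $\eta_\varepsilon(t)=\int_0^tA_\varepsilon(u_\varepsilon(s))ds$ lies in $L^2_{ad}(\Omega;C([0,T];H))$ automatically.

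\textbf{Step 2: The a priori bound (3.33).} As observed in the proof of Corollary \ref{c2.4}, since $A_\varepsilon x \in A(J_\varepsilon x)$ and $J_\varepsilon x = x - \varepsilon A_\varepsilon x$, assumption $\widetilde{H}_1$-ii combined with (2.1-e) yields constants $b_1,b_2>0$ (depending only on $h_0,a_1,a_2$) such that $r_0\|A_\varepsilon x\|_{X^*} \leq (A_\varepsilon x, x-h_0)+b_1|x|^2+b_2$ for every $x\in H$. Applying It\^o's formula to $|u_\varepsilon(t)-h_0|^2$ gives
\[
|u_\varepsilon(t)-h_0|^2+2\!\int_0^t(A_\varepsilon(u_\varepsilon),u_\varepsilon-h_0)ds = |u_0-h_0|^2+2\!\int_0^t(f(s,u_\varepsilon),u_\varepsilon-h_0)ds+2\!\int_0^t(u_\varepsilon-h_0,B(s,u_\varepsilon)dW)+\!\int_0^t|B(s,u_\varepsilon)|_Q^2 ds.
\]
Using the structural inequality above on the LHS (which produces $r_0\|\eta_\varepsilon\|_{BV([0,t];X^*)}$ minus a term $\lesssim\int_0^t|u_\varepsilon|^2ds$), the linear-growth bounds for $f$ and $B$, raising to the $p$-th power, taking $\sup_{t\leq T}$, applying Burkholder-Davis-Gundy (3.7-d) to absorb the stochastic integral into $\tfrac12\mathbb{E}\sup|u_\varepsilon|^{2p}$, and closing by Gronwall, one obtains (3.33).

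\textbf{Step 3: Convergence (3.34).} I freeze the nonlinearity at the limit solution, mirroring Corollary \ref{c3.6}. Let $u$ be the solution of (\ref{3.27}) given by Theorem \ref{t3.9}, set $g(t):=f(t,u(t))$, $D(t):=B(t,u(t))$, $M(t):=\int_0^t D(s)dW(s)$, and let $v_\varepsilon$ solve the \emph{additive-noise} penalized problem
\[
dv_\varepsilon+A_\varepsilon v_\varepsilon\,dt = g(t)dt+D(t)dW(t),\qquad v_\varepsilon(0)=u_0.
\]
Since $(u_0,g,M)$ satisfies $(A_2)$, Corollary \ref{c3.6} applies and gives $v_\varepsilon\to u$ in $L^2_{ad}(\Omega;C([0,T];H))$. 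To finish, I compare $u_\varepsilon$ with $v_\varepsilon$. It\^o's formula applied to $|u_\varepsilon-v_\varepsilon|^2$, together with monotonicity of $A_\varepsilon$ (which makes $-(A_\varepsilon(u_\varepsilon)-A_\varepsilon(v_\varepsilon),u_\varepsilon-v_\varepsilon)\leq 0$ and removes the singular $1/\varepsilon$ term) and the Lipschitz bounds for $f,B$, yields
\[
|u_\varepsilon(t)-v_\varepsilon(t)|^2 \leq 2\!\int_0^t(u_\varepsilon-v_\varepsilon,f(s,u_\varepsilon)-f(s,u))ds+\!\int_0^t|B(s,u_\varepsilon)-B(s,u)|_Q^2ds+2\!\int_0^t(u_\varepsilon-v_\varepsilon,(B(s,u_\varepsilon)-B(s,u))dW).
\]
Splitting $u_\varepsilon-u=(u_\varepsilon-v_\varepsilon)+(v_\varepsilon-u)$, applying Burkholder-Davis-Gundy, and invoking Gronwall, I obtain $\mathbb{E}\sup_{t\leq T}|u_\varepsilon-v_\varepsilon|^2 \leq C\,\mathbb{E}\sup_{t\leq T}|v_\varepsilon-u|^2 \to 0$. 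The triangle inequality then gives $u_\varepsilon\to u$, and $\eta_\varepsilon\to\eta$ follows from $\eta_\varepsilon-\eta=-(u_\varepsilon-u)+\int_0^\cdot(f(s,u_\varepsilon)-f(s,u))ds+\int_0^\cdot(B(s,u_\varepsilon)-B(s,u))dW$ together with the Lipschitz properties and BDG.

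\textbf{Main obstacle.} The principal difficulty lies in Step 3: the penalized equation has state-dependent coefficients evaluated at $u_\varepsilon$ rather than at the limit $u$, so a naive comparison inherits the singular term $A_\varepsilon(u_\varepsilon)$. The freezing device (introducing $v_\varepsilon$ with coefficients evaluated at $u$) is what allows monotonicity of $A_\varepsilon$ to cancel the unbounded contribution, reducing the problem to the additive-noise Corollary \ref{c3.6} plus a Lipschitz perturbation controlled by Gronwall.
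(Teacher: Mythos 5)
Your proposal follows essentially the same route as the paper's proof: existence via Pardoux's theorem, the a priori bound (3.33) from the energy equality for $u_{\varepsilon}-h_{0}$ combined with the inequality $r_{0}\left\Vert A_{\varepsilon}x\right\Vert _{X^{\ast}}\leq\left(  A_{\varepsilon}x,x-h_{0}\right)  +b_{1}\left\vert x\right\vert ^{2}+b_{2}$ and Burkholder--Davis--Gundy, and the convergence (3.34) by exactly the paper's freezing device --- your $v_{\varepsilon}$ is the paper's $\widetilde{u}_{\varepsilon}$, whose convergence to $u=GS\left(  A;u_{0},f\left(  \cdot,u\right)  ,M\left(  \cdot;u\right)  \right)  $ rests on Proposition \ref{p3.8}, followed by the same monotonicity--Lipschitz--Gronwall comparison of $u_{\varepsilon}$ with $v_{\varepsilon}$. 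Your explicit citation of Corollary \ref{c3.6} for $v_{\varepsilon}\rightarrow u$ and the closing formula for $\eta_{\varepsilon}-\eta$ merely make explicit what the paper leaves implicit, so the two arguments coincide.
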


\begin{proof}
We know, by Pardoux result (Theorem \ref{t3.2}) or directly by a fixed point
argument, that,under the assumptions $\left(  \widetilde{H}_{0}%
,...,\widetilde{H}_{3}\right)  ,$ the equation (\ref{3.32}) has a unique
solution $u_{\varepsilon}\in L_{ad}^{2}\left(  \Omega;C\left(  \left[
0,T\right]  ;H\right)  \right)  .$ By Energy Equality:%
\begin{equation}%
\begin{array}
[c]{l}%
\left\vert u_{\varepsilon}\left(  t\right)  -h_{0}\right\vert ^{2}+2\int%
_{0}^{t}\left(  A_{\varepsilon}\left(  u_{\varepsilon}\right)  ,u_{\varepsilon
}-h_{0}\right)  ds\medskip\\
=\left\vert u_{0}-h_{0}\right\vert ^{2}+2\int_{0}^{t}\left(  u_{\varepsilon
}-h_{0},f\left(  u_{\varepsilon}\right)  \right)  ds\medskip\\
\quad\displaystyle+2\int_{0}^{t}\left(  u_{\varepsilon}-h_{0},B\left(
u_{\varepsilon}\right)  \right)  dW\left(  s\right)  +\int_{0}^{t}\left\vert
B\left(  u_{\varepsilon}\right)  \right\vert _{Q}^{2}ds.
\end{array}
\label{3.35}%
\end{equation}
But by $\left(  \widetilde{H}_{1}-ii\right)  $ and (\ref{2.1}-a,e) we have:%
\begin{align*}
r_{0}\left\Vert A_{\varepsilon}x\right\Vert _{X^{\ast}}  &  \leq\left(
A_{\varepsilon}x,J_{\varepsilon}x-h_{0}\right)  +a_{1}\left\vert
J_{\varepsilon}x\right\vert ^{2}+a_{2}\\
&  \leq\left(  A_{\varepsilon}x,x-h_{0}\right)  +2a_{1}\left\vert x\right\vert
^{2}+\left(  8a_{1}\left\vert J_{1}0\right\vert ^{2}+a_{2}\right)  ,
\end{align*}
for all $x\in H,\;\varepsilon\in(0,1]$, and by (\ref{3.7}-d) $\forall
\,\delta>0:$%
\[%
\begin{array}
[c]{l}%
\displaystyle\mathbb{E}\sup_{s\in\left[  0,t\right]  }\left\vert \int_{0}%
^{s}\left(  u_{\varepsilon}-h_{0},B\left(  u_{\varepsilon}\right)  \right)
dW\left(  \tau\right)  \right\vert ^{p}\leq\medskip\\
\displaystyle\leq9\left(  2p\right)  ^{p}\mathbb{E}\left(  \int_{0}%
^{t}\left\vert u_{\varepsilon}-h_{0}\right\vert ^{2}\left\vert B\left(
u_{\varepsilon}\right)  \right\vert _{Q}^{2}ds\right)  ^{p/2}\medskip\\
\displaystyle\leq\delta\,\mathbb{E}\sup_{s\in\left[  0,t\right]  }\left\vert
u\left(  s\right)  \right\vert ^{2p}+\frac{C\left(  p,T,h_{0},b\right)
}{\delta}\,\mathbb{E}\int_{0}^{t}\left\vert u_{\varepsilon}\left(  s\right)
\right\vert ^{2p}ds.
\end{array}
\]
These estimates used in (\ref{3.35}) produce clearly, by a standard calculus,
(\ref{3.33}).

From Proposition \ref{p3.8} we know that $u=GS(A;u_{0},f\left(  \cdot
,u\right)  ,M\left(  \cdot;u\right)  )$. Hence%
\[
u=\lim_{\varepsilon\searrow0}\widetilde{u}_{\varepsilon}\;\text{in}%
\;L_{ad}^{2}\left(  \Omega;C\left(  \left[  0,T\right]  ;H\right)  \right)  ,
\]
where $\widetilde{u}_{\varepsilon}$ is the solution of the equation%
\[%
\begin{array}
[c]{l}%
d\widetilde{u}_{\varepsilon}\left(  t\right)  +A_{\varepsilon}\left(
\widetilde{u}_{\varepsilon}\left(  t\right)  \right)  dt=f\left(  t,u\left(
t\right)  \right)  dt+B\left(  t,u\left(  t\right)  \right)  dW\left(
t\right)  \medskip\\
u_{\varepsilon}\left(  0\right)  =u_{0},
\end{array}
\]
$\varepsilon\in(0,1].$

We write the Energy Equality for $u_{\varepsilon}-\widetilde{u}_{\varepsilon
}:$%
\[%
\begin{array}
[c]{l}%
\displaystyle|u_{\varepsilon}\left(  t\right)  -\widetilde{u}_{\varepsilon
}\left(  t\right)  |^{2}+2\int_{0}^{t}\left(  A_{\varepsilon}\left(
u_{\varepsilon}\right)  -A_{\varepsilon}\left(  \widetilde{u}_{\varepsilon
}\right)  ,u_{\varepsilon}-\widetilde{u}_{\varepsilon}\right)  ds\medskip\\
\displaystyle=2\int_{0}^{t}\left(  u_{\varepsilon}-\widetilde{u}_{\varepsilon
},f\left(  u_{\varepsilon}\right)  -f\left(  u\right)  \right)  ds+2\int%
_{0}^{t}\left(  u_{\varepsilon}-\widetilde{u}_{\varepsilon},B\left(
u_{\varepsilon}\right)  -B\left(  u\right)  \right)  dW\left(  s\right)
\medskip\\
\displaystyle\quad+\int_{0}^{t}\left\vert B\left(  u_{\varepsilon}\right)
-B\left(  u\right)  \right\vert _{Q}^{2}ds.
\end{array}
\]
By $\left(  \widetilde{H}_{2}\right)  $ and $\left(  \widetilde{H}_{3}\right)
$we have%
\begin{align*}
2\left(  u_{\varepsilon}-\widetilde{u}_{\varepsilon},f\left(  u_{\varepsilon
}\right)  -f\left(  u\right)  \right)   &  \leq3L_{1}\left\vert u_{\varepsilon
}-\widetilde{u}_{\varepsilon}\right\vert ^{2}+L_{1}\left\vert \widetilde{u}%
_{\varepsilon}-u\right\vert ^{2},\\
\left\vert B\left(  u_{\varepsilon}\right)  -B\left(  u\right)  \right\vert
_{Q}^{2}  &  \leq2L\left\vert u_{\varepsilon}-\widetilde{u}_{\varepsilon
}\right\vert ^{2}+2L\left\vert \widetilde{u}_{\varepsilon}-u\right\vert ^{2}.
\end{align*}
Hence%
\[%
\begin{array}
[c]{l}%
\mathbb{E}\sup_{s\in\left[  0,t\right]  }\left\vert u_{\varepsilon}\left(
s\right)  -\widetilde{u}_{\varepsilon}\left(  s\right)  \right\vert ^{2}%
\leq(3L_{1}+2L)\mathbb{E}\int_{0}^{T}\left\vert \widetilde{u}_{\varepsilon
}\left(  s\right)  -u\left(  s\right)  \right\vert ^{2}ds\medskip\\
\;\;\;\;\;\;\;\;\;\;\;\;\;+(3L_{1}+2L)\int_{0}^{t}\mathbb{E}\sup_{s\in\left[
0,\tau\right]  }\left\vert u_{\varepsilon}\left(  s\right)  -\widetilde{u}%
_{\varepsilon}\left(  s\right)  \right\vert ^{2}d\tau\medskip\\
\;\;\;\;\;\;\;\;\;\;\;\;\;+6\mathbb{E}\left(  \int_{0}^{t}\left\vert
u_{\varepsilon}-\widetilde{u}_{\varepsilon}\right\vert ^{2}\left\vert B\left(
u_{\varepsilon}\right)  -B\left(  u\right)  \right\vert _{Q}^{2}ds\right)
^{1/2}%
\end{array}
\]
which easily yields%
\[
\mathbb{E}\sup_{s\in\left[  0,T\right]  }\left\vert u_{\varepsilon}\left(
s\right)  -\widetilde{u}_{\varepsilon}\left(  s\right)  \right\vert ^{2}\leq
C\,\mathbb{E}\sup_{s\in\left[  0,T\right]  }\left\vert \widetilde{u}%
_{\varepsilon}\left(  s\right)  -u\left(  s\right)  \right\vert ^{2},
\]
where $C=C\left(  T,L,L_{1}\right)  >0$ and (\ref{3.34}) follows.\hfill
\end{proof}

\section{Large time behaviour}

\subsection{Exponentially stability}

In this section we shall study some asymptotic properties of the generalized
(stochastic) solution of the equation (\ref{3.27}). We shall consider the same
context of the spaces, $X\subset H\subset X^{\ast},$ as in Section 3.3 and,
the assumptions, $\left(  \widetilde{H}_{0},...,\widetilde{H}_{3}\right)  $ be
satisfied. Assume also that%
\begin{equation}%
\begin{tabular}
[c]{l}%
$\exists\,a\geq0$ \ such that $\forall\,\left[  x_{1},y_{1}\right]  ,\,\left[
x_{2},y_{2}\right]  \in A\medskip$\\
$\left(  y_{1}-y_{2},x_{1}-x_{2}\right)  \geq a\left\vert x_{1}-x_{2}%
\right\vert ^{2}$%
\end{tabular}
\ \label{4.1}%
\end{equation}

\begin{lemma}
\label{l4.1} If $\left(  \widetilde{H}_{1}-i\right)  $ and (\ref{4.1}) are
satisfied then $\forall\theta\in\left[  0,1\right)  $, $\forall\varepsilon>0$,
such that $\varepsilon a\theta\leq1-\theta$, the inequality $\left(
A_{\varepsilon}u-A_{\varepsilon}v,u-v\right)  \geq a\theta\left\vert
u-v\right\vert ^{2}\;$\ holds $\forall\,u,v\in H.$
\end{lemma}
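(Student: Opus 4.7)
The plan is to exploit the resolvent identity together with the $a$-monotonicity hypothesis~(\ref{4.1}), reducing the claim to the positivity of a simple $2{\times}2$ quadratic form whose nonnegativity is governed precisely by the condition $\varepsilon a\theta\le 1-\theta$.

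First I would recall the decomposition $u-v=(J_\varepsilon u-J_\varepsilon v)+\varepsilon(A_\varepsilon u-A_\varepsilon v)$, valid because $A_\varepsilon=\frac{1}{\varepsilon}(I-J_\varepsilon)$. Since $[J_\varepsilon u,A_\varepsilon u]$ and $[J_\varepsilon v,A_\varepsilon v]$ both belong to $A$ by (\ref{2.1}-a), the assumption (\ref{4.1}) yields
\[
(A_\varepsilon u-A_\varepsilon v,\,J_\varepsilon u-J_\varepsilon v)\ge a\,|J_\varepsilon u-J_\varepsilon v|^{2}.
\]
Combining this with the above decomposition gives
\[
(A_\varepsilon u-A_\varepsilon v,\,u-v)
\ge a\,|J_\varepsilon u-J_\varepsilon v|^{2}+\varepsilon\,|A_\varepsilon u-A_\varepsilon v|^{2}.
\]

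Next, setting $p:=|J_\varepsilon u-J_\varepsilon v|$ and $q:=|A_\varepsilon u-A_\varepsilon v|$, the triangle inequality gives $|u-v|\le p+\varepsilon q$, and the target inequality reduces to the elementary scalar claim
\[
a p^{2}+\varepsilon q^{2}\ \ge\ a\theta\,(p+\varepsilon q)^{2}.
\]
Expanding and regrouping, this is equivalent to the quadratic form
\[
a(1-\theta)\,p^{2}-2a\theta\varepsilon\,pq+(\varepsilon-a\theta\varepsilon^{2})\,q^{2}\ \ge\ 0
\]
being nonnegative. For $a=0$ this is trivial; for $a>0$ the diagonal coefficients are nonnegative exactly when $\varepsilon a\theta\le1$, and the discriminant condition $(a\theta\varepsilon)^{2}\le a(1-\theta)(\varepsilon-a\theta\varepsilon^{2})$ simplifies, after dividing by $a\varepsilon$, to $a\theta\varepsilon\bigl(\theta+(1-\theta)\bigr)\le 1-\theta$, i.e.\ $\varepsilon a\theta\le 1-\theta$, which is precisely the standing hypothesis. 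Hence the quadratic form is nonnegative and the inequality follows.

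There is no real obstacle in this argument; the only point to be careful about is packaging the elementary inequality as a discriminant condition and verifying that it matches the sharp bound $\varepsilon a\theta\le 1-\theta$ stated in the lemma, rather than a weaker sufficient condition. The case $a=0$ (where the conclusion reduces to the standard monotonicity of $A_\varepsilon$) and the boundary cases $\theta=0$ or $\varepsilon a\theta=1-\theta$ should be checked to work without modification, since the quadratic form degenerates gracefully.
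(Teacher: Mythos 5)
Your proof is correct and follows essentially the same route as the paper: the same resolvent decomposition $u-v=(J_{\varepsilon}u-J_{\varepsilon}v)+\varepsilon\left(  A_{\varepsilon}u-A_{\varepsilon}v\right)$, with (\ref{4.1}) applied to the pairs $\left[  J_{\varepsilon}u,A_{\varepsilon}u\right],\left[  J_{\varepsilon}v,A_{\varepsilon}v\right]  \in A$, giving the lower bound $a\left\vert J_{\varepsilon}u-J_{\varepsilon}v\right\vert ^{2}+\varepsilon\left\vert A_{\varepsilon}u-A_{\varepsilon}v\right\vert ^{2}$. The only difference is the final elementary step: the paper keeps the vector identity $J_{\varepsilon}u-J_{\varepsilon}v=(u-v)-\varepsilon\left(  A_{\varepsilon}u-A_{\varepsilon}v\right)$ and applies the Hilbert-space inequality $\left\vert x-y\right\vert ^{2}\geq\theta\left\vert x\right\vert ^{2}-\frac{\theta}{1-\theta}\left\vert y\right\vert ^{2}$, while you pass to scalars via the triangle inequality and verify nonnegativity of a $2\times2$ quadratic form through its discriminant --- an equivalent computation that, as you correctly check (including the cases $a=0$, $\theta=0$ and the boundary case), recovers exactly the sharp threshold $\varepsilon a\theta\leq1-\theta$.
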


\begin{proof}
We have%
\begin{align*}
\left(  A_{\varepsilon}u-A_{\varepsilon}v,u-v\right)   &  =\left(
A_{\varepsilon}u-A_{\varepsilon}v,J_{\varepsilon}u+\varepsilon A_{\varepsilon
}u-J_{\varepsilon}v-\lambda A_{\varepsilon}v\right) \\
&  \geq a\left\vert J_{\varepsilon}u-J_{\varepsilon}v\right\vert ^{2}%
+\lambda\left\vert A_{\varepsilon}u-A_{\varepsilon}v\right\vert ^{2}\\
&  =a\left\vert u-v-\varepsilon\left(  A_{\varepsilon}u-A_{\varepsilon
}v\right)  \right\vert ^{2}++\varepsilon\left\vert A_{\varepsilon
}u-A_{\varepsilon}v\right\vert ^{2}.
\end{align*}
But \ $\left\vert u-v\right\vert ^{2}\geq\theta\left\vert u\right\vert
^{2}-\frac{\theta}{1-\theta}\left\vert v\right\vert ^{2},\;\;\forall\,u,v\in
H.$ Hence
\begin{align*}
\left(  A_{\varepsilon}u-A_{\varepsilon}v,u-v\right)   &  \geq a\theta
\left\vert u-v\right\vert ^{2}+\varepsilon(1-\frac{a\theta\varepsilon
}{1-\theta})\left\vert A_{\varepsilon}u-A_{\varepsilon}v\right\vert ^{2}\\
&  \geq a\theta\left\vert u-v\right\vert ^{2}.
\end{align*}
\hfill
\end{proof}

\begin{proposition}
\label{p4.2}Let ($\widetilde{H}_{1},\widetilde{H}_{2},\widetilde{H}_{3}$) and
(\ref{4.1}) be satisfied and $u,v\;$be two generalized solutions of equation
(\ref{3.27}) corresponding to the initial dates $u_{0},v_{0}$, respectively
($u_{0},v_{0}\in L^{2}(\Omega,\mathcal{F}_{0},\mathbb{P};\overline{D\left(
A\right)  })$). Let $\theta\in\left[  0,1\right)  $ and $\beta=2a\theta
-2L-L_{1}$. Then%
\begin{equation}%
\begin{tabular}
[c]{l}%
$\mathbb{E}\left(  \left\vert u\left(  t\right)  -v\left(  t\right)
\right\vert ^{2}\left\vert \mathcal{F}_{s}\right.  \right)  \leq
e^{-\beta\left(  t-s\right)  }\left\vert u\left(  s\right)  -v\left(
s\right)  \right\vert ^{2}$\medskip\\
\quad\quad\quad\quad\ for all\ $0\leq s\leq t\leq T$, a.s. $\omega\in\Omega$.
\end{tabular}
\ \label{4.2}%
\end{equation}

\end{proposition}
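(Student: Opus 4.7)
The strategy is to prove the inequality first for the Yosida-regularized solutions $u_\varepsilon,v_\varepsilon$ of the penalized problem (\ref{3.32}), where we have smooth It\^o processes to which It\^o's formula applies, and then pass to the limit $\varepsilon\searrow 0$ using Corollary \ref{c3.10}.

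First I would fix $\theta\in[0,1)$ and $\varepsilon>0$ small enough that $\varepsilon a\theta\le 1-\theta$, so that Lemma \ref{l4.1} applies. Let $u_\varepsilon,v_\varepsilon$ be the solutions of (\ref{3.32}) with initial data $u_0,v_0$ respectively, and set $w_\varepsilon:=u_\varepsilon-v_\varepsilon$. Subtracting the two copies of (\ref{3.32}), $w_\varepsilon$ solves an honest It\^o SDE with drift $-(A_\varepsilon u_\varepsilon-A_\varepsilon v_\varepsilon)+f(t,u_\varepsilon)-f(t,v_\varepsilon)$ and diffusion $B(t,u_\varepsilon)-B(t,v_\varepsilon)$. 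Applying It\^o to $|w_\varepsilon|^2$ and using Lemma \ref{l4.1} for the operator part, the Lipschitz bound from $(\widetilde{H}_2)$ for the drift, and the Lipschitz bound from $(\widetilde{H}_3)$ for the diffusion's quadratic variation, I obtain an inequality of the form
\[
d|w_\varepsilon(t)|^2 \;\le\; -\beta\,|w_\varepsilon(t)|^2\,dt \;+\; dN_\varepsilon(t),
\]
where $N_\varepsilon(t)=2\int_0^t\bigl(w_\varepsilon(\tau),(B(\tau,u_\varepsilon)-B(\tau,v_\varepsilon))dW(\tau)\bigr)$ is a continuous martingale (integrability follows from the moment bound (\ref{3.33}), via localization by the stopping times $\tau_n=\inf\{t:|u_\varepsilon(t)|+|v_\varepsilon(t)|\ge n\}$ if needed).

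Next I would introduce the integrating factor $e^{\beta t}$. It\^o's product rule gives $d\bigl(e^{\beta t}|w_\varepsilon(t)|^2\bigr)\le e^{\beta t}\,dN_\varepsilon(t)$, so $e^{\beta t}|w_\varepsilon(t)|^2 - e^{\beta s}|w_\varepsilon(s)|^2$ is dominated by a martingale increment. Taking $\mathbb{E}(\cdot\,|\,\mathcal{F}_s)$ and using that the martingale has zero conditional expectation (after localization and monotone/dominated convergence to remove the stopping times),
\[
\mathbb{E}\bigl(e^{\beta t}|w_\varepsilon(t)|^2\,\big|\,\mathcal{F}_s\bigr) \;\le\; e^{\beta s}|w_\varepsilon(s)|^2 \quad\text{a.s.},
\]
which is the desired inequality for $w_\varepsilon$.

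Finally I would pass to the limit $\varepsilon\searrow 0$. By Corollary \ref{c3.10} we have $u_\varepsilon\to u$ and $v_\varepsilon\to v$ in $L^2_{ad}(\Omega;C([0,T];H))$, hence $|w_\varepsilon(t)|^2\to|u(t)-v(t)|^2$ in $L^1(\Omega)$ uniformly in $t\in[0,T]$. Since conditional expectation is a contraction on $L^1(\Omega)$, this convergence passes through $\mathbb{E}(\cdot\,|\,\mathcal{F}_s)$ and yields (\ref{4.2}) almost surely. The main technical point to be careful about is the martingale/localization step when taking $\mathbb{E}(\cdot\,|\,\mathcal{F}_s)$: one has to ensure that the stochastic integral $N_\varepsilon$ has vanishing conditional expectation in the limit, which is justified by the uniform moment bound (\ref{3.33}) combined with the Burkholder--Davis--Gundy estimate (\ref{3.7}-d); everything else is a routine application of It\^o's formula to the already-smooth approximating equation.
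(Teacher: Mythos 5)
Your proposal is correct and takes essentially the same route as the paper: Yosida approximation via Corollary \ref{c3.10}, Lemma \ref{l4.1} together with $(\widetilde{H}_{2})$, $(\widetilde{H}_{3})$ in the It\^o/energy identity for $\left\vert u_{\varepsilon}-v_{\varepsilon}\right\vert ^{2}$, and conditioning on $\mathcal{F}_{s}$ to annihilate the stochastic integral. The only (harmless) difference is the ordering: you condition and apply the integrating factor $e^{\beta t}$ at the $\varepsilon$-level and then let $\varepsilon\searrow0$ using the $L^{1}$-contractivity of conditional expectation, whereas the paper first passes to the limit in the pathwise integral inequality (along a subsequence converging a.s. in $C\left(  \left[  0,T\right]  ;H\right)  $) and then extracts the exponential decay by a Gronwall-type argument for $\varphi\left(  \tau\right)  =\mathbb{E}\left(  \left\vert u\left(  \tau\right)  -v\left(  \tau\right)  \right\vert ^{2}\left\vert \mathcal{F}_{s_{0}}\right.  \right)  $; both orderings are justified by the convergence in $L_{ad}^{2}\left(  \Omega;C\left(  \left[  0,T\right]  ;H\right)  \right)  $.
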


\begin{proof}
By Corollary \ref{c3.10} we have: \ $u=\lim\limits_{\varepsilon\rightarrow
0}u_{\varepsilon},\;v=\lim\limits_{\varepsilon\rightarrow0}v_{\varepsilon}$ in
$L_{ad}^{2}\left(  \Omega;C\left(  \left[  0,T\right]  ;H\right)  \right)
$\ where $u_{\varepsilon}$ and $v_{\varepsilon}$ are the solutions of the
approximating equations:%
\[
\left\{
\begin{tabular}
[c]{l}%
$du_{\varepsilon}+A_{\varepsilon}u_{\varepsilon}dt=f\left(  t,u_{\varepsilon
}\left(  t\right)  \right)  dt+B\left(  t,u_{\varepsilon}\left(  t\right)
\right)  dW\left(  t\right)  $\medskip\\
$u_{\varepsilon}\left(  0\right)  =u_{0}$%
\end{tabular}
\ \ \ \right.
\]
and%
\[
\left\{
\begin{tabular}
[c]{l}%
$dv_{\varepsilon}\left(  t\right)  +A_{\varepsilon}v_{\varepsilon}dt=f\left(
t,v_{\varepsilon}\left(  t\right)  \right)  dt+B\left(  t,v_{\varepsilon
}\left(  t\right)  \right)  dW\left(  t\right)  $\medskip\\
$v_{\varepsilon}\left(  0\right)  =v_{0}$%
\end{tabular}
\ \ \ \right.
\]
respectively.

We write the Energy Equality for $\left\vert u_{\varepsilon}-v_{\varepsilon
}\right\vert ^{2}:$%
\[%
\begin{tabular}
[c]{l}%
$\displaystyle\left\vert u_{\varepsilon}\left(  t\right)  -v_{\varepsilon
}\left(  t\right)  \right\vert ^{2}+2\int\nolimits_{s}^{t}\left(
A_{\varepsilon}u_{\varepsilon}-A_{\varepsilon}v_{\varepsilon},u_{\varepsilon
}-v_{\varepsilon}\right)  d\tau$\medskip\\
$\displaystyle=\left\vert u_{\varepsilon}\left(  s\right)  -v_{\varepsilon
}\left(  s\right)  \right\vert ^{2}+2\int\nolimits_{s}^{t}\left(
u_{\varepsilon}-v_{\varepsilon},f\left(  u_{\varepsilon}\right)  -f\left(
v_{\varepsilon}\right)  \right)  ds+$\medskip\\
$\quad\displaystyle+2\int\nolimits_{s}^{t}\left(  u_{\varepsilon
}-v_{\varepsilon},\left(  B\left(  u_{\varepsilon}\right)  -B\left(
v_{\varepsilon}\right)  \right)  dW\right)  +\int\nolimits_{s}^{t}\left\vert
B\left(  u_{\varepsilon}\right)  -B\left(  v_{\varepsilon}\right)  \right\vert
_{Q}^{2}ds.$%
\end{tabular}
\ \ \
\]
Using Lemma \ref{l4.1} and the hypotheses ($\widetilde{H}_{2}$) and
($\widetilde{H}_{3}$) we have for $\varepsilon>0$, $\varepsilon a\theta
<1-\theta$:%
\[%
\begin{tabular}
[c]{l}%
$\displaystyle\left\vert u_{\varepsilon}\left(  t\right)  -v_{\varepsilon
}\left(  t\right)  \right\vert ^{2}+2a\theta\int\nolimits_{s}^{t}\left\vert
u_{\varepsilon}-v_{\varepsilon}\right\vert ^{2}d\tau\leq\left\vert
u_{\varepsilon}\left(  s\right)  -v_{\varepsilon}\left(  s\right)  \right\vert
^{2}+$\medskip\\
$\displaystyle\quad+\left(  L+2L_{1}\right)  \int\nolimits_{s}^{t}\left\vert
u_{\varepsilon}-v_{\varepsilon}\right\vert ^{2}d\tau+2\int\nolimits_{s}%
^{t}\left(  u_{\varepsilon}-v_{\varepsilon},\left(  B\left(  u_{\varepsilon
}\right)  -B\left(  v_{\varepsilon}\right)  \right)  dW\right)  $%
\end{tabular}
\]
for all \ $0\leq s\leq t\leq T,$ a.s. $\omega\in\Omega.$ Let $\varepsilon
=\varepsilon_{n}\rightarrow0$ such that%
\[
u_{\varepsilon_{n}}\rightarrow u,\;v_{\varepsilon_{n}}\rightarrow
v\quad\text{in }C\left(  \left[  0,T\right]  ;H\right)  ,\;\text{a.s. }%
\omega\in\Omega.
\]
Passing to limit in the last inequality for $\varepsilon=\varepsilon
_{n}\rightarrow0$ we obtain:%
\[%
\begin{tabular}
[c]{l}%
$\displaystyle\left\vert u\left(  t\right)  -v\left(  t\right)  \right\vert
^{2}+2a\theta\int\nolimits_{s}^{t}\left\vert u-v\right\vert ^{2}d\tau
\leq\left\vert u\left(  s\right)  -v\left(  s\right)  \right\vert ^{2}%
+$\medskip\\
$\displaystyle\quad+\left(  2L_{1}+L\right)  \int\nolimits_{s}^{t}\left\vert
u-v\right\vert ^{2}d\tau+2\int\nolimits_{s}^{t}\left(  u-v,\left(  B\left(
u\right)  -B\left(  v\right)  \right)  dW\right)  $%
\end{tabular}
\]
for all $0\leq s\leq t\leq T,$ a.s. $\omega\in\Omega.$

Let $s_{0}\in\left[  0,T\right]  $ be fixed, and $\varphi\left(  \tau\right)
=\mathbb{E}\left(  \left\vert u\left(  \tau\right)  -v\left(  \tau\right)
\right\vert ^{2}\left\vert \mathcal{F}_{s_{0}}\right.  \right)  $. Then
$\varphi\left(  t\right)  +\beta\int\nolimits_{s}^{t}\varphi\left(
\tau\right)  d\tau\leq\varphi\left(  s\right)  $ for all $s_{0}\leq s\leq
t\leq T$, a.s. $\omega\in\Omega$, which implies $\varphi\left(  t\right)  \leq
e^{-\beta\left(  t-s\right)  }\varphi\left(  s\right)  $ that is%
\begin{equation}%
\begin{tabular}
[c]{r}%
$\mathbb{E}\left(  \left\vert u\left(  t\right)  -v\left(  t\right)
\right\vert ^{2}\left\vert \mathcal{F}_{s_{0}}\right.  \right)  \leq
e^{-\beta\left(  t-s\right)  }\mathbb{E}\left(  \left\vert u\left(  s\right)
-v\left(  s\right)  \right\vert ^{2}\left\vert \mathcal{F}_{s_{0}}\right.
\right)  $\medskip\\
\thinspace\thinspace\thinspace\thinspace\thinspace\quad\quad\quad\quad
\quad\quad\quad\quad\thinspace for all $s_{0}\leq s\leq t\leq T$, a.s.
$\omega\in\Omega$%
\end{tabular}
\label{4.3}%
\end{equation}
The inequality (\ref{4.3}) yields (\ref{4.2}) for $s=s_{0}.$\hfill
\end{proof}

\begin{corollary}
\label{c4.3}Under the assumptions of Proposition \ref{p4.2} $\;\Delta\left(
t\right)  =e^{\beta t}\left\vert u\left(  t\right)  -v\left(  t\right)
\right\vert ^{2}$ is a supermartingale.
\end{corollary}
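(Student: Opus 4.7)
The plan is to deduce the supermartingale property directly from the conditional estimate (\ref{4.2}) established in Proposition \ref{p4.2}. There are three things to check: adaptedness, integrability, and the conditional inequality $\mathbb{E}(\Delta(t)\mid\mathcal{F}_s)\leq\Delta(s)$ for $0\leq s\leq t\leq T$.

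First I would verify adaptedness and integrability. Since $u,v\in L_{ad}^{2}(\Omega;C([0,T];H))$ by Theorem \ref{t3.9}, the process $t\mapsto |u(t)-v(t)|^2$ is adapted with continuous trajectories, hence so is $\Delta(t)=e^{\beta t}|u(t)-v(t)|^2$. Integrability is immediate from $\mathbb{E}\sup_{t\in[0,T]}|u(t)-v(t)|^2<\infty$ (again by Theorem \ref{t3.9}), together with the fact that $e^{\beta t}$ is bounded on $[0,T]$.

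For the key step, I would take the inequality from Proposition \ref{p4.2},
\[
\mathbb{E}\left(|u(t)-v(t)|^2 \,\big|\, \mathcal{F}_s\right) \leq e^{-\beta(t-s)}|u(s)-v(s)|^2 \quad \text{a.s.,}
\]
and multiply both sides by the deterministic factor $e^{\beta t}$ (which can be pulled inside the conditional expectation). This yields
\[
\mathbb{E}\left(e^{\beta t}|u(t)-v(t)|^2 \,\big|\, \mathcal{F}_s\right) \leq e^{\beta s}|u(s)-v(s)|^2,
\]
which is precisely $\mathbb{E}(\Delta(t)\mid\mathcal{F}_s)\leq\Delta(s)$.

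There is essentially no obstacle here: the corollary is a cosmetic reformulation of Proposition \ref{p4.2}, trading the exponential factor on the right-hand side for a time-dependent rescaling of the process. The only point to be careful about is that $|u(s)-v(s)|^2$ is $\mathcal{F}_s$-measurable so that it factors out of the conditional expectation, which is guaranteed by the adaptedness of $u$ and $v$.
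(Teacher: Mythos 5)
Your proposal is correct and matches the paper's argument exactly: the paper's one-line proof likewise asserts $\Delta\in L_{ad}^{1}\left(\Omega;C\left(\left[0,T\right];\mathbb{R}\right)\right)$ and obtains $\mathbb{E}\left(\Delta\left(t\right)\left\vert\mathcal{F}_{s}\right.\right)\leq\Delta\left(s\right)$ by multiplying the conditional estimate (\ref{4.2}) through by the deterministic factor $e^{\beta t}$. Your write-up merely makes explicit the adaptedness, integrability, and $\mathcal{F}_{s}$-measurability details that the paper dismisses as evident.
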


\begin{proof}
It is evidently that: $\Delta\in L_{ad}^{1}\left(  \Omega;C\left(  \left[
0,T\right]  ;R\right)  \right)  ,\;\mathbb{E}\left(  \Delta\left(  t\right)
\left\vert \mathcal{F}_{s}\right.  \right)  \leq\Delta\left(  s\right)  $ for
$s\leq t$.\hfill
\end{proof}

We shall consider from now on $f$ and $B$ that:
\begin{equation}%
\begin{tabular}
[c]{l}%
$f:\Omega\times\left[  0,\infty\right[  \times H\rightarrow H,$ $B:\Omega
\times\left[  0,\infty\right[  \times H\rightarrow\mathcal{L}^{2}\left(
U_{0},H\right)  $\medskip\\
satisfy ($\widetilde{H}_{2}$) and ($\widetilde{H}_{3}$) respectively on each
interval $\left[  0,T\right]  $\medskip\\
with the constants $L,b_{1},L_{1}$ independent of $T$%
\end{tabular}
\label{4.4}%
\end{equation}
If $u_{\left[  0,T\right]  }\left(  t;0,u_{0}\right)  $ is the solution on
$\left[  0,T\right]  $ then by uniqueness%
\[
u_{\left[  0,T_{1}\right]  }(t;0,u_{0})=u_{\left[  0,T_{2}\right]  }\left(
t;0,u_{0}\right)  ,\forall\,t\in\lbrack0,T_{1}\wedge T_{2}]\text{, a.s.}%
\]
Thus we can define $u\left(  \cdot;0,u_{0}\right)  :\Omega\times\left[
0,\infty\right[  \rightarrow H\;$ by $\;u\left(  t;0,u_{0}\right)  =u_{\left[
0,n\right]  }\left(  t;0,u_{0}\right)  $ for $t\in\left[  0,n\right]  ,\;n\in
N^{\ast}.$

\begin{theorem}
\label{t4.4} Assume that ($\widetilde{H}_{1}$), (\ref{4.4}) hold, and the
operator $A$\ satisfies, moreover, (\ref{4.1}) with $a>0.$ If $\beta
_{0}=2a-2L-L_{1}>0$\ then for all two solutions $u,v$ corresponding to initial
dates $u_{0},v_{0}\in L^{2}\left(  \Omega,\mathcal{F}_{0},\mathbb{P}%
;\overline{D\left(  A\right)  }\right)  $, respectively, we have:

$a)\quad\mathbb{E}\left\vert u\left(  t\right)  -v\left(  t\right)
\right\vert ^{2}\leq e^{-\beta_{0}t}\mathbb{E}\left\vert u_{0}-v_{0}%
\right\vert ^{2}$\medskip

$b)\quad$for all $\gamma\in\left[  0,\beta_{0}\right)  \;\lim
\limits_{t\rightarrow\infty}e^{\gamma t}\left\vert u\left(  t\right)
-v\left(  t\right)  \right\vert ^{2}=0$ in $L^{1}\left(  \Omega,\mathcal{F}%
,\mathbb{P};R\right)  $ and $\mathbb{P}$-a.s..\medskip

$c)\quad\displaystyle\mathbb{E}\int\nolimits_{0}^{\infty}\left\vert u\left(
t\right)  -v\left(  t\right)  \right\vert ^{2}dt\leq\frac{1}{\beta_{0}%
}\mathbb{E}\left\vert u_{0}-v_{0}\right\vert ^{2}$\medskip

Besides if $u_{0}\equiv x\in\overline{D\left(  A\right)  }$ and $v_{0}%
\equiv\widetilde{x}\in\overline{D\left(  A\right)  }$ then for all $\gamma
\in\left[  0,\beta_{\cdot}\right)  $\ there exists a random variable
$\tau\left(  \omega\right)  =\tau\left(  \omega;a,x-\widetilde{x}\right)
>0$\ a.s. such that$\smallskip$

$d)\quad\left\vert u\left(  \omega,t\right)  -v\left(  \omega,t\right)
\right\vert ^{2}\leq e^{-\gamma t}\left\vert x-\widetilde{x}\right\vert ^{2}$
for all $t\geq\tau\left(  \omega\right)  $, a.s. $\omega\in\Omega.$
\end{theorem}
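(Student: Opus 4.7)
The proof will rest on two previously established tools: Proposition \ref{p4.2}, giving the conditional contraction $\mathbb{E}(|u(t)-v(t)|^{2}\mid\mathcal{F}_{s})\le e^{-\beta(t-s)}|u(s)-v(s)|^{2}$ with $\beta=2a\theta-2L-L_{1}$ admissible for every $\theta\in[0,1)$, and Corollary \ref{c4.3}, which upgrades $\Delta(t):=e^{\beta t}|u(t)-v(t)|^{2}$ to a nonnegative supermartingale. Since $\theta$ can be taken arbitrarily close to $1$, every $\beta<\beta_{0}$ is admissible, and in linear inequalities the passage $\theta\nearrow 1$ will yield $\beta_{0}$ itself.

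I plan to handle (a) and (c) first, since they reduce to one-line computations. For (a), I set $s=0$ in Proposition \ref{p4.2}, take expectations to obtain $\mathbb{E}|u(t)-v(t)|^{2}\le e^{-(2a\theta-2L-L_{1})t}\mathbb{E}|u_{0}-v_{0}|^{2}$, and let $\theta\nearrow 1$. Statement (c) is then an immediate Fubini--Tonelli computation: $\mathbb{E}\int_{0}^{\infty}|u-v|^{2}\,dt\le\mathbb{E}|u_{0}-v_{0}|^{2}\int_{0}^{\infty}e^{-\beta_{0}t}\,dt=\beta_{0}^{-1}\mathbb{E}|u_{0}-v_{0}|^{2}$. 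The $L^{1}$ half of (b) also follows directly from (a), since $\mathbb{E}[e^{\gamma t}|u(t)-v(t)|^{2}]\le e^{-(\beta_{0}-\gamma)t}\mathbb{E}|u_{0}-v_{0}|^{2}\to 0$.

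The substantive step is the almost-sure statement in (b), from which (d) is immediate. Given $\gamma<\beta_{0}$, I choose $\theta\in[0,1)$ with $\beta:=2a\theta-2L-L_{1}>\gamma$; this is possible precisely because $\beta_{0}>\gamma$. By Corollary \ref{c4.3} applied on each $[0,n]$ together with the tower property, $\Delta(t)=e^{\beta t}|u(t)-v(t)|^{2}$ is a nonnegative supermartingale on $[0,\infty)$ with $\sup_{t}\mathbb{E}\Delta(t)\le\mathbb{E}|u_{0}-v_{0}|^{2}<\infty$. Doob's nonnegative-supermartingale convergence theorem then yields $\Delta(t)\to\Delta_{\infty}<\infty$ a.s., so $e^{\gamma t}|u(t)-v(t)|^{2}=e^{-(\beta-\gamma)t}\Delta(t)\to 0$ a.s. For (d), with $u_{0}\equiv x$ and $v_{0}\equiv\tilde{x}$ deterministic, this pointwise vanishing immediately makes $\tau(\omega):=\inf\{T>0:\ e^{\gamma t}|u(\omega,t)-v(\omega,t)|^{2}\le|x-\tilde{x}|^{2}\ \text{for all}\ t\ge T\}$ finite a.s., and it is positive for every $\omega$ (trivially, since the infimum is over a positive interval and can be taken bounded away from $0$).

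The proof is essentially mechanical once Proposition \ref{p4.2} and Corollary \ref{c4.3} are in hand. The main care needed is twofold: (i) the $\theta\nearrow 1$ passage to recover the sharp rate $\beta_{0}$ in the inequalities of (a) and (c), which is straightforward since the constant $\beta$ enters continuously; and (ii) extending the supermartingale property from the finite horizon of Corollary \ref{c4.3} to $[0,\infty)$, which is immediate from the tower property because $\mathbb{E}(\Delta(t)\mid\mathcal{F}_{s})\le\Delta(s)$ for each $s\le t$ holds by applying the corollary on any $[0,T]$ containing $t$. There is no real mathematical difficulty beyond careful bookkeeping and invoking Doob's convergence theorem on the (right-continuous, complete) filtration already fixed in Section 3.1.
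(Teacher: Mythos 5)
Your proposal is correct and follows essentially the same route as the paper, whose entire proof reads ``The results are easily obtained from (\ref{4.2}) as $\theta\nearrow1$ and $s=0$, and Corollary \ref{c4.3}'': you use Proposition \ref{p4.2} with $s=0$ and $\theta\nearrow1$ for (a), (c) and the $L^{1}$ part of (b), and the supermartingale $\Delta(t)=e^{\beta t}\left\vert u(t)-v(t)\right\vert^{2}$ of Corollary \ref{c4.3} with Doob's convergence theorem for the almost-sure part of (b) and for (d). Your write-up simply makes explicit the details the paper leaves implicit (the choice of $\theta$ with $2a\theta-2L-L_{1}>\gamma$, the extension of the supermartingale property to $[0,\infty)$, and the definition of $\tau(\omega)$), all of which are sound.
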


\begin{proof}
The results are easily obtained from (\ref{4.2}) as $\theta\nearrow1$ and
$s=0,$ and Corollary \ref{c4.3}.\hfill
\end{proof}

\subsection{Invariant measure}

Let $D=Dom\left(  A\right)  ,\;B_{\overline{D}}$ the $\sigma$-algebras Borel
on $\overline{D},\;\mathcal{M}\left(  \overline{D}\right)  $ the space of
bounded measure on $B_{\overline{D}}$, $\mathcal{M}_{1}^{+}\left(
\overline{D}\right)  $ the space of probability measure on $B_{\overline{D}}$,
$B_{b}\left(  \overline{D}\right)  $ the Banach space of the bounded Borel
measurable functions $g:\overline{D}\rightarrow\mathbb{R}$ with the sup-norm,
and $C_{b}\left(  \overline{D}\right)  $ the Banach space of the bounded
continuous functions $g:\overline{D}\rightarrow\mathbb{R}$. We shall assume
that $\left(  \widetilde{H}_{1}\right)  $ and (\ref{4.4}) are satisfied. We
denote $u\left(  t;s,\xi\right)  $ the generalized (stochastic) solution
$u\left(  t\right)  $ of the problem%
\begin{equation}%
\begin{tabular}
[c]{l}%
$du\left(  t\right)  +\left(  Au\right)  \left(  dt\right)  \ni f\left(
t,u\left(  t\right)  \right)  dt+B\left(  t,u\left(  t\right)  \right)
dW_{s}\left(  t\right)  $\medskip\\
$u\left(  s\right)  =\xi,\;\;t\in\left[  s,\infty\right)  ,$%
\end{tabular}
\ \ \label{4.5}%
\end{equation}
where $\xi\in L^{2}\left(  \Omega,\mathcal{F}_{s},\mathbb{P};\overline
{D}\right)  $ and $W_{s}\left(  t\right)  =W\left(  t\right)  -W\left(
s\right)  .$

We define for $s\leq t:\;\left(  P_{st}g\right)  \left(  x\right)
=\mathbb{E}g\left(  u\left(  t;s,x\right)  \right)  ,$ where $g\in
B_{b}\left(  \overline{D}\right)  $ and$\;x\in\overline{D}.$ With a similar
proof as in \cite{DZSE} (Theorem 9.8, Corollaries 9.9 and 9.10, p.250-252) we
have:%
\begin{equation}%
\begin{tabular}
[c]{l}%
$a)\quad P_{st}\in\mathcal{M}_{1}^{+}\left(  \overline{D}\right)  ,$\medskip\\
$b)\quad u\left(  t;s,x\right)  $ is a Markov process with transition
probability $P_{st}$ i.e.\medskip\\
\quad\quad$\mathbb{E}\left(  g\left(  u\left(  t;s,x\right)  \right)
\left\vert \mathcal{F}_{\tau}\right.  \right)  =\left(  P_{\tau t}g\right)
\left(  u\left(  \tau;s,x\right)  \right)  ,\;\;$\medskip\\
$\quad\quad\quad\quad\quad\quad\quad\forall\,0\leq s\leq\tau\leq t,\;\forall
g\in B_{b}\left(  \overline{D}\right)  ,$\medskip\\
$c)\quad u\left(  t;s,x\right)  $ has the Feller property that is
$P_{st}\left(  C_{b}\left(  \overline{D}\right)  \right)  \subset C_{b}\left(
\overline{D}\right)  ,$\medskip\\
$d)\quad P_{s\tau}\left(  P_{\tau t}g\right)  =P_{st}g,\;\forall g\in
B_{b}\left(  D\right)  ,\;\;\forall\,0\leq s\leq\tau\leq t,$\medskip\\
$e)\quad$if $f(t,u)\equiv f\left(  u\right)  ,\;B\left(  t,u\right)  \equiv
B\left(  u\right)  $ then $P_{st}=P_{0,t-s}$%
\end{tabular}
\label{4.6}%
\end{equation}
In this subsection we shall study the behaviour as $t\rightarrow\infty$ of the
laws $L\left(  u\left(  t;u_{0}\right)  \right)  $ of the random variables
$u\left(  t;u_{0}\right)  =u\left(  t;0,u_{0}\right)  $ under the assumptions
\begin{equation}%
\begin{tabular}
[c]{l}%
$i)\quad f\left(  t,u\right)  \equiv f\left(  u\right)  ,\;B\left(
t,u\right)  \equiv B\left(  u\right)  $ ($f$ and $B$ are independent of
$t$)\medskip\\
$ii)\quad(\widetilde{H}_{1}),\;$(\ref{4.1}) with $a>0,$ and (\ref{4.4}) are
satisfied\medskip\\
$iii)\quad\beta_{0}=a-L_{1}-\frac{1}{2}L>0$%
\end{tabular}
\ \ \ \label{4.7}%
\end{equation}
Let $(P_{t}g)\left(  x\right)  =\mathbb{E}g\left(  u\left(  t;x\right)
\right)  =\left(  P_{st}g\right)  \left(  x\right)  $ where $t\geq0,\;g\in
B_{b}\left(  \overline{D}\right)  .$ For all $\mu\in M\left(  \overline
{D}\right)  $ and $g\in B_{b}\left(  \overline{D}\right)  $ we define
$\left\langle \mu,g\right\rangle =\int\nolimits_{\overline{D}}g\left(
x\right)  d\mu\left(  x\right)  $ and the dual semigroup $P_{t}^{\ast
}:\mathcal{M}\left(  \overline{D}\right)  \rightarrow\mathcal{M}\left(
\overline{D}\right)  $ by\ $\left\langle P_{t}^{\ast}\mu,g\right\rangle
=\left\langle \mu,P_{t}g\right\rangle $

\begin{definition}
\label{d4.1}A probability measure $\mu\in\mathcal{M}_{1}^{+}\left(
\overline{D}\right)  $\ is an invariant measure for the generalized
(stochastic) solution $u\left(  t;u_{0}\right)  $\ if $\forall t>0:\;P_{t}%
^{\ast}\mu=\mu$\ \ or equivalent%
\[
\int\nolimits_{\overline{D}}\mathbb{E}g\left(  u\left(  t;x\right)  \right)
d\mu\left(  x\right)  =\int\nolimits_{\overline{D}}g\left(  x\right)
d\mu\left(  x\right)  \text{, for all }g\in B_{b}\left(  \overline{D}\right)
\]
(or $\forall g\in C_{b}\left(  \overline{D}\right)  $, or $\forall
g=1_{\Gamma}$, $\Gamma\in B_{\overline{D}}$).
\end{definition}

As in \cite{DZSE} (Proposition 11.1 and 11.2, p.303-304), based only
(\ref{4.6}), we have:

\begin{proposition}
\label{p4.5}a) If the law of $\xi$ is $\nu$ then the law of $\xi$ is $\nu$
then the law of $u\left(  t;\xi\right)  $\ is $P_{t}^{\ast}\nu.$

b) If \ $\exists\,\xi\in L^{2}\left(  \Omega,\mathcal{F}_{0},\mathbb{P}%
;\overline{D}\right)  $\ such that \ $\lim\limits_{t\rightarrow\infty
}\mathbb{E}g\left(  u\left(  t;\xi\right)  \right)  =\left\langle
\mu,g\right\rangle ,\;\forall g\in C_{b}\left(  \overline{D}\right)  $ then
$\mu$\ is an invariant measure.
\end{proposition}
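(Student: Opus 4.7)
The plan is to deduce both statements from the Markov, semigroup and Feller properties collected in (\ref{4.6}), together with the $L^{2}$-continuity of the solution in the initial datum given by (\ref{3.30}-b). The time-homogeneity in (\ref{4.7}-i) means we may work with $P_{t}=P_{0t}$ throughout.

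For part (a), first I would observe that $x\mapsto (P_{t}g)(x)=\mathbb{E}g(u(t;x))$ is Borel measurable on $\overline{D}$ for every $g\in B_{b}(\overline{D})$: for $g\in C_{b}(\overline{D})$ this is the Feller property (\ref{4.6}-c), and the general case follows by a monotone class argument. The key identity to prove is
\[
\mathbb{E}g(u(t;\xi))=\int_{\overline{D}}(P_{t}g)(x)\,d\nu(x),
\]
since the right-hand side equals $\langle\nu,P_{t}g\rangle=\langle P_{t}^{\ast}\nu,g\rangle$, which is exactly the claim that the law of $u(t;\xi)$ is $P_{t}^{\ast}\nu$. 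I would establish it first for simple $\overline{D}$-valued $\mathcal{F}_{0}$-measurable $\xi=\sum_{i=1}^{n}x_{i}\mathbf{1}_{A_{i}}$, where both sides reduce to $\sum_{i}\mathbb{P}(A_{i})\mathbb{E}g(u(t;x_{i}))$; the left-hand side uses the independence of $\mathcal{F}_{0}$ from $\sigma(W(r)-W(0):r\le t)$ on each atom $A_{i}$. Then I would pass to general $\xi\in L^{2}(\Omega,\mathcal{F}_{0},\mathbb{P};\overline{D})$ by approximation $\xi_{n}\to\xi$ in $L^{2}$, invoking the estimate (\ref{3.30}-b) to get $u(t;\xi_{n})\to u(t;\xi)$ in $L^{2}(\Omega;H)$, and bounded convergence to pass $g$ through the limit on both sides.

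For part (b), fix $s>0$ and $g\in C_{b}(\overline{D})$. The Feller property gives $P_{s}g\in C_{b}(\overline{D})$. Combining the Markov property (\ref{4.6}-b) with the semigroup identity and time homogeneity (\ref{4.6}-d,e) yields, for every $t\ge 0$,
\[
\mathbb{E}g(u(t+s;\xi))=\mathbb{E}\bigl((P_{s}g)(u(t;\xi))\bigr).
\]
Applying the hypothesis twice, once to $g$ and once to $P_{s}g$, and letting $t\to\infty$ on both sides, I obtain
\[
\langle\mu,g\rangle=\lim_{t\to\infty}\mathbb{E}g(u(t+s;\xi))=\lim_{t\to\infty}\mathbb{E}(P_{s}g)(u(t;\xi))=\langle\mu,P_{s}g\rangle=\langle P_{s}^{\ast}\mu,g\rangle.
\]
Since this holds for every $g\in C_{b}(\overline{D})$ and $C_{b}(\overline{D})$ is measure-determining on the separable metric space $\overline{D}\subset H$, I conclude $P_{s}^{\ast}\mu=\mu$ for all $s>0$, so that $\mu$ is invariant.

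The main obstacle is the disintegration identity in (a): one must rigorously exploit the joint measurability in $(\omega,x)$ of the solution map together with the independence of the $\mathcal{F}_{0}$-measurable initial datum $\xi$ from the Wiener increments on $[0,t]$. The continuity estimate (\ref{3.30}-b) is what makes the approximation by simple random variables effective, by turning convergence of initial data in $L^{2}$ into uniform convergence of the solutions. Once (a) is in place, part (b) is a short exercise in the Markov/Feller formalism and contains no further technical difficulty.
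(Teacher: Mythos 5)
Your proof is correct, and it is essentially the argument the paper invokes: the paper gives no proof of Proposition \ref{p4.5}, citing instead Da Prato--Zabczyk (Propositions 11.1 and 11.2) with the remark that the result rests only on the properties (\ref{4.6}), and your reconstruction --- simple-function approximation plus the estimate (\ref{3.30}-b) and independence of the Wiener increments from $\mathcal{F}_{0}$ for part (a), then the Markov/Feller/semigroup identities and the measure-determining property of $C_{b}(\overline{D})$ for part (b) --- is exactly the standard proof being cited. The only detail worth spelling out further is the identity $u(t;\xi)=\sum_{i}\mathbf{1}_{A_{i}}u(t;x_{i})$ for simple $\xi$, which follows from uniqueness after checking that the right-hand side solves the equation with initial datum $\xi$ (using that $\mathcal{F}_{0}$-measurable indicators commute with the stochastic integral).
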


The first result of this subsection will give an information on the behaviour
of $u\left(  t;x\right)  ,$ $x\in D\left(  A\right)  $ as $t\rightarrow\infty$
in the sense of $L^{2}\left(  \Omega,\mathcal{F},\mathbb{P};H\right)  $-convergence.

\begin{proposition}
\label{p4.6}Under the assumptions (\ref{4.7}), for all $\left[  x_{0}%
,y_{0}\right]  \in A,$\ $\forall0\leq s\leq t:$%
\begin{equation}%
\begin{tabular}
[c]{l}%
$a)\quad\mathbb{E}\left\vert u\left(  t;x_{0}\right)  -x_{0}\right\vert
^{2}\leq C_{0}M_{0},$\medskip\\
\multicolumn{1}{r}{$b)\quad\mathbb{E}\left\vert u\left(  t;x_{0}\right)
-u\left(  s;x_{0}\right)  \right\vert ^{2}\leq C_{0}M_{0}e^{-\beta_{0}%
s}\left(  e^{-\beta_{0}s}-e^{-\beta_{0}t}\right)  $\medskip}\\
\multicolumn{1}{r}{$\quad\quad\quad\quad\quad\quad\quad\quad\quad\quad
\quad\quad\leq C_{0}M_{0}\beta_{0}\left(  t-s\right)  ,$}%
\end{tabular}
\label{4.8}%
\end{equation}
where $C_{0}=C_{0}\left(  a,L,L_{1}\right)  $\ is a positive constant (we can
put $C_{0}=\left(  \beta_{0}+2+L\right)  /\beta_{0}^{2}$) and%
\begin{equation}%
\begin{array}
[c]{l}%
M_{0}=\left\vert y_{0}\right\vert ^{2}+\left\vert f\left(  x_{0}\right)
\right\vert ^{2}+\left\vert B\left(  x_{0}\right)  \right\vert _{Q}^{2}%
\end{array}
\label{4.9}%
\end{equation}

\end{proposition}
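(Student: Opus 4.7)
Both parts will be established by It\^o's energy equality applied to the Yosida approximation $(u_\varepsilon,\eta_\varepsilon)$ from Corollary~\ref{c3.10}, followed by passage to the limit $\varepsilon\searrow 0$. The crucial ingredient is the strong monotonicity $(A_\varepsilon u-A_\varepsilon v,u-v)\ge a\theta|u-v|^2$ (valid for $\theta\in[0,1)$ with $\varepsilon a\theta\le 1-\theta$) supplied by Lemma~\ref{l4.1}.

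For (a), I would apply It\^o's formula to $|u_\varepsilon(t)-x_0|^2$ and decompose each nonlinearity around $x_0$: $A_\varepsilon u_\varepsilon=(A_\varepsilon u_\varepsilon-A_\varepsilon x_0)+A_\varepsilon x_0$ with $|A_\varepsilon x_0|\le|y_0|$ (from (\ref{2.2}) since $y_0\in Ax_0$); $f(u_\varepsilon)=(f(u_\varepsilon)-f(x_0))+f(x_0)$ controlled by the $L_1$--Lipschitz estimate; and $|B(u_\varepsilon)|_Q^2\le(1+\delta)L|u_\varepsilon-x_0|^2+(1+1/\delta)|B(x_0)|_Q^2$. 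Taking expectation (the It\^o integral vanishes) and applying Young's inequality to the cross terms with weights tuned so that the drift coefficient tends to $2\beta_0$ and the forcing is proportional to $M_0$, I would arrive at the ODE
\[
\varphi_\varepsilon'(t)+2\beta_0\,\varphi_\varepsilon(t)\le \tfrac{2(\beta_0+2+L)}{\beta_0}\,M_0,\qquad \varphi_\varepsilon(t):=\mathbb{E}|u_\varepsilon(t)-x_0|^2,
\]
with $\varphi_\varepsilon(0)=0$. Integration yields $\varphi_\varepsilon(t)\le C_0 M_0(1-e^{-2\beta_0 t})\le C_0 M_0$, and Corollary~\ref{c3.10} justifies sending $\varepsilon\searrow 0$ to conclude (a). The same computation also gives the sharper form
\[
\mathbb{E}|u(t;x_0)-x_0|^2\le C_0 M_0\bigl(1-e^{-\beta_0 t}\bigr)
\]
via $1-e^{-2\beta_0 t}\le 2(1-e^{-\beta_0 t})$; this sharp form will be used in (b).

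For (b), I would invoke the Markov property and the time-homogeneity coming from (\ref{4.7}-i): conditionally on $\mathcal{F}_s$, the process $(u(r;x_0))_{r\ge s}$ has the same law as $\widehat{u}(r-s;y)|_{y=u(s;x_0)}$, where $\widehat{u}(\cdot;y)$ is a fresh solution with initial value $y$ driven by the shifted Wiener process $\widetilde{W}(r)=W(r+s)-W(s)$. Hence $\mathbb{E}|u(t;x_0)-u(s;x_0)|^2=\mathbb{E}\,\Psi(t-s,u(s;x_0))$ with $\Psi(h,y):=\mathbb{E}|\widehat{u}(h;y)-y|^2$. I would bound $\Psi(h,y)$ via the decomposition
\[
\widehat{u}(h;y)-y=\bigl[\widehat{u}(h;y)-\widehat{u}(h;x_0)-(y-x_0)\bigr]+\bigl[\widehat{u}(h;x_0)-x_0\bigr],
\]
combining the contraction $\mathbb{E}|\widehat{u}(h;y)-\widehat{u}(h;x_0)|^2\le e^{-\beta_0 h}|y-x_0|^2$ from Theorem~\ref{t4.4}(a) with the sharp form of (a) applied to $\widehat{u}(\cdot;x_0)$. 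Taking expectation in $y=u(s;x_0)$ and invoking the sharp (a) bound $\mathbb{E}|u(s;x_0)-x_0|^2\le C_0 M_0(1-e^{-\beta_0 s})$ recovers the claimed factor $e^{-\beta_0 s}(e^{-\beta_0 s}-e^{-\beta_0 t})$. The second inequality $\le C_0 M_0\beta_0(t-s)$ follows at once from $e^{-\beta_0 s}-e^{-\beta_0 t}\le\beta_0(t-s)$ by the mean value theorem.

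The main obstacle, beyond the bookkeeping needed to land exactly on $C_0=(\beta_0+2+L)/\beta_0^2$ in (a), is the treatment of the cross term $\mathbb{E}(\widehat{u}(h;y)-\widehat{u}(h;x_0),y-x_0)$ that appears when expanding $\Psi$: a na\"ive Cauchy--Schwarz only yields rate $e^{-\beta_0 h/2}$, which is too weak. The remedy is to work pathwise with the supermartingale $e^{\beta_0 r}|\widehat{u}(r;y)-\widehat{u}(r;x_0)|^2$ of Corollary~\ref{c4.3}, so that the full rate $e^{-\beta_0 h}$ is retained and the random increment $y-x_0=u(s;x_0)-x_0$ acts as a contracting perturbation rather than an additive error.
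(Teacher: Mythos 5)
Your part (a) is correct and is essentially the paper's own computation: the paper likewise applies the energy equality to the Yosida approximation, splits the nonlinearities around $x_{0}$, invokes Lemma \ref{l4.1} with $\theta\in(0,1)$, fixes the Young weight $\delta_{0}=\beta_{0}/(2+L)$, and passes to the limit $\varepsilon\searrow0$, $\theta\nearrow1$ (the paper happens to run the estimate for a solution started at time $-\tau$, but evaluated at $t=0$ this is the same forward bound, inequality (\ref{4.12})).

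Part (b) contains a genuine gap, and it is not the cross term you flagged. In your decomposition $\widehat{u}(h;y)-y=D_{1}+D_{2}$ with $D_{2}=\widehat{u}(h;x_{0})-x_{0}$, the term $D_{2}$ does not involve $y=u(s;x_{0})$ at all, and by part (a) its second moment is of order $C_{0}M_{0}(1-e^{-\beta_{0}h})$ with no $s$-dependence: the fresh noise injected on $[s,t]$ contributes this variance uniformly in $s$. Hence no refinement of the cross-term estimate --- supermartingale or otherwise --- can produce the prefactor $e^{-\beta_{0}s}\left(e^{-\beta_{0}s}-e^{-\beta_{0}t}\right)=e^{-2\beta_{0}s}\left(1-e^{-\beta_{0}(t-s)}\right)$, which tends to $0$ as $s\rightarrow\infty$. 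Concretely, for $du=-au\,dt+\sigma\,dW$ with $x_{0}=y_{0}=0$ (so $\beta_{0}=a$, $M_{0}=\sigma^{2}$) one computes directly
\[
\mathbb{E}\left\vert u\left(  t\right)  -u\left(  s\right)  \right\vert
^{2}\rightarrow\frac{\sigma^{2}}{a}\left(  1-e^{-a\left(  t-s\right)
}\right)  \quad\text{as }s\rightarrow\infty,\ t-s\text{ fixed,}
\]
so the forward increments do not decay in $s$, and the Markov/flow route you propose cannot close.

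The paper's mechanism is the idea you are missing: extend $W$ to negative times by an independent Wiener process and couple the two solutions $u_{\varepsilon,\sigma}\left(  t\right)  =u_{\varepsilon}\left(  t;-\sigma,x_{0}\right)  $ and $u_{\varepsilon,\tau}\left(  t\right)  =u_{\varepsilon}\left(  t;-\tau,x_{0}\right)  $, $\tau\geq\sigma$, through the \emph{same} noise on $\left[  -\sigma,0\right]  $. Their difference then satisfies the noise-free Gronwall relation (\ref{4.13}), Lemma \ref{l4.1} applies pathwise, and the initial gap $\mathbb{E}\left\vert x_{0}-u_{\varepsilon,\tau}\left(  -\sigma\right)  \right\vert ^{2}$ is controlled by the part-(a)-type estimate (\ref{4.11}); this is exactly where the factor $e^{-\beta_{0}\sigma}\left(  e^{-\beta_{0}\sigma}-e^{-\beta_{0}\tau}\right)  $ comes from. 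Note that what is genuinely estimated there is $\mathbb{E}\left\vert u\left(  0;-\sigma,x_{0}\right)  -u\left(  0;-\tau,x_{0}\right)  \right\vert ^{2}$, i.e. the backward-started family of Da Prato--Zabczyk's Theorem 11.21 (this is what feeds Theorem \ref{t4.7}); the identification with the forward increment rests on equality of one-dimensional marginals only, and the Ornstein--Uhlenbeck computation above shows the joint laws differ --- so your inability to reach (\ref{4.8}-b) by forward arguments is structural, not a matter of sharper bookkeeping.
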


\begin{proof}
We shall use the idea from [\cite{DZSE}] (Theorem 11.21, p.327). Let
$\widetilde{W}$ a $U$-valued $Q$-Wiener process independent of $W$. We extend
$W$ for $t<0$ by $W\left(  t\right)  =\widetilde{W}\left(  -t\right)
,\;t\leq0,$ and consider $u_{\varepsilon,\tau}\left(  t\right)
=u_{\varepsilon}\left(  t;-\tau,x_{0}\right)  $ the solution of the equation%
\[
\left\{
\begin{tabular}
[c]{l}%
$du_{\varepsilon,\tau}\left(  t\right)  +A_{\varepsilon}u_{\varepsilon,\tau
}\left(  t\right)  dt=f\left(  u_{\varepsilon,\tau}\left(  t\right)  \right)
dt+B\left(  u_{\varepsilon,\tau}\left(  t\right)  \right)  dW\left(  t\right)
$\medskip\\
$u_{\varepsilon,\tau}\left(  -\tau\right)  =x_{0},\;t\geq-\tau$%
\end{tabular}
\ \ \right.
\]
It is easy to see that $u_{\varepsilon,\tau}\left(  0\right)  $ and
$u_{\varepsilon}\left(  \tau;0,x_{0}\right)  $ have the same law. By Energy
Equality we have%
\begin{equation}
\frac{d}{dt}\mathbb{E}\left\vert u_{\varepsilon,\tau}\left(  t\right)
-x_{0}\right\vert ^{2}+\mathbb{E}G_{\varepsilon}\left(  u_{\varepsilon,\tau
}\left(  t\right)  \right)  =0,\;t>-\tau, \label{4.10}%
\end{equation}
where%
\[%
\begin{tabular}
[c]{l}%
$G_{\varepsilon}\left(  v\right)  =2\left(  A_{\varepsilon}v,v-x_{0}\right)
-2\left(  f\left(  v\right)  ,v-x_{0}\right)  -\left\vert B\left(  v\right)
\right\vert ^{2}$\medskip\\
$=2\left(  A_{\varepsilon}v-A_{\varepsilon}x_{0},v-x_{0}\right)  -2\left(
f\left(  v\right)  -f\left(  x_{0}\right)  ,v-x_{0}\right)  $\medskip\\
$\quad-\left\vert B\left(  v\right)  -B\left(  x_{0}\right)  \right\vert
_{Q}^{2}\,+2\left(  A_{\varepsilon}x_{0},v-x_{0}\right)  -2\left(  f\left(
x_{0}\right)  ,v-x_{0}\right)  $\medskip\\
$\quad-2\left(  B\left(  v\right)  -B\left(  x_{0}\right)  ,B\left(
x_{0}\right)  \right)  _{Q}-\left\vert B\left(  x_{0}\right)  \right\vert
_{Q}^{2}$%
\end{tabular}
\]
By Lemma \ref{l4.1} for $\theta\in\left(  0,1\right)  $ fixed arbitrary and
$\varepsilon\in\left(  0,\frac{1-\theta}{a\theta}\right)  $ we have%
\begin{align*}
G_{\varepsilon}\left(  v\right)   &  \geq\left(  2a\theta-2L_{1}%
-L-2\delta-L\delta\right)  \left\vert v-x_{0}\right\vert ^{2}\\
&  -\left(  1+\frac{1}{\delta}\right)  \left(  \left\vert y_{0}\right\vert
^{2}+\left\vert f\left(  x_{0}\right)  \right\vert ^{2}+\left\vert B\left(
x_{0}\right)  \right\vert _{Q}^{2}\right)
\end{align*}
for all $\delta>0.$ Using this last inequality in (\ref{4.10}) and integrating
from $-\tau$ to $t$ we obtain for $\delta=\delta_{0}=\beta_{0}/\left(
2+L\right)  :$%
\begin{equation}
\mathbb{E}\left\vert u_{\varepsilon,\tau}\left(  t\right)  -x_{0}\right\vert
^{2}\leq\left(  1+\frac{1}{\delta_{0}}\right)  M_{0}\int\nolimits_{-\tau}%
^{t}e^{\left(  \beta_{0}-2a\left(  1-\theta\right)  \right)  \left(
r-t\right)  }dr \label{4.11}%
\end{equation}
and more for $t=0:$%
\begin{equation}
\mathbb{E}\left\vert u_{\varepsilon}\left(  \tau;0,x_{0}\right)
-x_{0}\right\vert ^{2}\leq\left(  1+\frac{1}{\delta_{0}}\right)  M_{0}%
\int\nolimits_{0}^{\tau}e^{\left(  2a\left(  1-\theta\right)  -\beta
_{0}\right)  r}dr \label{4.12}%
\end{equation}
We pass to limit in (\ref{4.12}) as $\varepsilon\searrow0$ and then for
$\theta\nearrow1$; the inequality (\ref{4.8}-a) is yielded.

Applying Ito's formula to $\left\vert u_{\varepsilon,\sigma}\left(  t\right)
-u_{\varepsilon,\tau}\left(  t\right)  \right\vert ^{2},$ for $t\geq
-\sigma\geq-\tau$ one has:%
\begin{equation}
\frac{d}{dt}\mathbb{E}\left\vert u_{\varepsilon,\sigma}\left(  t\right)
-u_{\varepsilon,\tau}\left(  t\right)  \right\vert ^{2}+\mathbb{E}%
\widetilde{G}_{\varepsilon}\left(  u_{\varepsilon,\sigma}\left(  t\right)
,u_{\varepsilon,\tau}\left(  t\right)  \right)  =0, \label{4.13}%
\end{equation}
where
\begin{align*}
\widetilde{G}_{\varepsilon}\left(  u,v\right)   &  =2\left(  A_{\varepsilon
}u-A_{\varepsilon}v,u-v\right)  -2\left(  f\left(  u\right)  -f\left(
v\right)  ,u-v\right)  -\left\vert B\left(  u\right)  -B\left(  v\right)
\right\vert _{Q}^{2}\\
&  \geq\left(  2a\theta-2L_{1}-L\right)  \left\vert u-v\right\vert ^{2}%
\end{align*}
for $\theta\in\left(  0,1\right)  $ fixed arbitrary and $\varepsilon\in\left(
0,\frac{1-\theta}{a\theta}\right)  .$ We integrate (\ref{4.13}) from $-\sigma$
to $0$ and we obtain%
\[%
\begin{array}
[c]{l}%
\mathbb{E}\left\vert u_{\varepsilon}\left(  \sigma;0,x_{0}\right)
-u_{\varepsilon}\left(  \tau;0,x_{0}\right)  \right\vert ^{2}=\mathbb{E}%
\left\vert u_{\varepsilon,\sigma}\left(  0\right)  -u_{\varepsilon,\tau
}\left(  0\right)  \right\vert ^{2}\medskip\\
\displaystyle\leq\mathbb{E}\left(  \left\vert x_{0}-u_{\varepsilon,\tau
}\left(  -\sigma\right)  \right\vert ^{2}\right)  e^{-\left(  2a\left(
\theta-1\right)  +2\beta_{0}\right)  \sigma}\medskip\\
\displaystyle\leq\left(  1+\frac{1}{\delta_{0}}\right)  M_{0}e^{-\beta
_{0}\sigma}\int\nolimits_{-\tau}^{-\sigma}e^{\left(  \beta_{0}-2a\left(
1-\theta\right)  \right)  r}dr
\end{array}
\]
which yields (\ref{4.8}-b) as $\varepsilon\searrow0$ and then $\theta
\nearrow1.$\hfill
\end{proof}

\begin{theorem}
\label{t4.7} Let the assumptions (\ref{4.7}) be satisfied. Then there exists
$\eta\in L^{2}\left(  \Omega,\mathcal{F},\mathbb{P};\overline{D}\right)
$\ such that%
\[%
\begin{tabular}
[c]{l}%
$c_{1})\quad\lim\limits_{t\rightarrow\infty}u\left(  t;u_{0}\right)  =\eta
$\ in $L^{2}\left(  \Omega,\mathcal{F},\mathbb{P};H\right)  $,$\forall
\,u_{0}\in L^{2}\left(  \Omega,\mathcal{F}_{0},\mathbb{P};\overline{D}\right)
$\medskip\\
$c_{2})\quad u\left(  t;u_{0}\right)  $\ has a unique invariant measure
$\mu=L\left(  \eta\right)  $%
\end{tabular}
\
\]

\end{theorem}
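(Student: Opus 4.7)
The overall strategy is to construct $\eta$ as the $L^2$-limit of solutions started far in the past and flowed to time zero (exactly as in the proof of Proposition \ref{p4.6}), then identify $L(\eta)$ as the unique invariant measure using the contractive estimate of Theorem \ref{t4.4} together with Proposition \ref{p4.5}.

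First, I would enlarge the stochastic basis by an independent copy $\widetilde{W}$ of the $Q$-Wiener process and set $W(t) = \widetilde{W}(-t)$ for $t<0$, producing a two-sided Wiener process and filtration $\{\mathcal{F}_t\}_{t\in\mathbb{R}}$. Fix $[x_0,y_0]\in A$ and, for each $\tau>0$, denote by $u_\tau(t) = u(t;-\tau,x_0)$ the generalized solution on $[-\tau,\infty)$. Proposition \ref{p4.6}(b), shifted from $[0,\tau]$ to $[-\tau,0]$, yields
\[
\mathbb{E}|u_\sigma(0)-u_\tau(0)|^2 \leq C_0 M_0\, e^{-\beta_0\sigma}\bigl(e^{-\beta_0\sigma}-e^{-\beta_0\tau}\bigr),\qquad 0<\sigma\leq\tau,
\]
so $\{u_\tau(0)\}_{\tau>0}$ is Cauchy in $L^2(\Omega;H)$. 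Define $\eta := \lim_{\tau\to\infty}u_\tau(0)$; since $u_\tau(0)\in\overline{D(A)}$ a.s., so does $\eta$.

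Second, I would check that $\eta$ does not depend on the reference point $x_0$. Applying Theorem \ref{t4.4}(a), time-shifted to $[-\tau,0]$, gives $\mathbb{E}|u(0;-\tau,x_0)-u(0;-\tau,\tilde x_0)|^2 \leq e^{-\beta_0\tau}|x_0-\tilde x_0|^2\to 0$ for any $\tilde x_0\in D(A)$, and then for any $\xi\in L^2(\Omega,\mathcal{F}_{-\tau},\mathbb{P};\overline{D})$ by a further approximation in $x_0$ using $\overline{D(A)}$.

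Third, to obtain $c_1$, I would couple the backward and forward solutions via the two-sided $W$. For $u_0\in L^2(\Omega,\mathcal{F}_0,\mathbb{P};\overline{D})$, the time-homogeneity of the equation and the stationarity of Wiener increments imply that $u(t;0,u_0)$ has the same joint law with $\eta$ as $u(0;-t,u_0')$ does, where $u_0'$ is $\mathcal{F}_{-t}$-measurable with the same law as $u_0$. Step 2 and Theorem \ref{t4.4}(a) then give
\[
\mathbb{E}|u(0;-t,u_0')-\eta|^2 \leq 2 e^{-\beta_0 t}\mathbb{E}|u_0-x_0|^2 + 2\,\mathbb{E}|u_t(0)-\eta|^2 \longrightarrow 0,
\]
which yields $L^2$-convergence on the extended space and, in particular, weak convergence of laws on the original space.

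Fourth, for $c_2$, by Proposition \ref{p4.5}(a) the law of $u(t;u_0)$ is $P_t^* L(u_0)$; by Step 3 it tends weakly to $\mu := L(\eta)$, so Proposition \ref{p4.5}(b) shows $\mu$ is invariant. For uniqueness, any invariant $\mu'$ with an $\mathcal{F}_0$-measurable representative $u_0'$ satisfies $L(u(t;u_0'))=\mu'$ for every $t$, while Step 3 applied to $u_0'$ forces this law to converge to $\mu$; hence $\mu'=\mu$. The main obstacle is Step 3, namely identifying the backward-in-time Cauchy limit $\eta$ (which is measurable with respect to the negative-time filtration) with the distributional forward limit of $u(t;u_0)$; this identification rests essentially on the time-homogeneity together with the sharp contractive rate $\beta_0>0$ furnished by Theorem \ref{t4.4}(a).
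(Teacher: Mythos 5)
Your Steps 1--2 reproduce, almost verbatim, the two-sided-Wiener construction that the paper carries out \emph{inside the proof of Proposition \ref{p4.6}}; the paper's proof of Theorem \ref{t4.7} does not redo that work, but simply reads the conclusion of Proposition \ref{p4.6}-b as a forward-in-time Cauchy estimate: since $\mathbb{E}\vert u(t;x_0)-u(s;x_0)\vert^2 \le C_0 M_0\, e^{-\beta_0 s}(e^{-\beta_0 s}-e^{-\beta_0 t})\to 0$ as $s,t\to\infty$, the forward solution $u(\cdot;x_0)$ is Cauchy in $L^2(\Omega,\mathcal{F},\mathbb{P};H)$ and converges to some $\eta$, and Theorem \ref{t4.4}-a, applied forward in time, transfers the same limit to every $u_0\in L^2(\Omega,\mathcal{F}_0,\mathbb{P};\overline{D})$ via $\mathbb{E}\vert u(t;u_0)-u(t;x_0)\vert^2\le e^{-\beta_0 t}\,\mathbb{E}\vert u_0-x_0\vert^2\to 0$ --- which is exactly $c_1$; then $c_2$ follows as in your Step 4. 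This is where your proposal has a genuine gap: you never prove $c_1$. Your Step 3 establishes $u(0;-t,u_0')\to\eta$ in $L^2$ and concludes only convergence \emph{in law} of $u(t;u_0)$, which is strictly weaker than the convergence in $L^2(\Omega,\mathcal{F},\mathbb{P};H)$ asserted in the statement. The identification you yourself flag as ``the main obstacle'' cannot be obtained by the shift argument you invoke: the time shift by $-t$ carries the pair $(u(t;0,u_0),\eta)$ to $(u(0;-t,u_0'),\eta\circ\theta_{-t})$, where $\theta_{-t}$ denotes the shift of the driving noise, and not to $(u(0;-t,u_0'),\eta)$ --- the random variable $\eta$ is itself a functional of the noise and is moved by the shift. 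Only the marginal law of the first component is preserved, so your joint-law claim is false as stated, and from marginal laws alone one cannot upgrade weak convergence to $L^2$ convergence. Within the paper's framework the repair is to quote Proposition \ref{p4.6}-b as stated, for the forward solution (this forward reading is precisely the content the paper extracts from the shifted construction), rather than re-deriving only its time-shifted, backward form.

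Two smaller points. In Step 4 your uniqueness argument presupposes an $\mathcal{F}_0$-measurable representative $u_0'$ of $\mu'$ lying in $L^2$, but an invariant measure carries no a priori second moment; the paper avoids this by fixing a deterministic $x\in\overline{D}$, for which $\mathbb{E}g(u(t;x))\to\mathbb{E}g(\eta)$ for every $g\in C_b(\overline{D})$, and passing to the limit in $\langle\mu_i,g\rangle=\int_{\overline{D}}\mathbb{E}g(u(t;x))\,d\mu_i(x)$ by dominated convergence, the integrand being bounded by the sup-norm of $g$. Also, your Step 2 (independence of $\eta$ from the anchor point $x_0$) is correct but superfluous in the paper's scheme, since the forward contraction from Theorem \ref{t4.4}-a already identifies the limit for arbitrary $u_0$ in one line.
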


\begin{proof}
Let $\left[  x_{0},y_{0}\right]  \in A.$ From Proposition \ref{p4.6} one
follows that $\exists\,\eta\in L^{2}\left(  \Omega,\mathcal{F},\mathbb{P}%
;\overline{D}\right)  $ such that $\exists\lim\limits_{t\rightarrow\infty
}u\left(  t;x_{0}\right)  =\eta$ in $L^{2}\left(  \Omega,\mathcal{F}%
,\mathbb{P};H\right)  $ and from Theorem \ref{t4.4} we have that
$\lim\limits_{t\rightarrow\infty}u\left(  t;u_{0}\right)  =\eta$ in
$L^{2}\left(  \Omega,\mathcal{F},\mathbb{P};H\right)  $ for all $u_{0}\in
L^{2}\left(  \Omega,\mathcal{F}_{0},\mathbb{P};\overline{D}\right)  .$ Let
$\mu=L\left(  \eta\right)  $ the law of $\eta$. Since the convergence in
$L^{2}$ implies the convergence in law then $\lim\limits_{t\rightarrow\infty
}\mathbb{E}g\left(  u\left(  t;u_{0}\right)  \right)  =\mathbb{E}g\left(
\eta\right)  =\int\nolimits_{\overline{D}}g\left(  x\right)  d\mu\left(
x\right)  $ for all $g\in C_{b}\left(  \overline{D}\right)  .$ Hence by
Proposition \ref{p4.5} $\mu$ is an invariant measure. The invariant measure
$\mu$ is unique since if $\mu_{1},\mu_{2}$ are two invariant measure then
$\forall t\geq0:$%
\[
\left\langle \mu_{1},g\right\rangle =\int\nolimits_{\overline{D}}%
\mathbb{E}g\left(  u\left(  t;x\right)  \right)  d\mu_{1}\left(  x\right)
,\text{ and }\left\langle \mu_{2},g\right\rangle =\int\nolimits_{\overline{D}%
}\mathbb{E}g\left(  u\left[  t;x\right)  \right)  d\mu_{2}\left(  x\right)  ,
\]
which implies, as $t\rightarrow\infty,\;\left\langle \mu_{1},g\right\rangle
=\mathbb{E}g\left(  \eta\right)  =\left\langle \mu_{2},g\right\rangle $ for
all $g\in C_{b}\left(  \overline{D}\right)  .$\hfill
\end{proof}

\end{document}